\DeclareMathOperator*{\tr}{\operatorname{tr}}
\newtheorem{theorem}{Theorem}[section]
\newtheorem{lemma}[theorem]{Lemma}
\theoremstyle{remark}
\newtheorem{remark}{Remark}
\definecolor{darkgreen}{rgb}{0.0, 0.55, 0.0}
\let\div\relax
\DeclareMathOperator{\div}{\nabla\cdot}
\DeclareMathOperator{\bdiv}{\boldsymbol{\nabla}\cdot}
\newcommand{\rmdiv}{\rm{div}}
\newcommand{\rmbdiv}{\textbf{div}}
\renewcommand{\O}{\Omega}
\newcommand\Th{{\mathcal T}_h}
\newcommand\R{\mathbb{R}}
\def\decapita#1{}
\def\grecomultibold#1#2{\grecobolddef#1\def\secondobold{#2}%
	\ifx#2\finemultibold\let\next\relax\let\secondobold\relax
	\else\let\next\grecomultibold
	\fi\expandafter\next\secondobold}
\def\grecobolddef#1{%
	\edef\dadef{bf\expandafter\decapita\string#1}%
	\expandafter\def\csname\dadef\endcsname{{\neretto #1}}}
\def\neretto#1{\setbox0=\hbox{\mathsurround=0pt$#1$}%
	\kern.02em\copy0 \kern-\wd0
	\kern-.02em\copy0 \kern-\wd0
	\raise.03em\box0 \kern.02em}
\def\tr{{\rm tr}}
\def\wbox#1;#2;{\vbox{\hrule\hbox{\vrule height#1mm\kern#2mm\vrule
			height#1mm}\hrule}}
\let\phi\varphi
\def\bfzero{{\bf 0}}
\def\xt{{\tilde x}}
\def\yt{{\tilde y}}
\newcommand{\dEl}   {\text{d} E}
\newcommand{\df}    {\text{d} f}
\newcommand{\ds}    {\text{d} s}
\newcommand{\norm}[2][]{\left\lVert{#2}\right\rVert_{#1}}
\newcommand{\seminorm}[2][]{\left\lvert{#2}\right\rvert_{#1}}
\newcommand{\Poly}[3][]{\mathbb{P}^{#1}_{#2}\!\left(#3\right)}
\newcommand{\bs}{\boldsymbol}
\newcommand{\roundPrecision}{2}
\newcommand{\bfb}   {\mathbf{b}}
\newcommand{\bff}   {\mathbf{f}}
\newcommand{\bfg}   {\mathbf{g}}
\newcommand{\bfh}   {\mathbf{h}}
\newcommand{\bfn}   {\mathbf{n}}
\newcommand{\bfp}   {\mathbf{p}}
\newcommand{\bfr}   {\mathbf{r}}
\newcommand{\bft}   {\mathbf{t}}
\newcommand{\bfu}   {\mathbf{u}}
\newcommand{\bfv}   {\mathbf{v}}
\newcommand{\bfw}   {\mathbf{w}}
\newcommand{\bfx}   {\mathbf{x}}
\newcommand{\bfz}   {\mathbf{z}}
\newcommand{\bfA}   {\mathbf{A}}
\newcommand{\bfB}   {\mathbf{B}}
\newcommand{\bfE}   {\mathbf{E}}
\newcommand{\bfH}   {\mathbf{H}}
\newcommand{\bfK}   {\mathbf{K}}
\newcommand{\bfI}   {\mathbf{I}}
\newcommand{\bfL}   {\mathbf{L}}
\newcommand{\bfM}   {\mathbf{M}}
\newcommand{\bfT}   {\mathbf{T}}
\newcommand{\bfU}   {\mathbf{U}}
\newcommand{\bfW}   {\mathbf{W}}
\newcommand{\bbH}   {\mathbb{H}}
\def\RM{\mathbf{RM}}
\def\bxt{{\tilde \bfx}}
\newcommand{\bfsigmah}{\bfsigma_h}
\newcommand{\bftauh}{\bftau_h}
\newcommand{\bfwh}{\bfw_h}
\newcommand{\bfzh}{\bfz_h}
\newcommand{\bfuh}{\bfu_h}
\newcommand{\bfvh}{\bfv_h}
\newcommand{\ph}{p_h}
\newcommand{\qh}{q_h}
\newcommand{\partialt}{\partial_t}
\newcommand{\onehalf}{\frac{1}{2}}
\DeclareMathOperator*{\bdev}{\operatorname{\boldsymbol{\rm dev}}}
\newcommand{\ddt}{\frac{\rm{d}}{\rm{dt}}}
\newcommand{\nablas}{\nabla_{\rm{s}}}
\newcommand{\bfsigmaI}{\bfsigma_{\mathcal{I}}}
\newcommand{\bfuI}{\bfu_{\mathcal{I}}}
\newcommand{\bfwI}{\bfw_{\mathcal{I}}}
\newcommand{\pI}{p_{\mathcal{I}}}
\newcommand{\errsigma}{e^{\bfsigma}}
\newcommand{\erru}{e^{\bfu}}
\newcommand{\errw}{e^{\bfw}}
\newcommand{\errp}{e^{p}}
\newcommand{\errpA}{e^{p}_h}
\newcommand{\errpI}{e^{p}_\mathcal{I}}
\newcommand{\errwA}{e^{\bfw}_h}
\newcommand{\errwI}{e^{\bfw}_\mathcal{I}}
\newcommand{\errsigmaA}{e^{\bfsigma}_h}
\newcommand{\errsigmaI}{e^{\bfsigma}_\mathcal{I}}
\newcommand{\erruA}{e^{\bfu}_h}
\newcommand{\erruI}{e^{\bfu}_\mathcal{I}}
\title{Fully--Mixed Virtual Element Method for the Biot Problem}
\author{Michele Botti\thanks{MOX, Department of Mathematics, Politecnico di Milano, 20133 Milano, Italy (michele.botti@polimi.it, anna.scotti@polimi.it, michele.visinoni@polimi.it)}, Daniele Prada\thanks{Istituto di Matematica Applicata e Tecnologie Informatiche ``Enrico Magenes'' del Consiglio Nazionale delle Ricerche, 27100 Pavia, Italy (daniele.prada@polimi.it)},\;
Anna Scotti\footnotemark[1],\;
Michele Visinoni\footnotemark[1]}
\date{}
\begin{document}
\maketitle
	
\begin{abstract}
\noindent
Poroelasticity describes the interaction of deformation and fluid flow in saturated porous media. A fully-mixed formulation of Biot's poroelasticity problem has the advantage of producing a better approximation of the Darcy velocity and stress field, as well as satisfying local mass and momentum conservation. 
In this work, we focus on a novel four-fields Virtual Element discretization of Biot's equations. The stress symmetry is strongly imposed in the definition of the discrete space, thus avoiding the use of an additional Lagrange multiplier. 
A complete a priori analysis is performed, showing the robustness of the proposed numerical method with respect to limiting material properties. The first order convergence of the lowest-order fully-discrete numerical method, which is obtained by coupling the spatial approximation with the backward Euler time-advancing scheme, is confirmed by a complete 3D numerical validation. A well known poroelasticity benchmark is also considered to assess the robustness properties and computational performance.

\medskip\noindent
\textbf{AMS subject classification}: 65M12; 65M60; 74F10; 76S05.
		
\medskip\noindent
\textbf{Keywords}: Virtual Element method, poromechanics, Biot problem, mixed formulation, polyhedral meshes.
\end{abstract}

\section{Introduction}
\label{sec:intro}
The equations of linear poroelasticity describe the interaction of elastic deformation and fluid flow in fully saturated porous media and find application in many and diverse fields, ranging from geomechanics to the study of biological tissue and industrial products~\cite{cheng2016}. The equations describing fluid flow are the mass conservation equation and the Darcy law, suitably modified to account for the deformation of the solid skeleton and the compressibility of the solid and the fluid phases. The deformation of the porous medium is described by the usual laws of linear elasticity, however, deformation is linked to the effective stress, accounting for fluid pressure. The resulting two-ways coupled system is known as the Biot problem which, despite being linear, can pose some challenges in its numerical solution depending on the physical coefficients~\cite{kreuzer2024}.

The Biot problem has been approximated with methods such as the Finite Element Method (FEM), the Finite Volume Method (FVM), or traditionally a combination of the two, since FVM is usually applied to the Darcy problem, whereas standard FEM is a common choice for elasticity. The cheapest version of the problem is the two-fields formulation, the two unknowns being fluid pressure and solid displacement. 
Instead, in the so-called three-fields formulation, the flow problem is solved in its mixed form, i.e. the Darcy velocity is treated as an unknown and approximated with a suitable space. This formulation has been studied in the scope of FEM~\cite{Oyarzua2016, HU2017143}: in particular, several studies have focused on the choice of the spaces to obtain a cheap and inf-sup stable approximation \cite{rodrigo}, and on the development of suitable preconditioners \cite{Gaspar-Lisbona-Oosterlee-Vabishchevich_2007, adler}.  
In the scope of the Virtual Element Method (VEM), three-fields formulations are discussed in~\cite{Burger2021} and \cite{Tang2021}, whereas a hybrid mimetic finite-difference and virtual element scheme is presented in~\cite{BORIO2021113917}. In this context, the flexibility of the VEM scheme is utilized to approximate displacements in discrete mechanical problems. VEM has also been coupled with FVM~\cite{Coulet-Faille-Girault-Guy-Nataf:2020}, and with a multiscale approach for heterogeneous domains~\cite{SREEKUMAR2021113543}. 

A mixed form of the flow problem, despite having more globally coupled degrees of freedom than a primal formulation, has the advantage of producing a better approximation of the fluid velocity and satisfies local mass conservation. It is far less common to consider a mixed formulation of the mechanical problem, even if a better approximation of the stress field could be beneficial in several applications~\cite{boffi2013, grytz2012}. If we consider the stress tensor as an unknown of the problem we have to enforce its symmetry, with different possible strategies. Symmetry can be enforced weakly by introducing an additional unknown in the system that plays the role of Lagrange multiplier, and thus obtaining the so-called five-fields formulation \cite{afw, lee, Ahmed2019, Zhao2022}. Clearly, this results in a large linear system due to the degrees of freedom associated with the stress, and, on top of that, the Lagrange multipliers. Alternatively, one could use a least-square formulation of Biot problem~\cite{Korsawe2005, Tchonkova2008}, or seek the stress directly in the space of symmetric tensors, obtaining a four-fields formulation of Biot problem \cite{Yi2014}. Unfortunately, with FEM these spaces are quite complicated and expensive to define \cite{Ahmed2019, aw}. Conversely, this can be done with less effort in the scope of VEM, see for instance \cite{Visinoni:2024}. An alternative four-fields formulation using the solid displacement, the fluid pressure, the fluid flux, and the total pressure as unknowns, has been proposed in the framework of FVM and mixed schemes~\cite{Kumar2020}.
  
The goal of this work is to propose and analyze a four-fields formulation of the Biot problem discretized in two and three-dimensions with VEM. The stress is sought for in a space of symmetric tensors and we focus on the lowest possible order. This choice is justified by the low regularity of material parameters in the applications of interest, and it minimizes the number of globally coupled degrees of freedom. The presented analysis does not need a uniformly positive storage coefficient, and the error estimates are robust for nearly incompressible
materials. Moreover, in contrast to \cite{lee,Ahmed2019,Yi2014}, the general case of mixed non-homogeneous boundary conditions is considered. 

The paper is structured as follows. In Section~\ref{sec:model} we recall the four-fields formulation of Biot's poroelasticity equations; in Section~\ref{sec:vem} we present the VEM method, with particular attention to the three dimensional case, and provide a theoretical analysis of the resulting semi-discrete formulation, proving stability and convergence error estimates; in Section~\ref{sec:numerical} we test the method on three dimensional test cases, providing numerical evidence of the theory and of the robustness of the method with respect to the material parameters. The implementation is based on Vem++~\cite{dassi2023vem},  a C++ library specifically developed for working with VEM discretisations.

\section{Model Problem}
\label{sec:model}
We consider Biot's poroelasticity equations, modeling the Darcean flow in a deformable porous medium saturated by a fluid. According to the  poroelasticity theory, the medium is modeled as a continuous superposition of solid and fluid phases. 
Let $\Omega\subset\mathbb{R}^d$, $d\in\{2,3\}$, denote a bounded connected polytopal domain with boundary $\partial\Omega$ and outward normal $\bfn$. For a given time interval $(t_0,t_F)$, 
volumetric load $\bfb:\Omega\times[t_0,t_F)\to\mathbb{R}^d$, 
fluid source $\psi:\Omega\times(t_0,t_F)\to\mathbb{R}$, and
initial fluid content $\eta_0:\Omega\to\mathbb{R}$,
the linear poroelasticity problem consists in finding a 
displacement field $\bfu:\Omega\times\lbrack t_0,t_F)\to\mathbb{R}^d$, stress tensor $\bfsigma:\Omega\times\lbrack t_0,t_F)\to\mathbb{R}^{d\times d}_s$, with $\mathbb{R}^{d\times d}_s$ denoting the space of symmetric $d\times d$ matrices, pore pressure $p:\Omega\times\lbrack t_0,t_F)\to\mathbb{R}$, and Darcy velocity $\bfw:\Omega\times (t_0,t_F)\to\mathbb{R}^d$ satisfying 
\begin{subequations}\label{eq:biot:strong}
  \begin{alignat}{2}
     \label{eq:biot:strong:stressdef}
     &\bfsigma = \mathcal{C}\nablas\bfu -\alpha p \bfI &\qquad&\text{in $\Omega\times \lbrack t_0,t_F)$},\\
     \label{eq:biot:strong:mechanics}
    &-\bdiv \bfsigma = \bfb &\qquad&\text{in $\Omega\times \lbrack t_0,t_F)$},\\
             \label{eq:biot:strong:darcy}
    &\bfw = -\bfK\nabla p&\qquad&\text{in $\Omega\times (t_0,t_F)$},\\
    \label{eq:biot:strong:flow}
    &\partial_t (s_0 p + \alpha\div\bfu) + \div\bfw= \psi &\qquad&\text{in $\Omega\times (t_0,t_F)$},\\
    \label{eq:biot:strong:initial}
    & s_0 p(t_0) + \alpha\div\bfu(t_0) = \eta_0 &\qquad&\text{in $\Omega$}.
  \end{alignat}
\end{subequations}
  
In the constitutive equation for stress \eqref{eq:biot:strong:stressdef}, $\nablas$ denotes the symmetric part of the gradient operator acting on vector-valued fields, $\bfI$ is the identity matrix in $\mathbb{R}^{d\times d}$, $\mathcal{C}:\Omega\to\mathbb{R}^{d^4}$ is the uniformly elliptic fourth-order tensor-valued function expressing the linear stress-strain law, and $\alpha>0$ is the Biot--Willis coefficient. Let the trace operator be defined such that ${\rm tr}(\bftau) \coloneq \sum_i^d \bftau_{ii}$.
For homogeneous isotropic materials, $\mathcal{C}$ can be expressed in terms of the Lam\'e coefficients $\mu:\Omega\to[\underline{\mu},\overline{\mu}]$, with $0<\underline{\mu}<\overline{\mu}$, 
and $\lambda:\Omega\to[0,+\infty)$ as
$$
    \mathcal{C}\bftau = 2\mu\ \bftau + \lambda\ {\rm tr}(\bftau)\bfI
    \quad\text{ for all $\bftau\in \mathbb{R}^{d\times d}_{\rm s}$}.
$$
%
  
In the mass conservation equation~\eqref{eq:biot:strong:flow}, $\partial_t$ denotes the time derivative, and $s_0\ge 0$ is the constrained specific storage coefficient, which measures the amount of fluid that can be forced into the medium by pressure increments due to the compressibility of the structure. The case of a solid matrix with incompressible grains corresponds to the limit value $s_0=0$. The tensor $\bfK:\Omega\to\mathbb{R}^{d\times d}_{\rm s}$ is the uniformly elliptic permeability tensor, already divided by the fluid viscosity for simplicity. For strictly positive real numbers $0<\underline{K}\le\overline{K}$, $\bfK$ satisfies 
\begin{equation*}\label{eq:ass_diff}
    \text{ $\underline{K}|\bfxi|^2\le\bfK(\bfx)\bfxi\cdot\bfxi\le
    \overline{K}|\bfxi|^2$ for almost every $\bfx\in\Omega$ and all $\bfxi\in\mathbb{R}^d$.}
\end{equation*}
The coupling coefficient $\alpha$ is in the range $[0,1]$, where $\alpha=0$ corresponds to decoupling flow and deformation.

Note that by plugging \eqref{eq:biot:strong:stressdef} into \eqref{eq:biot:strong:mechanics} and \eqref{eq:biot:strong:darcy} into \eqref{eq:biot:strong:flow} we can obtain the classical two-fields formulation for the displacement $\bfu$ and pore pressure $p$ unknowns. However, we are interested in a mixed formulation, which can be written by introducing the fourth-order compliance tensor $\mathcal{A}:\Omega\to\mathbb{R}^{d^4}$ defined such that $\mathcal{A}(\mathcal{C}\bftau)=\mathcal{C}(\mathcal{A}\bftau)=\bftau$ for all $\bftau\in\mathbb{R}^{d\times d}$. Its expression in terms of the Lam\'e coefficients is given by
\begin{equation}\label{eq:compliance}
\mathcal{A}\bftau 
  = \frac{1}{2\mu}\left(\bftau- \frac{\lambda}{2\mu + d\lambda} {\rm tr}(\bftau)\bfI\right)
  = \frac{\boldsymbol{\rm dev}(\bftau)}{2\mu} + \frac{{\rm tr}(\bftau)}{d^2 \kappa} \bfI,
\end{equation}
where the deviatoric operator is defined by~$\boldsymbol{\rm dev} (\bftau) = \bftau - d^{-1} \tr(\bftau)\bfI$ and $\kappa= d^{-1}(2\mu+d\lambda)\ge 2d^{-1}\underline{\mu} >0$ denotes the bulk modulus of the porous medium.
Owing to \eqref{eq:biot:strong:stressdef} and the definition of $\mathcal{A}$ in \eqref{eq:compliance}, we have 
$$
\div \bfu = \frac{{\rm tr}(\bfsigma)+d\alpha p}{2\mu + d\lambda} 
= {\rm tr}(\mathcal{A}\bfsigma+\alpha p\mathcal{A}\bfI).
$$
By plugging the previous expression into \eqref{eq:biot:strong:flow} and \eqref{eq:biot:strong:initial}, we rewrite \eqref{eq:biot:strong} as:
\begin{subequations}\label{eq:biot:mixed}
  \begin{alignat}{2}
    \label{eq:biot:mixed_const}
        &\mathcal{A}( \bfsigma + \alpha p \bfI) - \nablas \bfu =  \mathbf{0}  &   &\text{in $\Omega\times \lbrack t_0,t_F)$}, \\
        \label{eq:biot:mixed_bal}
        &- \bdiv \bfsigma =\bfb &   &\text{in $\Omega\times\lbrack t_0,t_F)$},\\
        \label{eq:biot:mixed_darcy}
        &\bfK^{-1}\bfw + \nabla p = \mathbf{0} &   &\text{in $\Omega\times(t_0,t_F)$},\\
        \label{eq:biot:mixed_massb}
        &\partial_t \left(s_0 p + \alpha{\rm tr}(\mathcal{A}(\bfsigma+\alpha p\bfI))\right) +
        \div \bfw = \psi &  \quad &\text{in $\Omega\times(t_0,t_F)$},\\
        \label{eq:biot:mixed:initial}
        & (s_0 +\alpha^2\kappa^{-1})\ p(t_0) + \alpha{\rm tr}(\mathcal{A}\bfsigma(t_0)) = \eta_0 &\qquad&\text{in $\Omega$}.
    \end{alignat}
\end{subequations}

We close the problem by prescribing general boundary conditions. We introduce two partitions $\partial \Omega = \partial_s \Omega \cup \partial_u \Omega$ and $\partial \Omega = \partial_w \Omega\cup \partial_p \Omega$. For simplicity, we assume that the Hausdorff measures of $\partial_u\Omega$ and $\partial_p\Omega$ are strictly positive, i.e., $|\partial_u\Omega|>0$ and $|\partial_p\Omega|>0$ and impose a given traction on $\partial_s\Omega$, a fixed displacement on $\partial_u\Omega$, a given flux on $\partial_w\Omega$, and a pressure on $\partial_p\Omega$, i.e.
\begin{gather}\label{eq:boundaryconds}
    \begin{aligned}
&\bfsigma(\bfx,t)\ \bfn(\bfx) = \bft(\bfx,t) & &\text{on }\partial_s\Omega\times\lbrack t_0,t_F)\\
&\bfu(\bfx,t) = \bfg_u(\bfx,t) & &\text{on }\partial_u\Omega\times\lbrack t_0,t_F)\\
&\bfw(\bfx,t)\cdot\bfn(\bfx) = f(\bfx,t) & &\text{on }\partial_q\Omega\times (t_0,t_F)\\
&p(\bfx,t) = g_p(\bfx,t) & &\text{on }\partial_p\Omega\times(t_0,t_F).
\end{aligned}
\end{gather}

In the mixed formulation of elasticity and poroelasticity problems, the symmetry of the stress tensor $\bfsigma$ has to be explicitly enforced, unlike the primal case where it follows from the presence of the strain operator, i.e. the symmetric gradient.
This constraint can be imposed in a weak sense by means of a Lagrange multiplier, leading to the so called five-fields formulation studied in \cite{lee}.  However, we decide to embed the symmetry constraint directly in the VEM discrete spaces avoiding this extra equation, as done in \cite{Visinoni:2024,Artioli-DeMiranda-Lovadina-Patruno:2017,Artioli-DeMiranda-Lovadina-Patruno:2018, Dassi-Lovadina-Visinoni:2020}.

\subsection{Weak formulation}
First, we introduce some notation. For $X\subseteq\Omega$, the notation $\bfL^2(X)$ is adopted in place of $[L^2(X)]^d$ and $\mathbb L^2(X)$ in place of $[L^2(X)]^{d\times d}$. The scalar product in $L^2(X)$ is denoted by $(\cdot,\cdot)_X$, with associated norm $\|\cdot\|_X$.  
Similarly, the Sobolev spaces $\bfH^s(X)$ are defined as $[H^s(X)]^d$, with $s> 0$, equipped with the norm $\|\cdot\|_{s,X}$. 
In addition, we will use $\bfH(\mathrm{div},X)$ to denote the space of $\bfL^2(X)$ functions with square integrable divergence.
Spaces of tensor fields defined over any $X\subset\overline{\Omega}$ are denoted by special Roman capitals and the subscript $\rm{s}$ is appended to denote the subspace of symmetric tensor fields. 
For example, $\mathbb{L}^2_s(X)$ is the spaces of square-integrable symmetric tensor fields and $\mathbb{H}_s(\rmbdiv,X)$ is the subspace of $\mathbb{L}^2_s(X)$ spanned by tensor fields having rows in $\bfH(\mathrm{div},X)$.

For an integer $m\ge 0$ and a vector space $V$ with scalar product $(\cdot,\cdot)_V$, the space $C^{m}(V)\coloneq C^{m}([t_0,t_F];V)$ is spanned by $V$-valued functions that are $m$-times continuously differentiable in the time interval $[t_0,t_F]$. 
Similarly, the Bochner space $L^2(V)= L^2((t_0,t_F);V)$ is spanned by square-integrable $V$-valued functions of the time interval $(t_0,t_F)$ and, for $s\ge 0$, the Hilbert spaces $H^{s}(V)\coloneq H^{s}((t_0,t_F);V)$ are equipped with the norm $\norm[H^s(V)]{{\cdot}}$ induced by the scalar product
$$
(\varphi,\psi)_{H^{s}(V)}:=\sum_{j=0}^s \int_{t_0}^{t_F} (\partial_t^{j}\varphi(t),\partial_t^{j}\psi(t))_V {\rm d} t
\qquad \forall\varphi,\psi \in H^{s}(V).
$$
In \eqref{problem:variationalFormulation}, the notation $\langle \phi,\xi \rangle_{\Gamma}$, with $\Gamma\subset\partial\Omega$, is used for the duality product between two functions $\phi\in H^{\frac12}(\Gamma)$ and $\xi\in H^{-\frac12}(\Gamma)$ and the functional spaces are defined as follows:
\begin{align*}
\bs{\Sigma} &:= \{\bftau \in \mathbb{H}_s(\rmbdiv,\Omega)\,:\,
\langle\bftau\ \bfn,\bfphi\rangle_{\partial\Omega}=0 \ \ \forall \bfphi\in \bfH^1_{0,\partial_u\Omega}(\Omega)\},\; 
&& \bfU :=\bfL^2(\Omega),\\
\bfW &:= \{\bfz \in \bfH(\mathrm{div},\Omega)\,:\,\langle\bfw\cdot \bfn, \xi\rangle_{\partial\Omega}=0 \ \ \forall \xi\in H^1_{0,\partial_p\Omega}(\Omega)\},\;
&& Q:= L^2(\Omega).
\end{align*}
We endow the spaces $\bfU$ and $Q$ with the usual $L^2$ norm, and the spaces $\bs{\Sigma}$ and $\bfW$ with the norms given by
\begin{equation*}
    \norm[\bs{\Sigma}]{\cdot}^2:=\norm[\Omega]{\cdot}^2 +  \norm[\Omega]{\bdiv (\cdot)}^2 \qquad  \text{and} \qquad
    \norm[\bfW]{\cdot}^2:=\norm[\Omega]{\cdot}^2 +  \norm[\Omega]{\div (\cdot)}^2.
\end{equation*}
For the stress, we have chosen a space of symmetric tensors; thus, there is no need to additionally enforce the symmetry in the problem formulation. 

For simplicity, we assume in what follows that $\bft = \bf 0$ and $f=0$ in \eqref{eq:boundaryconds}. The general case of non-homogeneous Neumann boundary conditions can be obtained by minor modifications introducing $\bfH({\rm div})$-regular lifting of boundary data as in \cite[Appendix A]{Visinoni:25}. 
We also assume the following regularity for the problem data: $\bfb(\cdot,t)\in \bfL^2(\Omega)$, $\psi(\cdot,t)\in L^2(\Omega)$, $\bfg_u(\cdot,t)\in \bfH^{\frac12}(\partial_u\Omega)$, $g_p(\cdot,t)\in H^{\frac12}(\partial_p\Omega)$ for all $t\in (t_0, t_F)$, and $\eta_0\in L^2(\Omega)$.
Starting from the initial boundary value problem \eqref{eq:biot:mixed}, we multiply equations \eqref{eq:biot:mixed_const}, \eqref{eq:biot:mixed_bal}, \eqref{eq:biot:mixed_darcy}, \eqref{eq:biot:mixed_massb} by suitable test functions and, after integration by parts, we obtain the following weak formulation in space: 
find the solution $(\bfsigma, \bfu,\bfw,p)$ such that, for any $t\in (t_0, t_F)$ one has $\bfsigma(t)\in\bs{\Sigma}$, $\bfu(t)\in \bfU$, $\bfw(t)\in \bfW$, and $p(t)\in Q$ satisfying
\begin{subequations}~\label{problem:variationalFormulation}
       \begin{alignat}{2}
\label{problem:variationalFormulation:stress}
            &(\mathcal{A}(\bfsigma + \alpha p \bfI), \bftau)_\Omega + (\bfu,
            \div \bftau)_\Omega  =
            \langle\bfg_u, \bftau\bfn\rangle_{\partial_u \Omega} &&\forall
            \bftau\in \bs{\Sigma}\\
            \label{problem:variationalFormulation:displacement}
            &-(\bdiv \bfsigma, \bfv)_\Omega = (\bfb,
            \bfv)_\Omega && \forall \bfv \in \bfU\\
            \label{problem:variationalFormulation:velocity}
            &(\bfK^{-1} \bfw, \bfz)_\Omega - (p, \div
            \bfz)_\Omega = \langle g_{p}, \bfz \cdot \bfn\rangle_{\partial_{p} \Omega}
            && \forall \bfz \in \bfW\\
            \label{problem:variationalFormulation:pression}
            &(s_0 \partial_t p, q)_\Omega + (\mathcal{A}\partial_t (\bfsigma + \alpha p \bfI), \alpha q \bfI)_\Omega + (\div
            \bfw, q)_\Omega = (\psi, q)_\Omega && \forall q\in Q,
\end{alignat}
together with the initial condition 
$$
\left((s_0 +\alpha^2 \kappa^{-1}) p(t_0) + \alpha{\rm tr}(\mathcal{A} \bfsigma(t_0)), q\right)_\Omega  = 
(\eta_0, q)_\Omega \qquad \forall q\in Q.
$$
\end{subequations}
\begin{remark}[Solutions at $t=t_0$]~\label{rem:variational.initial}
    Assuming that $\bfb\in C^0(\bfL^2(\Omega))$ and $\bfg_u\in \hspace{-0.5mm}C^0(\bfH^{\frac12}(\partial_u\Omega))$, it is possible to prescribe the mechanical equilibrium at the initial time $t=t_0$. Specifically, we define $(\bfsigma(t_0),\bfu(t_0),p(t_0))\in \bs{\Sigma}\times\bfU\times Q$ as the initial stress, displacement, and pressure fields solving the steady problem
$$
        \begin{aligned}
            &(\mathcal{A}(\bfsigma(t_0) + \alpha p(t_0) \bfI), \bftau)_\Omega +(\bfu(t_0),
            \bdiv \bftau)_\Omega \hspace{-0.5mm} = \hspace{-0.5mm}
            \langle\bfg_u(t_0), \bftau\bfn\rangle_{\partial_u \Omega} &&\forall
            \bftau\in \bs{\Sigma}\\
            &-(\bdiv \bfsigma(t_0), \bfv)_\Omega = (\bfb(t_0),
            \bfv)_\Omega && \forall \bfv \in \bfU\\
            &(s_0 p(t_0), q)_\Omega + (\mathcal{A}(\bfsigma(t_0) + \alpha p(t_0) \bfI), \alpha q \bfI)_\Omega = (\eta_0, q)_\Omega && \forall q\in Q,
        \end{aligned}
$$
    which corresponds to a well-posed generalized Stokes problem in mixed form. 
\end{remark}

\section{Virtual Element Discretization}
\label{sec:vem}
In this section, we briefly present the mesh assumptions and the local discrete Virtual Elements spaces. We then introduce the discrete bilinear forms and functionals and discuss their computability. Finally, we present the semi-discrete formulation, along with its stability analysis and convergence error. Even if the model has been presented in the general case, from this point forward we will consider the three-dimensional case.
\subsection{Mesh assumptions}
Let $\{\mathcal{T}_h\}_h$ be a sequence of partitions of $\Omega$ into general polyhedra. We denote the diameter of each element $E$ by $h_E$ and refer to the mesh size of $\mathcal{T}_h$ with $h := \max_{E \in \mathcal{T}_h} h_E$.
We suppose that for all $h$, each element $E$ in $\mathcal{T}_h$ is a contractible polyhedron that fulfills the following mesh assumptions, see~\cite{projectors}:
\begin{itemize}
	\item[A1.] $E$ is star-shaped with respect to a ball $B_E$ of radius $r_E\geq \gamma \, h_E$;
	\item[A2.] every face $f$ of $E$ is star-shaped with respect to a disk $B_f$ of radius $r_f\geq \gamma\, h_f\geq \gamma^2 \, h_E$, where $h_f$ is the diameter of $f$;
	\item[A3.] every edge $e$ of $E$ satisfies $h_e \geq \gamma^2 \, h_E$, where $h_e$ is the lengthxv  of $e$;
\end{itemize}
where $\gamma$ is a suitable uniform positive constant. We observe that the above hypotheses could be relaxed according to~\cite{BLRXX, BrennerSungSmallEdges, CaoChen2018}. Finally, given an element $E\in \Th$ with $n^E_f$ faces $f$, we denote by $\bfx_E$ the barycenter of $E$. For any face $f$, we denote by $\bfx_f$ the barycenter of $f$, by $\bfn_{f}$ the outward normal to $f$, and by $\bft_{f}$ an arbitrary vector tangent to the face.

Henceforth, following the approach in~\cite[Remark 2]{Artioli-DeMiranda-Lovadina-Patruno:2017}, we assume that the Lam\'e parameters $\lambda$ and $\mu$ and the permeability tensor $\bf{K}$ are piecewise constant over any mesh $\Th$. Indeed, since we are focusing on a lowest-order approximation, taking the mean value approximation of these coefficients over each element would not deteriorate the convergence order of the scheme.

\subsection{Discrete spaces}
We present the discrete spaces for the proposed VE scheme. We start with the approximation of the displacement and stress fields based on~\cite{Dassi-Lovadina-Visinoni:2020} and then move to the discretization of the flow terms following~\cite{Brezzi-Falk-Marini:2014}.

%
\paragraph{Space $\RM(E)$}
It is the space of local infinitesimal rigid body motions:
\begin{equation}\label{space:localRM}
\RM(E):=\left\{ \bfr(\bfx) = \bfalpha + \bfomega \wedge \big(\bfx -\bfx_E\big)\ \ \text{ s.t. }\ \bfalpha,\,\bfomega \in\R^3 \right\},
\end{equation}
whose dimension is 6. 

\paragraph{Space $\bfT_h(f)$}
For each face $f\in \partial E$, we introduce
\begin{multline}\label{eq:face_approx}
	\bfT_h(f):=\left\{ \bfpsi(\bxt)=\,\bft_{f} + a\big[\bfn_{f}\wedge(\bfx(\bxt) -\bfx_f)\big]+ p_1(\bxt)\bfn_{f},\right.\\
 \left.\text{s.t.}\ a\in \R,\ p_1(\bxt)\in\Poly{1}{f} \right\},
\end{multline}
where $\bfx(\bxt)$ is the position vector of a point on $f$, determined by the two local coordinates $\bxt$.
The dimension of such a space is $6$, indeed:
\begin{itemize}
	\item The tangent vector $\bft_f$ is determined by a linear combination of given linearly independent tangential vectors $\bft_1$ and $\bft_2$, i.e. $\bft_f=b_1\bft_1+b_2\bft_2$.
	\item The rotation $a\big[\bfn_{f}\wedge(\bfx(\bxt) -\bfx_f)\big]$ is determined by a scalar value $a\in\R$.
	\item The polynomial $p_1(\bxt)\in\Poly{1}{f}$ is bivariate with respect to the local face coordinate system, so it is determined by three parameters:
	\begin{equation*}
		p_1(\bxt)=c_1+c_2\left(\frac{\xt-\xt_f}{h_f}\right)+c_3\left(\frac{\yt-\yt_f}{h_f}\right).
	\end{equation*}
\end{itemize}
\begin{figure}[ht]
    \centering
            \includegraphics[width=0.24\textwidth]{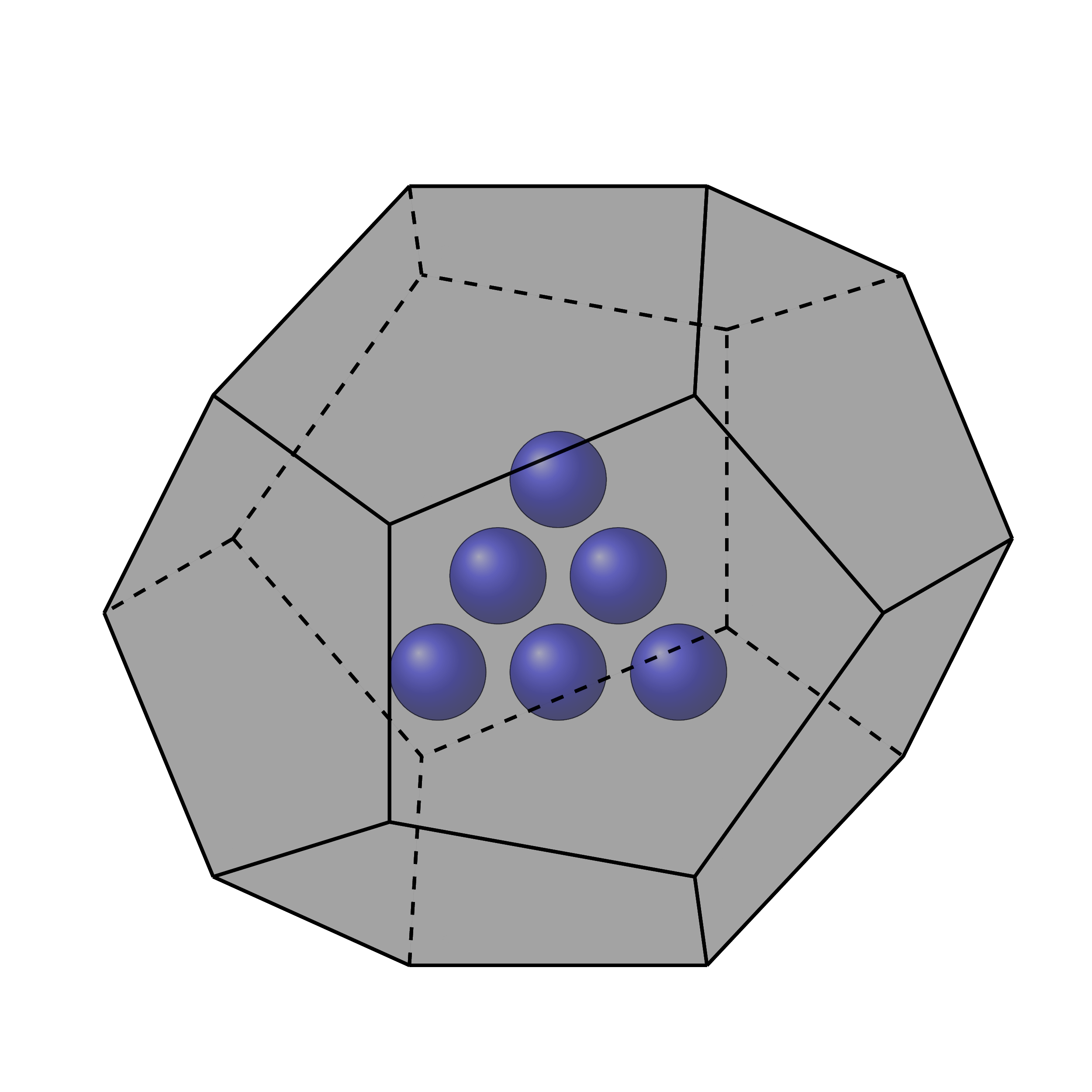}
        \includegraphics[width=0.24\textwidth]{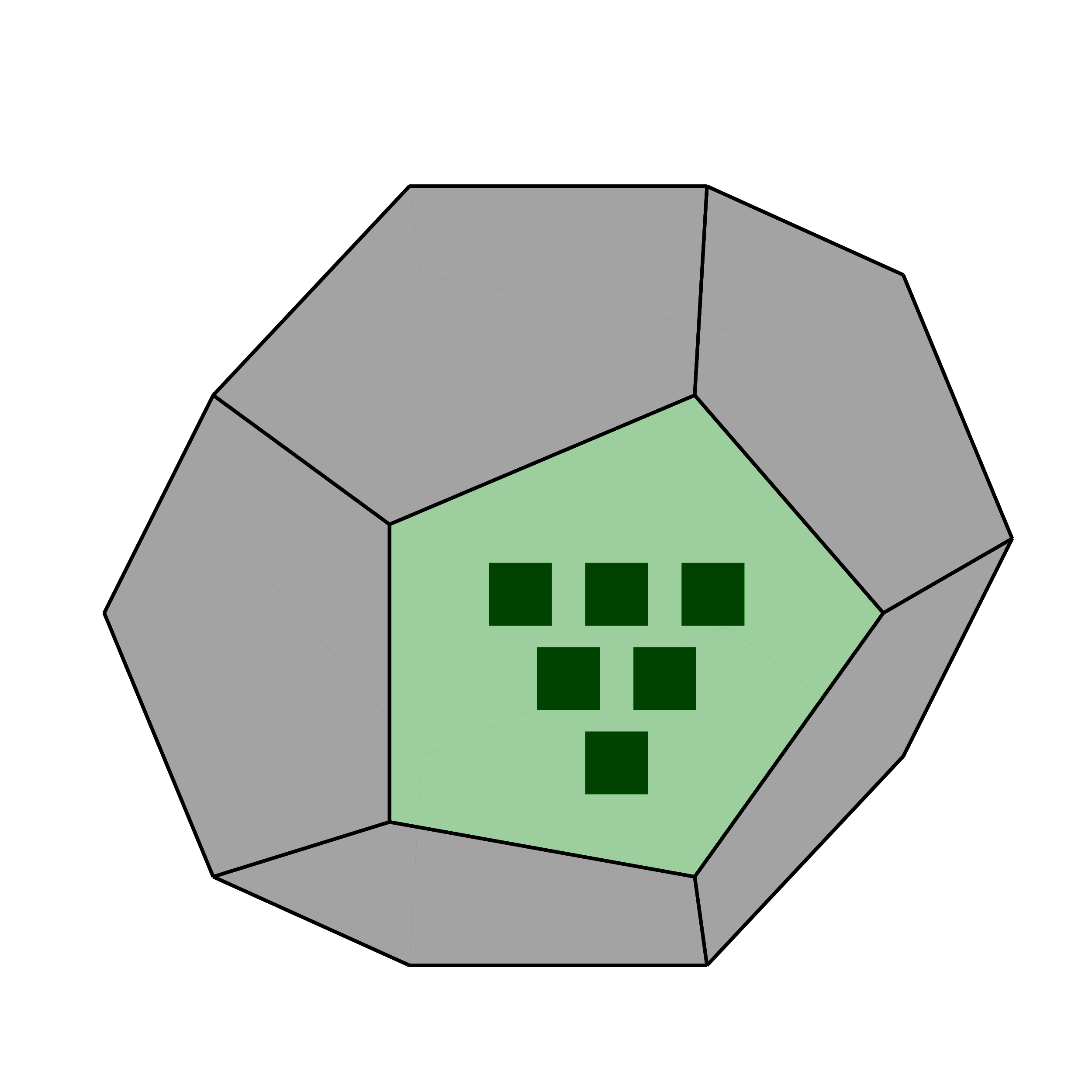}
            \includegraphics[width=0.24\textwidth]{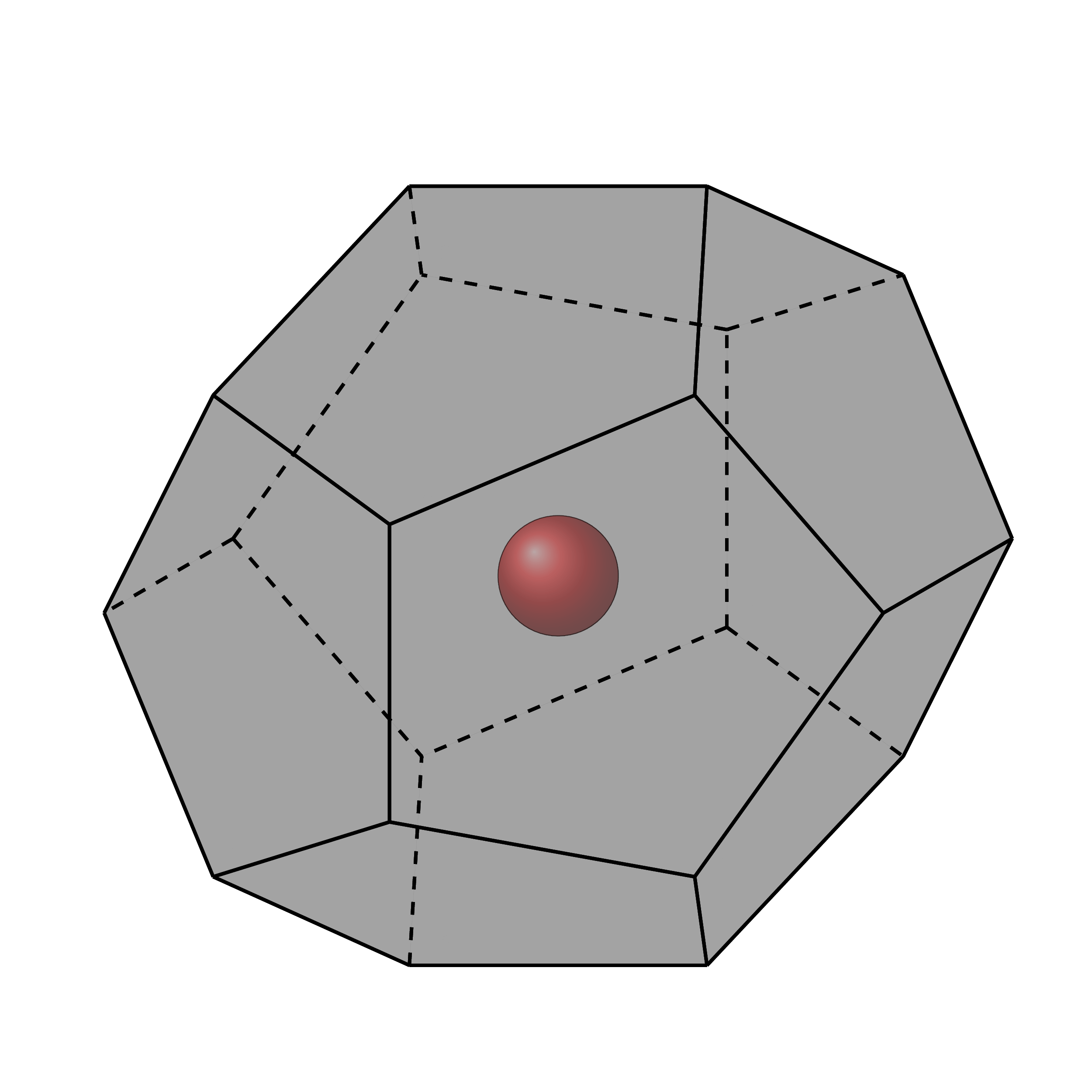}
        \includegraphics[width=0.24\textwidth]{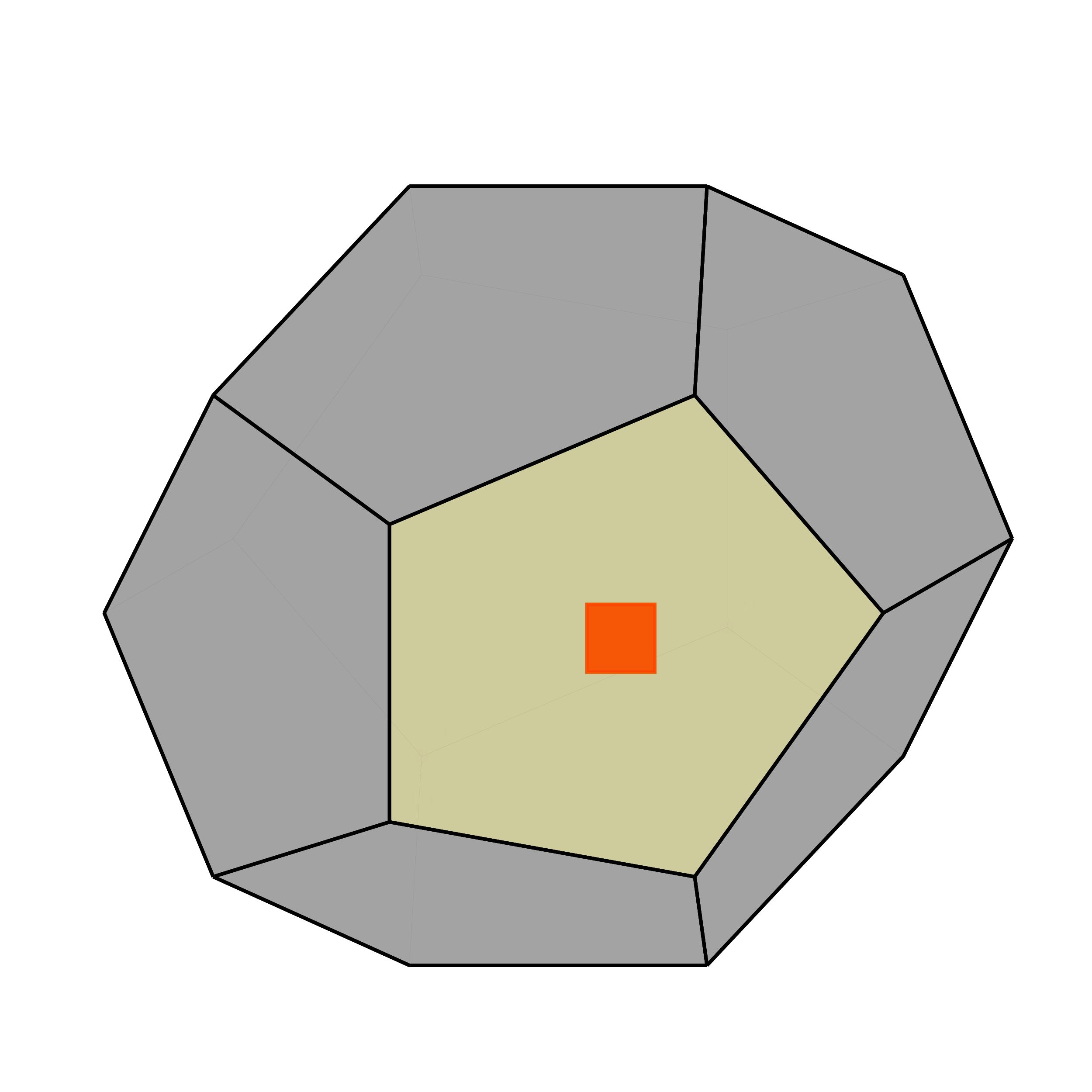}
        \caption{From left to right, the degrees of freedom are denoted as: blue sphere for displacement, green square for stress, red sphere for pressure, and orange square for velocity.}
    \label{fig:dofs}
\end{figure}
\paragraph{Stress space} We introduce our local approximation space for the stress field:
\begin{equation*} 
	\begin{aligned}
		\bs{\Sigma}_h(E):=\Big\{ \bftau_h\in & \ \mathbb{H}_{\rm s}(\rmbdiv;E) : \exists\, \bfw^\ast\in \bfH^1(E) \mbox{ such that } \bftau_h=\mathcal{C}\nablas\bfw^\ast;\\ 
		&(\bftau_h\,\bfn)_{|f}\in \bfT_h(f) \quad \forall f\in \partial E;\quad \bdiv\bftau_h\in \RM(E) \Big\}. 
	\end{aligned}
\end{equation*}
Accordingly, for the local space $\bs{\Sigma}_h(E)$, for each face $f$ the following unisolvent set of degrees of freedom (DOFs) can be selected:

\begin{itemize}
	
	\item three DOFs fixing the tangential components of the traction on $f$:	
	\begin{equation}\label{stress_localdofs}
  \frac{1}{|f|}\int_f (\bftau_h\,\bfn)_{|f} \cdot \Big[ \bftheta_{f} + \alpha \big[\bfn_{f}\wedge(\bfx(\bxt) -\bfx_f)\big]\Big]~\df, 
	\end{equation} 
	where $\alpha\in\R$ and $\bftheta_f$ is an arbitrary vector tangential to $f$.
	
	\item three DOFs fixing the normal component of the traction on $f$:	
	\begin{equation}\label{stress_localdofs2}
  \frac{1}{|f|}\int_f (\bftau_h\,\bfn)_{|f} \cdot \left[q_1(\bxt)\bfn_{f}\right]~\df \qquad \forall\, q_1(\bxt)\in\Poly{1}{f}. 
	\end{equation} 
\end{itemize}
The divergence can be computed using the information on the boundary, see~\cite[Proposition 3.1]{Dassi-Lovadina-Visinoni:2020}, so the dimension of this space $6n_f^E$, see Figure~\ref{fig:dofs} for a graphical representation of the DOFs.
\paragraph{Displacement space}
For the displacement field, we introduce 
\begin{equation*}
\bfU_h(E)=\left\{ \bfv_h\in  \bfL^2(E)\ :\ \bfv_h\in \RM(E) \right\}.
\end{equation*}
Accordingly, for the local space $\bfU_h(E)$ the following six DOFs can be taken: 
\begin{equation}~\label{displacement_localdofs}
\frac{1}{|E|}\int_E \bfv_h \cdot \bfr~\dEl, \quad \forall \bfr \in \RM(E).
\end{equation}

For the flow variables, we consider the lowest order virtual Raviart-Thomas element introduced in~\cite[Remark 6.3, Remark 6.6]{Brezzi-Falk-Marini:2014}. 
\paragraph{Velocity space} We define the flow velocity space as follows:
\begin{equation*}
\begin{aligned}
\bfW_h(E):=\left\{ \bfz_h \in \bfH(\rmdiv;E)  :\ \exists\, \phi^\star\in H^1(E)
 \mbox{ such that } \bfz_h = \nabla \phi^\star;\right.\\ 
 \left.
 (\bfz_h\cdot\bfn)_{|f}\in\Poly{0}{f}
 \quad \forall f\in \partial E;\quad \nabla\cdot\bfz_h\in \Poly{0}{E} \right\}.
\end{aligned}
\end{equation*}
Accordingly, for the local space $\bfW_h(E)$ the following DOFs can be taken:
\begin{equation}~\label{velocity_localdofs}
 \frac{1}{|f|}\int_f (\bfz_h\cdot \bfn)_{|f}~\df =(\bfz_h\cdot \bfn)_f.
\end{equation}
We remark that, once $(\bfz_h\cdot \bfn)_f = c_f\in\Poly{0}{E}$ is given for all $f\in\partial E$, the value of $\div \bfz_h\in\Poly{0}{E}$ is uniquely determined as follows
\begin{equation*}
\div \bfz_h = \frac{1}{|E|}\int_E \div\bfz_h~\dEl =  \frac{1}{|E|}\sum_{f\in\partial E}\int_f  (\bfz_h\cdot \bfn)_{|f}~\df =  \frac{1}{|E|}\sum_{f\in\partial E} |f| c_f.
\end{equation*}
Hence, the dimension of this space is  $n_f^E$, as depicted in Figure~\ref{fig:dofs}.
\paragraph{Pressure space} We define 
$
	Q_h(E) :=\left\{ p_h \in L^2(E) \ \colon \ p_h\in\Poly{0}{E} \right\}.
$
Thus, the single DOF for the pressure space can be taken as
\begin{equation}~\label{pressure_localdofs}
 \frac{1}{|E|} \int_E p_h~\dEl.
\end{equation}

We define the global counterparts of the local virtual element spaces introduced above as follows:
\begin{equation*} 
	\begin{aligned}
		\bs{\Sigma}_h&:=\left\{ \bftau_h\in  \bs{\Sigma}\ \colon \ \bftau_h|_E\in \bs{\Sigma}_h(E) \ \ \forall E \in \Th \right\}, \\  
		\bfU_h&:=\left\{ \bfv_h\in  \bfU\ \colon \ \bfv_h|_E\in \bfU_h(E) \ \ \forall E \in \Th\right\},\\
		\bfW_h&:=\left\{ \bfz_h \in \bfW \ \colon \ \bfz_h|_E \in \bfW_h(E) \ \ \forall E \in \Th \right\},\\
		Q_h &:=\left\{ p_h \in Q \ \colon \ p_h|_E\in Q_h(E) \ \ \forall E \in \Th \right\}.
	\end{aligned}
\end{equation*}
\subsection{The local forms}
In this section, we discuss the discretization of the bilinear and linear forms appearing in \eqref{problem:variationalFormulation}. These discrete forms are computed elementwise. 
We start with the mixed terms: for every $ \bfvh \in \bfU_h$ and $\bftauh \in \bs{\Sigma}_h$, the term 
\begin{equation*}
(\bfvh,\bdiv \bftauh)_{\O} =  \sum_{E\in\Th} (\bfvh,\bdiv \bftauh)_{E} 
\end{equation*}
is computable via the degrees of freedom, since both factors are polynomials (see~\eqref{stress_localdofs}, \eqref{stress_localdofs2}, and \eqref{displacement_localdofs}). Similarly, for each $\qh \in Q_h$ and $\bfzh \in \bfW_h$,  the mixed term related to the Darcy problem
\begin{equation}
(\qh,\nabla\cdot \bfzh)_{\O} = \sum_{E\in\Th} (\qh,\nabla\cdot \bfzh)_{E}
\end{equation}
is just computed using the degrees of freedom in~\eqref{velocity_localdofs} and~\eqref{pressure_localdofs}.
Moreover, recalling the definition of the compliance tensor in \eqref{eq:compliance}, for every $\ph, \qh \in Q_h$ the mass term can be rewritten as follows
\[
	(\alpha \ph \mathcal{A} \bfI, \alpha \qh \bfI)_{\O}  
		= \sum_{E\in\Th}\left(\frac{\alpha^2}{3\kappa} \ph\bfI, \qh \bfI\right)_{E}
		= \sum_{E\in\Th} \left( \alpha^2\kappa^{-1}\ph  ,\qh\right)_{E},
\]
where we recall that for $d=3$ we have $3\kappa = (2\mu+3\lambda)$. The previous term as well as
$
\left(s_0 \ph,\qh \right)_{\Omega}
$
are computable since the pressure is elementwise constant.

The elasticity bilinear form
$
(\mathcal{A}\bfsigmah, \bftauh)_{\O} =\sum_{E\in\Th} (\mathcal{A}\bfsigmah, \bftauh)_{E} 
$
for $\bfsigmah, \bftauh \in \bs{\Sigma}_h$
is not directly computable from the DOFs since both entries are virtual tensor fields. Following~\cite{Dassi-Lovadina-Visinoni:2020}, we introduce the approximation
\begin{equation*}
a_h^E(\bfsigmah, \bftauh) = (\mathcal{A}\,  \bs{\Pi}_{\rm s}^{E}\bfsigmah, \bs{\Pi}_{\rm s}^{E}\bftauh )_E + s_1^E((\bfI-\bs{\Pi}_{\rm s}^{E})\bfsigmah,(\bfI-\bs{\Pi}_{\rm s}^{E})\bftauh)
\quad\forall \bfsigmah, \bftauh \in \bs{\Sigma}_h,
\end{equation*}  
where the $L^2$-projection operator $\bs{\Pi}_{\rm s}^E:\bs{\Sigma}_h(E)\rightarrow \Poly{0}{E}^{3\times 3}_{\rm s}$ is given by 
\begin{equation}\label{eq:projection:elasticity}
	\int_E \bs{\Pi}^E_{\rm s} \bftauh : \bfpi_0~\dEl = 	\int_E \bftauh : \bfpi_0~\dEl \qquad \forall \bfpi_0\in \Poly{0}{E}^{3\times 3}_{\rm s}
\end{equation}
As it was shown in~\cite{Dassi-Lovadina-Visinoni:2020}, the DOFs~\eqref{stress_localdofs} and~\eqref{stress_localdofs2} allow the explicit computation of the projection $\bs{\Pi}_{\rm s}^E$. Indeed, we notice that each $\bfpi_0\in \Poly{0}{E}^{3\times 3}_{\rm s}$ can be written as the symmetric gradient of a $\bfp_1\in \Poly{1}{E}^{3}$, i.e. $\bfpi_0 = \nablas \bfp_1$.  Hence, using the divergence theorem, the right-hand side of~\eqref{eq:projection:elasticity} becomes
\begin{equation}\label{eq:projection:elasticity_rhs}
\int_E \bftauh : \bfpi_0~\dEl 
= -\int_E (\bdiv \bftauh) \cdot \bfp_1~\dEl
+ \int_{\partial E} \bftauh\ \bfn \cdot \bfp_1~\df
\end{equation}
which is computable from the DOFs. The stabilization term is defined as
\begin{equation}\label{eq:stabilization:elasticity}
s_1^E(\bfsigmah,\bftauh) = \xi_{1,E} h_E \sum_{f\subset\partial E} \int_f (\bfsigmah\,\bfn)\cdot( \bftauh\,\bfn)~\df\qquad\forall \bfsigmah, \bftauh \in \bs{\Sigma}_h,
\end{equation}
where $\xi_{1,E}$ is a positive constant to be chosen according to tensor $\mathcal{A}$. In the numerical experiments, we choose $\xi_{1,E}=\frac{1}{2}\tr(\mathcal{A}_{|E})$.

Similarly, the Darcy bilinear form
$(\bfK^{-1} \bfwh,\bfzh)_{\O}  =
\sum_{E\in\Th} (\bfK^{-1} \bfwh,\bfzh)_{E}$
for $\bfwh,\bfzh \in \bfW_h$ is not computable, thus we define the local form 
\begin{equation*}
	c_h^E(\bfwh,\bfzh) = (\bfK^{-1} \bs{\Pi}^{E}\bfwh, \bs{\Pi}^{E}\bfzh)_{E} + s_2^E((\bfI-\bs{\Pi}^{E})\bfwh,(\bfI-\bs{\Pi}^{E})\bfzh). 
\end{equation*}
Here, we use the $L^2$-projection operator  $\bs{\Pi}^{E}:\bfW_h(E)\rightarrow \Poly{0}{E}^3$ defined as
\begin{equation}\label{eq:projection.velocity}
\int_E \bs{\Pi}^{E}\bfzh \cdot \bfp_0~\dEl = \int_E \bfzh\cdot \bfp_0~\dEl \quad \quad \forall \bfp_0 \in  \Poly{0}{E}^3
\end{equation}
and stabilization bilinear form such that
\begin{equation*}
	s_2^E(\bfwh,\bfzh) := \xi_{2,E} h_E \sum_{f\subset \partial E} \int_f (\bfwh\cdot\bfn)(\bfzh\cdot\bfn)~\df,
\end{equation*}
with $\xi_{2,E}=\frac{1}{2}\tr(\bfK^{-1}_{|E})$. The computability of the projection in \eqref{eq:projection.velocity} can be inferred proceeding as in \eqref{eq:projection:elasticity_rhs}.
\begin{remark}~\label{remark:stability_stabterms}
For all $\bftauh\in\bs{\Sigma}_h(E)$, $\bfzh\in\bfW_h(E)$, the stabilizing terms $s_1(\cdot,\cdot)$ and $s_2(\cdot,\cdot)$ satisfy the following relations (see, e.g., \cite[Proposition 3.1]{Visinoni:25})
\begin{equation}~\label{eq:stability_estimates_stab}
    \begin{aligned}
        0\le \gamma_{1,*} \norm[E]{\mu^{-\onehalf}\bftauh}^2 \leq & \ s_1^E(\bftauh,\bftauh) \leq \gamma_1^* \norm[E]{\mu^{-\onehalf}\bftauh}^2
        \\
        0\le \gamma_{2,*} \norm[E]{\bfK^{-\onehalf}\bfzh}^2\leq & \ s_2^E(\bfzh,\bfzh) \leq \gamma_2^*\norm[E]{\bfK^{-\onehalf}\bfzh}^2
    \end{aligned}
\end{equation}
where the constants $\gamma_{1,*}, \gamma_1^*,\gamma_{2,*}$ and $\gamma_2^*$ depend only on the shape regularity. 
\end{remark}

Finally, for the coupling term, for each  $\qh \in Q_h$  and $\bftauh \in \bs{\Sigma}_h$ we have
\begin{equation}~\label{eq:coupling_term_discrete}
	(\alpha \qh\mathcal{A}  \bfI, \bftauh)_{\O} 
	= \sum_{E\in\Th} \left(\frac{\alpha}{3\kappa} \qh\bfI,\bs{\Pi}_{\rm s}^E\bftauh\right)_{E}
	= \sum_{E\in\Th} \left(\alpha \qh, \frac{\tr(\bs{\Pi}_{\rm s}^E\bftauh)}{3\kappa}\right)_{E}
\end{equation}
which is computable since $\qh$ is piecewise constant by using  \eqref{eq:projection:elasticity_rhs}.
The right-hand-side terms are computable (see~\eqref{stress_localdofs},~\eqref{stress_localdofs2},~\eqref{displacement_localdofs},~\eqref{velocity_localdofs}, and~\eqref{pressure_localdofs}) by suitable quadrature rules on polytopal elements, see for instance~\cite{Mousavi-Sukumar:2011,Chin-Dassi-Manzini-Sukumar:2024,Sommariva-Vianello:2024}:
\begin{equation}
	\begin{aligned}
        & \langle \bfg_u, \bftauh\, \bfn\rangle_{\partial_u\Omega} = \sum_{f\subset\partial _u\Omega} (\bfg_u, \bftauh\, \bfn)_f
        &\quad &\forall \bftauh \in \bs{\Sigma}_h,\\
        &(\bfb, \bfvh)_{\O} =\sum_{E\in\Th} (\bfb, \bfvh)_{E} & \quad &\forall \bfvh \in \bfU_h, \\
        &\langle g_p, \bfzh\cdot \bfn\rangle_{\partial_p\Omega} =\sum_{f\subset\partial_p\Omega}  (g_p, \bfzh\cdot \bfn)_f
        &\quad &\forall \bfz \in \bfW_h,\\
		&(\psi, \qh)_{\O} =\sum_{E\in\Th} (\psi, \qh)_{E} & \quad &\forall \qh \in Q_h. 	
	\end{aligned}
\end{equation}
To conclude, in view of Remark~\ref{remark:stability_stabterms}, for all $\bfsigmah,\bftauh\in\bs{\Sigma}_h$ and $\bfwh,\bfzh\in\bfW_h$ the discrete bilinear forms
\begin{equation*}
    a_h(\bfsigmah,\bftauh) = \sum_{E\in\Th} a_h^E(\bfsigmah, \bftauh) \quad \mbox{and} \quad  c_h(\bfwh, \bfzh) = \sum_{E\in\Th} c_h^E(\bfwh, \bfzh),
\end{equation*}
are coercive and bounded in the sense that: for each $E\in\Th$
\begin{equation}\label{eq:coercivity_continuity_bilinearforms}
    \begin{aligned}
        &a_h^E(\bftauh,\bftauh) \geq \min\{1,\gamma_{1,*}\}\norm[E]{\mu^{-\onehalf}\bftauh}^2 = C_{s,*}\norm[E]{\mu^{-\onehalf}\bftauh}^2, \\
        &a_h^E(\bfsigmah,\bftauh) \leq \max \{1,\gamma_1^*\}\norm[E]{\mu^{-\onehalf}\bfsigmah}\norm[E]{\mu^{-\onehalf}\bftauh} 
        , \\
          &c_h^E(\bfzh,\bfzh) \geq \min\{1,\gamma_{2,*}\}\norm[E]{\bfK^{-\onehalf}\bfzh}^2 = C_{v,*}\norm[E]{\bfK^{-\onehalf}\bfzh}^2, \\
        & c_h^E(\bfwh,\bfzh) \leq \max \{1,\gamma_2^*\} \norm[E]{\bfK^{-\onehalf}\bfwh} \norm[E]{\bfK^{-\onehalf}\bfzh}. 
    \end{aligned}
\end{equation} 

\subsection{Global Semi--Discrete Formulation}

The semi-discrete virtual element formulation of Problem~\eqref{problem:variationalFormulation} reads as follows:  find $\bfsigmah \in H^1(\bs{\Sigma}_h)$, $ \bfuh\in H^1(\bfU_h)$, 
$\bfwh\in L^2(\bfW_h)$, $\ph \in H^1(Q_h)$ such that, for all $t\in (t_0,t_F]$ and $\bftauh \in \bs{\Sigma}_h$, $\bfvh \in \bfU_h$, $\bfzh \in \bfW_h$, $\qh \in Q_h$, it holds
\begin{subequations}\label{problem:semidiscrete_formulation}
\begin{alignat}{2}
\label{problem:semidiscrete_formulation:stress}
&a_h(\bfsigmah, \bftauh)
+
\left(
\bfuh,\bdiv \bftauh
\right)_\Omega 
+ 
\left(
\alpha\mathcal{A}\bfI \ph , \bftauh
\right)_\Omega 
= 
\langle 
\bfg_u, \bftauh\bfn
\rangle_{\partial_u \Omega} \\
\label{problem:semidiscrete_formulation:displacement}
&-\left(
\bdiv \bfsigmah, \bfvh
\right)_\Omega 
= (\bfb,\bfvh)_\Omega\\
\label{problem:semidiscrete_formulation:velocity}
&c_h(\bfwh,\bfzh)
- \left(\ph, \div\bfzh\right)_\Omega 
= 
\langle g_{p}, \bfzh \cdot \bfn\rangle_{\partial_{p} \Omega}\\
\label{problem:semidiscrete_formulation:pression}
&\left(\partialt \bfsigmah , \alpha \mathcal{A}\bfI\qh \right)_\Omega + (\div\bfwh, \qh)_\Omega +\left((s_0+\alpha^2\kappa^{-1})\partialt \ph, \qh\right)_\Omega = (\psi, \qh)_\Omega.
\end{alignat}
\end{subequations}
According to Remark \ref{rem:variational.initial}, assuming sufficient regularity, the initial solutions $\bfsigmah(t_0) \in \bs{\Sigma}_h$, $ \bfuh(t_0)\in \bfU_h$, and $\ph(t_0)\in Q_h$ can be computed such that
\begin{equation}\label{eq:discrete_initial}
\begin{aligned}
&\hspace{-1mm}a_h(\bfsigmah(t_0), \bftauh)
+
\left(
\bfuh(t_0),\bdiv \bftauh
\right)_\Omega 
+ 
\left(
\alpha\mathcal{A}\bfI \ph(t_0) , \bftauh
\right)_\Omega 
= 
\langle 
\bfg_u(t_0), \bftauh\bfn
\rangle_{\partial_u \Omega} \\
&\hspace{-1mm}-\left(
\bdiv \bfsigmah(t_0), \bfvh
\right)_\Omega 
= (\bfb(t_0),\bfvh)_\Omega\\
&\hspace{-1mm}\left(\bfsigmah(t_0) , \alpha \mathcal{A}\bfI\qh \right)_\Omega+\left((s_0+\alpha^2\kappa^{-1}) \ph(t_0), \qh\right)_\Omega = (\eta_0, \qh)_\Omega.
\end{aligned}
\end{equation}

Existence and uniqueness of solutions of the semidiscrete problem \eqref{problem:semidiscrete_formulation} can be established in the framework of differential algebraic equations as in \cite[Section III.A]{Yi2014}. The remaining part of the section is devoted to the derivation of suitable stability and error estimates. 
\begin{theorem}[Stability]~\label{theorem:stability}
Let $(\bfsigmah(t),\bfuh(t),\bfwh(t),\ph(t))$
solve~\eqref{problem:semidiscrete_formulation} for each $t\in(t_0,t_f]$ and assume the problem data to be regular enough to define 
\begin{equation}\label{eq:stab.rhs}
\begin{aligned}
\mathcal{C}_0(\bfb, \bfg_u, g_p, \eta_0) &= 
\norm[\Omega]{\bfb(t_0)}^2 +
\norm[\bfH^{\frac12}(\partial_u\Omega)]{\bfg_u(t_0)}^2 +
\norm[H^{\frac12}(\partial_p\Omega)]{g_p(t_0)}^2 + 
\norm[\Omega]{\eta_0}^2,
\\
\mathcal{C}(\bfb, \psi, \bfg_u, g_p) &= 
\hspace{-1mm}\norm[H^1(\bfL^2(\Omega))]{\bfb}^2 \hspace{-1mm}+ \hspace{-1mm}\norm[L^2(L^2(\Omega))]{\psi}^2 \hspace{-1mm}+\hspace{-1mm}
\norm[H^1(\bfH^{\frac12}(\partial_u\Omega))]{\bfg_u}^2 \hspace{-1mm}+\hspace{-1mm}
\norm[H^1(H^{\frac12}(\partial_p\Omega))]{g_p}^2\hspace{-1mm}.
\end{aligned}
\end{equation}
Then, there exists a constant $C>0$ independent of $s_0$, $\lambda$, the length of the time interval $(t_f-t_0)$, and the mesh size $h$, such that
\begin{equation}~\label{eq:stability_theorem}
\begin{aligned}
&\sup_{t\in[t_0,t_f]}\norm[\Omega]{\bfuh(t)}^2 +
\sup_{t\in[t_0,t_f]}\norm[\boldsymbol{\Sigma}]{(2\mu)^{-\frac12}\bfsigmah(t)}^2 +
\int_{t_0}^{t_f}\norm[\Omega]{\bs{K}^{-\frac12}\bfwh(s)}^2\ds +\\
&\qquad
\int_{t_0}^{t_f}\norm[\Omega]{\ph(s)}^2 \ds\leq 
C\big((t_f-t_0)\ \mathcal{C}(\bfb, \psi, \bfg_u, g_p) +
\mathcal{C}_0(\bfb, \bfg_u, g_p, \eta_0)\big).     
\end{aligned}
\end{equation}
The estimate remains valid in the incompressible limits $s_0\rightarrow 0$ and $\lambda\rightarrow\infty$.
\end{theorem}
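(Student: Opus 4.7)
The plan is to derive a combined energy identity involving $\bfsigmah$, $\ph$, and $\bfwh$, then close the estimate using the discrete inf-sup conditions for the two mixed substructures. First, I would differentiate \eqref{problem:semidiscrete_formulation:stress} in time and test with $\bftauh=\bfsigmah$, producing a term $\tfrac{1}{2}\tfrac{d}{dt}a_h(\bfsigmah,\bfsigmah)$, a mixed term $(\partial_t\bfuh,\bdiv\bfsigmah)$, and a coupling term $(\alpha\mathcal{A}\bfI\partial_t\ph,\bfsigmah)$. Testing \eqref{problem:semidiscrete_formulation:displacement} (or rather its admissibility for any $\bfvh$) with $\bfvh=\partial_t\bfuh$ eliminates the mixed term in favour of $(\bfb,\partial_t\bfuh)$. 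Separately, testing \eqref{problem:semidiscrete_formulation:velocity} with $\bfzh=\bfwh$ and \eqref{problem:semidiscrete_formulation:pression} with $\qh=\ph$ cancels the $(\div\bfwh,\ph)$ coupling and gives $c_h(\bfwh,\bfwh)$ plus the second coupling term $(\partial_t\bfsigmah,\alpha\mathcal{A}\bfI\ph)$. The two coupling terms sum to the total time derivative $\frac{d}{dt}(\alpha\mathcal{A}\bfI\ph,\bfsigmah)$, and together with $\frac{1}{2}\frac{d}{dt}\|(s_0+\alpha^2\kappa^{-1})^{1/2}\ph\|_\Omega^2$ one obtains an identity of the form
\begin{equation*}
\tfrac{1}{2}\tfrac{d}{dt}\mathcal{E}_h(\bfsigmah,\ph) + c_h(\bfwh,\bfwh) = \langle\partial_t\bfg_u,\bfsigmah\bfn\rangle_{\partial_u\Omega} + \langle g_p,\bfwh\cdot\bfn\rangle_{\partial_p\Omega} + (\psi,\ph)_\Omega - (\bfb,\partial_t\bfuh)_\Omega,
\end{equation*}
where $\mathcal{E}_h(\bfsigmah,\ph) = a_h(\bfsigmah,\bfsigmah) + 2(\alpha\mathcal{A}\bfI\ph,\bfsigmah) + (s_0+\alpha^2\kappa^{-1})\|\ph\|_\Omega^2$. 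The key algebraic observation is that, up to stabilization, $\mathcal{E}_h$ equals $(\mathcal{A}(\bfsigmah+\alpha\ph\bfI),\bfsigmah+\alpha\ph\bfI)_\Omega + s_0\|\ph\|_\Omega^2$, so it is positive semi-definite independently of $s_0$ and $\lambda$, and the coercivity provided by \eqref{eq:coercivity_continuity_bilinearforms} furnishes control of $\|(2\mu)^{-1/2}\bdev\bfsigmah\|_\Omega^2$ uniformly in the limits.

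Next I would integrate in time from $t_0$ to any $t\in(t_0,t_f]$ and handle the right-hand side. The troublesome term $\int_{t_0}^t(\bfb,\partial_t\bfuh)_\Omega\,ds$ is treated by integration by parts in time, producing $(\bfb(t),\bfuh(t))_\Omega$, $(\bfb(t_0),\bfuh(t_0))_\Omega$, and $\int(\partial_t\bfb,\bfuh)_\Omega\,ds$; the boundary terms are estimated by standard trace-duality pairings in $H^{\pm 1/2}$. All quantities are then absorbed using Young's inequality with appropriate weights. The divergence contribution to $\|(2\mu)^{-1/2}\bfsigmah\|_{\bs{\Sigma}}^2$ is controlled directly from \eqref{problem:semidiscrete_formulation:displacement}, which yields $\bdiv\bfsigmah=-\bfb$ pointwise-in-time. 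The initial value $\mathcal{E}_h(\bfsigmah(t_0),\ph(t_0))$ is bounded by $\mathcal{C}_0$ through the well-posedness of the Stokes-type problem \eqref{eq:discrete_initial}.

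Finally, to recover the $L^\infty$-in-time bound on $\bfuh$ and the $L^2$-in-time bound on $\ph$ I would invoke the two discrete inf-sup conditions. For the pressure, testing \eqref{problem:semidiscrete_formulation:velocity} against a suitable $\bfzh$ realizing the Darcy inf-sup gives $\|\ph\|_\Omega \lesssim \|\bfK^{-1/2}\bfwh\|_\Omega + \|g_p\|_{H^{1/2}(\partial_p\Omega)}$ pointwise in time, hence $\int\|\ph\|_\Omega^2\,ds$ is controlled by $\int c_h(\bfwh,\bfwh)\,ds$ already bounded by the energy identity; crucially, this route does not involve $s_0$, $\lambda$, or $\kappa$. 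For the displacement, the mixed-elasticity inf-sup applied to \eqref{problem:semidiscrete_formulation:stress} bounds $\|\bfuh\|_\Omega$ by $a_h(\bfsigmah,\bfsigmah)^{1/2}$, by $\|\ph\|_\Omega$, and by $\|\bfg_u\|_{H^{1/2}(\partial_u\Omega)}$, after which a Gr\"onwall-type absorption yields \eqref{eq:stability_theorem}. I expect the \emph{main obstacle} to be making every constant independent of $\lambda$ and $s_0$: the energy $\mathcal{E}_h$ degenerates on the hydrostatic part of $\bfsigmah$ as $\lambda\to\infty$, so one must carefully ensure that the recovery of $\|\bfuh\|_\Omega$ and $\|\ph\|_\Omega$ uses only the $\lambda$-robust pieces, namely $\|(2\mu)^{-1/2}\bdev\bfsigmah\|$, the Darcy term $c_h(\bfwh,\bfwh)$, and the divergence identity $\bdiv\bfsigmah=-\bfb$, rather than any bound that degrades with $\kappa^{-1}$ or $s_0$.
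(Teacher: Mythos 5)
Your strategy is essentially the paper's: differentiate \eqref{problem:semidiscrete_formulation:stress} in time, test with $\bfsigmah$, cross-test to cancel the mixed and coupling terms, rewrite the energy in the $\lambda$-robust form, integrate by parts in time for $(\bfb,\partialt\bfuh)_\Omega$, recover $\bfuh$ and $\ph$ via the two discrete inf-sup conditions, control $\norm[\bs{\Sigma}]{\cdot}$ through $\bdiv\bfsigmah=-\bfb$ and the $\mathrm{dev}$-$\mathrm{div}$ lemma, and bound the initial energy through \eqref{eq:discrete_initial}. However, one step would fail as written: the Darcy boundary term $\int_{t_0}^{t}\langle g_p,\bfwh\cdot\bfn\rangle_{\partial_p\Omega}\,\ds$ cannot be treated by ``standard trace-duality plus Young absorption''. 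Duality in $H^{\pm\onehalf}(\partial\Omega)$ bounds it by $\norm[\onehalf,\partial_p\Omega]{g_p}\,\norm[\bfW]{\bfwh}$, and the divergence part $\norm[\Omega]{\div\bfwh}$ is controlled by nothing on your left-hand side (the energy identity only yields $\int c_h(\bfwh,\bfwh)$, i.e.\ an $L^2$-in-time, $L^2$-in-space bound), nor can it be extracted from \eqref{problem:semidiscrete_formulation:pression} without invoking $\partialt\bfsigmah$ and $\partialt\ph$, which are equally uncontrolled. The paper's proof needs an extra idea here: integrate this term by parts in time, which introduces the filtration displacement $\overline{\bs{w}}_h(t)=\int_{t_0}^t\bfwh(s)\,\ds$, and then bound $\norm[\Omega]{\div\overline{\bs{w}}_h}$ by testing the \emph{time-integrated} pressure equation with $\qh=\div\overline{\bs{w}}_h$ (Step~3 of the paper, estimate \eqref{eq:estimate_wh}). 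This device is absent from your sketch.

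A second, related imprecision concerns the recovery of $\sup_t\norm[\Omega]{\bfuh(t)}$: bounding it ``by $a_h(\bfsigmah,\bfsigmah)^{\onehalf}$, by $\norm[\Omega]{\ph}$'' is not robust, since neither $a_h(\bfsigmah,\bfsigmah)$ (whose spherical part carries $\kappa^{-1}\norm[E]{\tr(\bs{\Pi}_{\rm s}^E\bfsigmah)}^2$) nor $\norm[\Omega]{\ph(t)}$ is controlled uniformly in time when $s_0\to0$ and $\lambda\to\infty$; only the grouped quantity $\norm[E]{\kappa^{-\onehalf}(3^{-1}\tr(\bs{\Pi}_{\rm s}^E\bfsigmah)+\alpha\ph)}$, together with the deviatoric and stabilization pieces, is available in $L^\infty$-in-time. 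You flag this obstacle, but the robust pieces you propose to rely on ($\bdev\bfsigmah$, $c_h(\bfwh,\bfwh)$, $\bdiv\bfsigmah=-\bfb$) do not suffice to absorb the pressure--stress coupling in the elasticity inf-sup; the paper's estimate \eqref{eq:estimate_uh} keeps precisely this combination, which is what makes the absorption of the $(\bfb,\partialt\bfuh)_\Omega$ term possible (and no Gr\"onwall argument is needed, only a fixed choice of the Young parameters).
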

\begin{proof}
The proof is divided into five steps.

\paragraph{Step 1: energy estimate} 
We differentiate~\eqref{problem:semidiscrete_formulation:stress} with respect to time and we choose as a test function $\bftauh = \bfsigmah$. We get 
\begin{equation}\label{eq:startstab}
a_h(\partialt \bfsigmah, \bfsigmah) +  (\partialt \bfuh,\bdiv \bfsigmah)_\Omega +  \left(\alpha  \mathcal{A}\bfI\partialt \ph, \bfsigmah\right)_\Omega 
= \langle \partialt\bfg_u, \bfsigmah\bfn\rangle_{\partial_u \Omega}.
\end{equation}
Then, we take $\bfvh=\partialt \bfuh$ in~\eqref{problem:semidiscrete_formulation:displacement}, $\bfzh=\bfwh$ in~\eqref{problem:semidiscrete_formulation:velocity}, $\qh=\ph$ in~\eqref{problem:semidiscrete_formulation:pression} and gathering the obtained results together with \eqref{eq:startstab}, we obtain
\begin{equation*}
    \begin{aligned}
        &a_h(\partialt\bfsigmah,\bfsigmah) +
\left(
\alpha\mathcal{A}\bfI \partialt \ph, \bfsigmah
\right)_\Omega +
\left(
\alpha   \mathcal{A}\bfI\ph, \partialt \bfsigmah
\right)_\Omega +
\left(
(s_0 + \alpha^2 \kappa^{-1} )
\partialt \ph, \ph
\right)_\Omega + 
\\ &\quad c_h(\bfwh, \bfwh) = \langle 
\partialt\bfg_u, \bfsigmah\bfn
\rangle_{\partial_u \Omega} +
\left(
\bfb,\partialt \bfuh
\right)_{\Omega} +
\langle g_{p}, \bfwh \cdot \bfn\rangle_{\partial_{p} \Omega} + 
(\psi, \ph)_\Omega.
    \end{aligned}
\end{equation*}
For the sake of presentation, we write the previous identity as 
\begin{equation}\label{eq:T_b_RHS}
\mathcal{N} + c_h(\bfwh, \bfwh) = \mathcal{R},
\end{equation}
where 
\begin{equation*}
\begin{aligned}
\mathcal{N}\hspace{-0.5mm} &=\hspace{-0.5mm}
a_h(\partialt\bfsigmah,\bfsigmah) \hspace{-1mm}+\hspace{-1mm}
\left(
\alpha\mathcal{A}\bfI \partialt \ph, \bfsigmah
\right)_\Omega \hspace{-1mm}+\hspace{-1mm}
\left(
\alpha   \mathcal{A}\bfI\ph, \partialt \bfsigmah
\right)_\Omega \hspace{-1mm}+\hspace{-1mm}
\left(
\frac{s_0\kappa +\alpha^2}\kappa
\partialt \ph, \ph
\right)_\Omega,
\\
   \mathcal{R} &= \langle 
\partialt\bfg_u, \bfsigmah\bfn
\rangle_{\partial_u \Omega} +
\left(
\bfb,\partialt \bfuh
\right)_{\Omega} +
\langle g_{p}, \bfwh \cdot \bfn\rangle_{\partial_{p} \Omega} + 
(\psi, \ph)_\Omega.
\end{aligned}
\end{equation*}
First, we focus our attention on the left-hand side term $\mathcal{N}$. Using the computability of the coupling term in~\eqref{eq:coupling_term_discrete} we have 
\begin{equation}\label{eq:N_t:1}
\begin{aligned}
\mathcal{N}= & 
\sum_{E\in\Th} a_h^E(\partialt\bfsigmah,\bfsigmah) +
\sum_{E\in\Th} \left(\alpha \partialt \ph, \frac{\tr(\bs{\Pi}_{\rm s}^E\bfsigmah)}{3\kappa}\right)_E +\\ &
\sum_{E\in\Th} \left(\alpha  \ph, \frac{\tr(\partialt\bs{\Pi}_{\rm s}^E\bfsigmah)}{3\kappa}\right)_E +
\left(
(s_0 + \alpha^2 \kappa^{-1} )
\partialt \ph, \ph
\right)_\Omega.
\end{aligned}
\end{equation}
By integrating by parts (with respect to time) the first, second, and fourth terms of~\eqref{eq:N_t:1}, and recalling the definition of $a_h^E(\cdot,\cdot)$ and~\eqref{eq:compliance}, we get
\begin{equation*}
    \begin{aligned}
2\mathcal{N} =&    
 \ddt
\sum_{E\in\Th}\left(
\left(
\frac{\bdev(\bs{\Pi}_{\rm s}^E\bfsigmah)}{2\mu}, \bs{\Pi}_{\rm s}^E\bfsigmah
\right)_E 
 +\left(
\frac{\tr(\bs{\Pi}_{\rm s}^E\bfsigmah)}{9\kappa}
, \tr(\bs{\Pi}_{\rm s}^E\bfsigmah)\right)_E
\right)+
 \\ &
 \ddt
\sum_{E\in\Th}
s_1^E((\bfI -\bs{\Pi}_{\rm s}^E) \bfsigmah,(\bfI -\bs{\Pi}_{\rm s}^E) \bfsigmah) +
\\& \ddt
\sum_{E\in\Th}
2\left(
\alpha \ph, \frac{\tr(\bs{\Pi}_{\rm s}^E\bfsigmah)}{3\kappa}
\right)_E
+ 
\ddt\left(
\left(
\alpha^2 \kappa^{-1} 
\ph, \ph
\right)_\Omega
+
\left(
s_0 
\ph, \ph
\right)_\Omega
\right).
    \end{aligned}
\end{equation*}
Hence, rearranging the previous terms we obtain
\begin{equation*}
    \begin{aligned}
        2\mathcal{N} =& 
   \ddt\sum_{E\in\Th}\left(\norm[E]{(2\mu)^{-\onehalf} \bdev(\bs{\Pi}_{\rm s}^E\bfsigmah)}^2
     +
     s_1^E((\bfI -\bs{\Pi}_{\rm s}^E) \bfsigmah,(\bfI -\bs{\Pi}_{\rm s}^E) \bfsigmah)\right)+\\& \ \ddt
     \sum_{E\in\Th}\norm[E]{\kappa^{-\onehalf}    \left(3^{-1}\tr(\bs{\Pi}_{\rm s}^E\bfsigmah)+ \alpha \ph \right)}^2 + \ddt \norm[\Omega]{s_0^{\onehalf} \ph}^2.
    \end{aligned}
\end{equation*}
After exploiting the stability of the bilinear forms $c_h(\cdot,\cdot)$ in~\eqref{eq:coercivity_continuity_bilinearforms} and  $s_1^E(\cdot,\cdot)$ in~\eqref{eq:stability_estimates_stab}, we integrate in time from $t_0$ to $t\le t_f$ to get
\begin{equation}\label{eq:main_estimate_stability}
    \begin{aligned}
&\sum_{E\in\Th} \left(
\norm[E]{(2\mu)^{-\onehalf} \bdev(\bs{\Pi}_{\rm s}^E\bfsigmah(t))}^2+
 \gamma_{1,*}\norm[E]{(2\mu)^{-\onehalf}(\bfI -\bs{\Pi}_{\rm s}^E) \bfsigmah(t)}^2
\right)
+\\
&
\sum_{E\in\Th}\norm[E]{\kappa^{-\onehalf}    \left(3^{-1}\tr(\bs{\Pi}_{\rm s}^E\bfsigmah(t))+ \alpha \ph(t) \right)}^2
+\norm[\Omega]{s_0^{\onehalf} \ph(t)}^2 
+\\
&
\ 2C_{v,*}\int_{t_0}^t\norm[\Omega]{\bfK^{-\onehalf}\bfwh(s)}^2\ds
\leq 2\int_{t_0}^t \mathcal{R}(s)\ds
+ \mathcal{N}_{t_0},      
    \end{aligned}
\end{equation}
where $\mathcal{N}_{t_0}$ is the evaluation of the first four terms in the left-hand side of \eqref{eq:main_estimate_stability} at time $t=t_0$ and only depends on the initial data.
\paragraph{Step 2: $L^2$-bounds for the primary variables} We start by establishing  bounds of the $L^2$-norm of the displacement $\bfuh$ and pressure $p_h$. Using the discrete inf-sup condition \cite[Proposition 4.5]{Dassi-Lovadina-Visinoni:2020}, equation~\eqref{problem:semidiscrete_formulation:stress}, along with the computability of the coupling term, equation~\eqref{eq:compliance}, the stability of the discrete bilinear form  $a_h(\cdot,\cdot)$, and the Cauchy-Schwarz inequality, we obtain 
\begin{equation*}
\begin{aligned}
\beta_{el} &\norm[\Omega]{\bfuh}  \leq \sup_{\bftauh\in\bs{\Sigma}_h} \frac{\left(\bfuh,\bdiv \bftauh \right)_\Omega}{\norm[\bs{\Sigma}]{\bftauh}} 
\\ = &
\sup_{\bftauh\in\bs{\Sigma}_h} \frac{\langle \bfg_u, \bftauh\bfn\rangle_{\partial\Omega} -a_h(\bfsigmah,\bftauh)- \sum_{E\in\Th} \left(\alpha\ph/3\kappa,\tr(\bs{\Pi}_{\rm s}^E\bftauh)\right)_E}{\norm[\bs{\Sigma}]{\bftauh}} 
\\ \leq & 
\sup_{\bftauh\in\bs{\Sigma}_h} \frac{|\langle \bfg_u, \bftauh\bfn\rangle_{\partial\Omega}| 
+\sum_{E\in\Th}\left|
\left((2\mu)^{-1}\bdev(\bs{\Pi}_{\rm s}^E\bfsigmah),\bs{\Pi}_{\rm s}^E\bftauh \right)_E \right|}{\norm[\bs{\Sigma}]{\bftauh}}+
\\
&\sup_{\bftauh\in\bs{\Sigma}_h} \frac{\sum_{E\in\Th} \left|\left((3\kappa)^{-1} (\alpha \ph + 3^{-1}\tr(\bs{\Pi}_{\rm s}^E\bfsigmah)) ,\tr(\bs{\Pi}_{\rm s}^E\bftauh)\right)_E +s_1^E(\bfsigmah,\bftauh)\right|
}{\norm[\bs{\Sigma}]{\bftauh}}  
\\ \leq & 
\sup_{\bftauh\in\bs{\Sigma}_h} \frac{
\norm[\onehalf,\partial\Omega]{\bfg_u}\norm[-\onehalf,\partial\Omega]{\bftauh \bfn} +
\sum_{E\in\Th}
\norm[E]{(2\mu)^{-1}\bdev(\bs{\Pi}_{\rm s}^E\bfsigmah)}\norm[E]{\bs{\Pi}_{\rm s}^E\bftauh}
}{\norm[\bs{\Sigma}]{\bftauh}}+
\\ 
&\sup_{\bftauh\in\bs{\Sigma}_h} 
\frac{ \sum_{E\in\Th}
\norm[E]{\kappa^{-1} (\alpha \ph + 3^{-1}\tr(\bs{\Pi}_{\rm s}^E\bfsigmah)}\norm[E]{3^{-1}\tr(\bs{\Pi}_{\rm s}^E\bftauh))}
}{\norm[\bs{\Sigma}]{\bftauh}}+\\ &
\sup_{\bftauh\in\bs{\Sigma}_h} 
\frac{
\sum_{E\in\Th}
2\gamma_1^*\norm[E]{(2\mu)^{-1}(\bfI-\bs{\Pi}_{\rm s}^E)\bfsigmah}\norm[E]{(\bfI-\bs{\Pi}_{\rm s}^E)\bftauh}
}{\norm[\bs{\Sigma}]{\bftauh}}.
\end{aligned}
\end{equation*}
Owing to the $H(\rmdiv)$ trace inequality, we infer from the previous bound that
\begin{equation*}
\begin{aligned}
\beta_{el}&\norm[\Omega]{\bfuh} \leq 
 C_{tr} \norm[\onehalf,\partial_u \Omega]{\bfg_u}
+
\sum_{E\in\Th}
\norm[E]{(2\mu)^{-1}\bdev(\bs{\Pi}_{\rm s}^E\bfsigmah)}
+\\
&
\sum_{E\in\Th}
\norm[E]{\kappa^{-1}(\alpha\ph + 3^{-1}\tr(\bs{\Pi}_{\rm s}^E\bfsigmah))}
 +
\sum_{E\in\Th}
2\gamma_1^*\norm[E]{(2\mu)^{-1}(\bfI-\bs{\Pi}_{\rm s}^E)\bfsigmah}.
    \end{aligned}
\end{equation*}
where $C_{tr}>0$ only depends on $\Omega$. Squaring the previous inequality and recalling the assumptions on the model coefficients we have 
\begin{equation}\label{eq:estimate_uh}
\begin{aligned}
&\norm[\Omega]{\bfuh}^2 \leq C_1
\left(\norm[\onehalf,\partial_u \Omega]{\bfg_u}^2
+
\sum_{E\in\Th}
\norm[E]{\kappa^{-\onehalf}(\alpha\ph + \tr(\bs{\Pi}_{\rm s}^E\bfsigmah)/3)}^2
\right.
+\\ 
&\qquad\left.
\sum_{E\in\Th}
\norm[E]{(2\mu)^{-\onehalf}\bdev(\bs{\Pi}_{\rm s}^E\bfsigmah)}^2
 + \sum_{E\in\Th}
\gamma_{1,*}\norm[E]{(2\mu)^{-\onehalf}(\bfI-\bs{\Pi}_{\rm s}^E)\bfsigmah}^2\right),
\end{aligned}
\end{equation}
where $C_{1}=4\beta_{el}^2\max\left(C_{tr}^2,\ 
2(\gamma_1^*)^2 (\underline{\mu}\gamma_{1,*})^{-1}\right)$.
By adopting a similar argument, we can derive the following result for the $L^2$-norm of $\ph$:
\begin{equation*}
\begin{aligned}
\beta_{f}\norm[\Omega]{\ph} & \leq \sup_{\bfzh\in\bfW_h} \frac{\left(\ph,\div \bfzh \right)_\Omega}{\norm[\bf W]{\bfzh}} \\
&= 
\sup_{\bfzh\in\bfW_h} \frac{\langle g_p, \bfzh \cdot \bfn \rangle_{\partial\Omega} - c_h(\bfwh,\bfzh)}{\norm[\bf W]{\bfzh}}
\\&\leq 
\sup_{\bfzh\in\bfW_h} \frac{\norm[\onehalf,\partial_p\Omega]{g_p} \norm[-\onehalf,\partial\Omega]{\bfzh \cdot \bfn}+ (1+\gamma_{2}^*)\norm[\Omega]{\bs{K}^{-\onehalf}\bfwh} \norm[\Omega]{\bs{K}^{-\onehalf}\bfzh}}{\norm[\bf W]{\bfzh}}
\\ &
\leq C_{tr}
\norm[\onehalf,\partial_p\Omega]{g_p} + (1+\gamma_{2}^*)\underline{K}^{-\onehalf}\norm[\Omega]{\bs{K}^{-\onehalf}\bfwh}.
\end{aligned}
\end{equation*}
By squaring both sides of the previous inequality, integrating in time from $t_0$ to $t$, and introducing $C_{2}=\beta_f^2\max\left(2 C_{tr}^2,\ \frac{(1+\gamma_{2}^*)^2}{\underline{K}C_{v,*}}\right)$, we obtain
\begin{equation}\label{eq:estimate_ph}
\int_{t_0}^t \norm[\Omega]{\ph(s)}^2\ds \leq C_{2} \left( \int_{t_0}^t
\norm[\onehalf,\partial_p\Omega]{g_p(s)}^2 + {2C_{v,*}} \norm[\Omega]{\bs{K}^{-\onehalf}\bfwh(s)}^2\ds \right).
\end{equation}

\paragraph{Step 3: $\bfH(\rm{div})$-bounds for the dual variables}
For the estimate of the stress divergence, we simply test equation~\eqref{problem:semidiscrete_formulation:displacement}  with $v_h=\div \bfsigmah / (2\mu)$ and apply the Cauchy--Schwarz inequality to get
$$
\norm[\Omega]{\div\left((2\mu)^{-\onehalf}\bfsigmah\right)}^2\le (2\underline{\mu})^{-1}\norm[\Omega]{\bfb}^2.
$$
Therefore, using the $\mathrm{dev}$-$\mathrm{div}$ Lemma (cf. \cite[Proposition 9.1.1]{boffi2013} and \cite[Lemma A.4]{Visinoni:25}) followed by the previous bound, we infer the existence of a positive constant $C_D$ depending only on the shape regularity such that
\begin{equation}\label{eq:estimate_sigmah}
\begin{aligned}
C_D\norm[\bs{\Sigma}]{(2\mu)^{-\onehalf}\bfsigmah}^2 &\le
 \norm[\Omega]{(2\mu)^{-\onehalf}\bdev(\bfsigmah)}^2 + 
\norm[\Omega]{\div\left((2\mu)^{-\onehalf}\bfsigmah\right)}^2
\\&\le  \sum_{E\in\Th}
\norm[E]{(2\mu)^{-\onehalf}\bdev(\bs{\Pi}_{\rm s}^E\bfsigmah)}^2+
(2\underline{\mu})^{-1}\norm[\Omega]{\bfb}^2 
\\
&\quad+C_{s,*}^{-1}\sum_{E\in\Th}\gamma_{1,*}\norm[E]{(2\mu)^{-\onehalf}(\bfI -\bs{\Pi}_{\rm s}^E) \bfsigmah}^2.
\end{aligned}
\end{equation}
We now proceed by deriving an upper bound for the filtration displacement $\overline{\bs{w}}_h$ defined as the time integral of the Darcy velocity, i.e. $\overline{\bs{w}}_h(t)=\int_{t_0}^t \bfwh(s) \rm{d}s$.  
First, it follows from the Cauchy--Schwarz inequality that  
\begin{equation}\label{eq:estimate1_whL2}
\begin{aligned}
\hspace{-2mm}
\norm[\Omega]{\overline{\bs{w}}_h(t)}^2  
&= \int_\Omega \left(\int_{t_0}^t\bfwh(s)\ \rm{d}s\right)^2 
\le (t-t_0) \int_\Omega \int_{t_0}^t \bfwh(s)^2\ {\rm{d}}s
\\&= (t-t_0) \int_{t_0}^t \norm[\Omega]{\bfwh(s)}^2 {\rm{d}s}
\le (t-t_0)\overline{K} 
\int_{t_0}^t \norm[\Omega]{\bs{K}^{-\onehalf}\bfwh(s)}^2{\rm{d}}s.
\end{aligned}
\end{equation}
Then, we integrate in time equation~\eqref{problem:semidiscrete_formulation:pression} from $t_0$ to $t$ and choose $\qh = \div\overline{\bs{w}}_h$ as a test function. Then, using the Cauchy-Schwarz  inequality, we have that 
\begin{equation*}
    \begin{aligned}
        &\norm[\Omega]{\div\overline{\bs{w}}_h(t)} \leq 
        \norm[\Omega]{s_0\ph(t)}
        + \left(\sum_{E\in\Th}\norm[\Omega]{\frac{\alpha}{\kappa} \left(\tr(\Pi_s^E\bfsigmah(t))/3 + \alpha\ph(t)\right)}^2\right)^\onehalf + \\
        &\qquad \norm[\Omega]{s_0\ph(t_0)}
        + \left(\sum_{E\in\Th}\norm[\Omega]{\frac{\alpha}{\kappa} \left(\tr(\Pi_s^E\bfsigmah(t_0))/3 + \alpha\ph(t_0)\right)}^2\right)^\onehalf +\norm[\Omega]{\overline{\psi}(t)},
    \end{aligned}
\end{equation*}
where $\overline{\psi}(t)=\int_{t_0}^t \psi(s)\ \rm{d}s$.
Owing to the definitions of the storativity coefficient $s_0$ and bulk modulus $\kappa$, we can assume without loss of generality that $s_0\le C_3 \kappa^{-1}\le C_3 \underline{\mu}^{-1}$, with $C_3$ independent of the model and discretization parameters. Thus, squaring the previous estimate, it is inferred that 
\begin{equation*}
        \norm[\Omega]{\div\overline{\bs{w}}_h}^2 \leq \frac{4 C_3}{\underline{\mu}}\left(
        \norm[\Omega]{s_0^{\onehalf}\ph}^2
        + \sum_{E\in\Th}\norm[\Omega]{\frac{\tr(\Pi_s^E\bfsigmah) + d\alpha\ph}{3\kappa^{\onehalf}}}^2 + \mathcal{N}_{t_0}\right)
        +4\norm[\Omega]{\overline{\psi}}^2,
\end{equation*}
with $\mathcal{N}_{t_0}$ defined in~\eqref{eq:main_estimate_stability}. 
Summing the previous estimate and \eqref{eq:estimate1_whL2}, we have
\begin{equation}~\label{eq:estimate_wh}
\begin{aligned}
\norm[\bfW]{\overline{\bs{w}}_h}^2
&\leq C_{4} \left(\norm[\Omega]{s_0^{\onehalf}\ph}^2
        + \sum_{E\in\Th}
\norm[E]{\kappa^{-\onehalf}(\alpha\ph + \tr(\bs{\Pi}_{\rm s}^E\bfsigmah)/3)}^2 + \mathcal{N}_{t_0}  \right. \\
& \left. \qquad\;
+(t-t_0)\int_{t_0}^t \norm[\Omega]{\psi(s)}^2\ds 
+ 2C_{v,*} \int_{t_0}^t\norm[\Omega]{\bs{K}^{-\onehalf}\bfwh(s)}^2\ds\right),
\end{aligned}
\end{equation}
with $C_{4} = 4\max\left(C_3 \underline{\mu}^{-1}, 1,\ \frac{(t-t_0)\overline{K}}{8 C_{v,*}}\right)$.
\paragraph{Step 4: estimate of the right-hand side in~\eqref{eq:main_estimate_stability}}
We aim to bound
\begin{equation}\label{eq:rhs_main_estimate}
\begin{aligned}
    \int_{t_0}^t \mathcal{R}(s)\ds = &
    \underbrace{\int_{t_0}^t
\left(
\bfb(s),\partialt \bfuh(s)
\right)_{\Omega}\ds
}_{=:T_1}+ 
\underbrace{\int_{t_0}^t
(\psi(s), \ph(s))_\Omega\ds
}_{=:T_2}+\\
&
\underbrace{
\int_{t_0}^t \langle 
\partialt\bfg_u(s), \bfsigmah(s)\bfn
\rangle_{\partial_u \Omega}\ds
}_{=:T_3}
+
\underbrace{\int_{t_0}^t
\langle g_{p}(s), \bfwh(s) \cdot \bfn\rangle_{\partial_{p} \Omega}\ds
}_{=:T_4}. 
\end{aligned}
\end{equation}
Integrating by parts (with respect to time), using the Cauchy-Schwarz and Young inequalities with 
$\varepsilon_1 > 0$, and reasoning as in \eqref{eq:estimate1_whL2}, we obtain 
\begin{equation}~\label{eq:t1_rhs}
\begin{aligned}
T_1 &= (\bfb(t), \bfuh(t))_\Omega - (\bfb(t_0), \bfuh(t_0))_\Omega + \int_{t_0}^t (\partialt \bfb, \bfuh)_\Omega\ds \\
    &\leq  \norm[\Omega]{\bfb(t)} \norm[\Omega]{\bfuh(t)} + \norm[\Omega]{\bfb(t_0)} \norm[\Omega]{\bfuh(t_0)} 
    + \int_{t_0}^t \frac{\norm[\Omega]{\partialt \bfb(s)}}{(t-t_0)^{-\onehalf}}\ 
    \frac{\norm[\Omega]{\bfuh(s)}}{(t-t_0)^\onehalf}\ds\\
    &\leq  
    \sup_{s\in[t_0,t]}\frac{\norm[\Omega]{\bfuh(s)}^2}{4\varepsilon_1}
    +4\varepsilon_1\norm[\Omega]{\bfb(t_0)}^2
    +8\varepsilon_1(t-t_0) \int_{t_0}^t \norm[\Omega]{\partialt\bfb(s)}^2\ds.
\end{aligned}
\end{equation}
For the second term, we use again the Cauchy-Schwarz and Young inequalities with $\varepsilon_2 > 0$ in order to obtain
\begin{equation}~\label{eq:t2_rhs}
    T_2\leq  \int_{t_0}^t\norm[\Omega]{\psi(s)} \norm[\Omega]{\ph(s)}\ds 
    \leq \frac{\varepsilon_2}2\int_{t_0}^t\norm[\Omega]{\psi(s)}^2\ds 
    + \int_{t_0}^t \frac{\norm[\Omega]{\ph(s)}^2}{2\varepsilon_2}\ds 
\end{equation}
For the terms $T_3$ and $T_4$ related to the non-homogeneous boundary conditions, we use the Cauchy-Schwarz inequality, the $\bfH(\rm{div})$-trace inequality, and the Young inequality with $\varepsilon_3>0$ in order to get
\begin{equation}~\label{eq:t3_rhs}
    \begin{aligned}
        T_3 &\leq \int_{t_0}^t\norm[\onehalf,\partial_u\Omega]{\partialt\bfg_u(s)}  \norm[-\onehalf, \partial_u\Omega]{\bfsigmah(s)\bfn}\ds\\
        &\leq C_{tr} \int_{t_0}^t (t-t_0)^\onehalf \norm[\onehalf,\partial_u\Omega]{\partialt\bfg_u(s)}  \frac{\norm[\bs{\Sigma}]{\bfsigmah(s)}}{(t-t_0)^\onehalf}\ds\\
        &\leq \frac{\varepsilon_3}4 \sup_{s\in[t_0,t]}\norm[\bs{\Sigma}]{\bfsigmah(s)}^2+ \frac{C_{tr}^2(t-t_0)}{\varepsilon_3} \int_{t_0}^t\norm[\onehalf,\partial_u\Omega]{\partialt\bfg_u(s)}^2 \ds.
    \end{aligned}
\end{equation}
Finally, for the term $T_4$, we integrate by parts in time and proceed as for term $T_1$ to infer
\begin{equation}~\label{eq:t4_rhs}
    \begin{aligned}
        T_4 &
        =\langle g_p(t), \overline{\bs{w}}_h(t)\cdot \bfn \rangle -\langle g_p(t_0), \overline{\bs{w}}_h(t_0)\cdot \bfn \rangle +
        \int_{t_0}^t \langle\partialt g_p(s), \overline{\bs{w}}_h(s)\cdot \bfn \rangle\ \ds\\
        &\leq \frac{\varepsilon_4}2 \sup_{s\in[t_0,t]}\norm[\bs{W}]{\overline{\bs{w}}_h(s)}^2\hspace{-0.5mm} +
        \frac{2 \norm[\onehalf, \partial\Omega]{g_p(t_0)}^2}{C_{tr}^{-2}\varepsilon_4}
        + \frac{2 (t-t_0)}{C_{tr}^{-2}\varepsilon_4} \hspace{-0.5mm} \int_{t_0}^t\hspace{-1mm}\norm[\onehalf,\partial_p\Omega]{\partialt g_p(s)}^2 \hspace{-0.5mm}\ds.
    \end{aligned}
\end{equation}
We can also derive an estimate for the term $\mathcal{N}_{t_0}$ appearing in \eqref{eq:main_estimate_stability} by testing the initial problem \eqref{eq:discrete_initial} with $(\bftauh, \bfvh, q_h)=(\bfsigmah(t_0), \bfuh(t_0), p_h(t_0))$. Then, reasoning as in the previous steps, it is inferred that 
\begin{equation*}
\mathcal{N}_{t_0} \hspace{-0.5mm}\le \hspace{-0.5mm}
\frac{\norm[\Omega]{\bfuh(t_0)}^2}{2\varepsilon_1}
    \hspace{-0.5mm}+\hspace{-0.5mm}2\varepsilon_1 \hspace{-0.5mm}\norm[\Omega]{\bfb(t_0)}^2\hspace{-0.5mm}
    +\hspace{-0.5mm}\frac{\varepsilon_3}2\hspace{-0.5mm}\norm[\bs{\Sigma}]{\bfsigmah(t_0)}^2 \hspace{-0.5mm}+ \hspace{-0.5mm}\frac{2C_{tr}^2}{\varepsilon_3} \norm[\onehalf,\partial_u\Omega]{\bfg_u(t_0)}^2
    \hspace{-0.5mm}+ \hspace{-1mm}\norm[\Omega]{\eta_0}^2\hspace{-0.5mm}.
\end{equation*}
Now, we sum the terms $T_i$, for $i=1,\dots,4$, namely inequalities~\eqref{eq:t1_rhs},~\eqref{eq:t2_rhs},~\eqref{eq:t3_rhs}, and~\eqref{eq:t4_rhs} for $t=t_f$ and combine with the previous bound on $\mathcal{N}_{t_0}$ to get
\begin{equation}\label{eq:tfinale_rhs}
\begin{aligned}
\int_{t_0}^{t_f} 2\mathcal{R}(s)\ds + \mathcal{N}_{t_0} &\leq 
\sup_{s\in[t_0,t_f]}\frac{\norm[\Omega]{\bfuh(s)}^2}{\varepsilon_1}+
\int_{t_0}^{t_f} \frac{\norm[\Omega]{\ph(s)}^2}{\varepsilon_2}\ds+
\varepsilon_3\sup_{s\in[t_0,t_f]}\norm[\bs{\Sigma}]{\bfsigmah(s)}^2\\
&\quad
+\varepsilon_4\sup_{s\in[t_0,t_f]}\norm[\bs{W}]{\overline{\bs{w}}_h(s)}^2
+\widetilde{\mathcal{C}}_{\varepsilon}(\bfb, \psi, \bfg_u, g_p, \eta_0),
\end{aligned}
\end{equation}
where we have collected all the terms only depending on the problem data in the quantity $\widetilde{\mathcal{C}}_{\varepsilon}(\bfb, \psi, \bfg_u, g_p, \eta_0)$.
\paragraph{Step 5: stability estimate}
The last step of the proof consists in plugging the bounds \eqref{eq:estimate_uh}, \eqref{eq:estimate_ph}, \eqref{eq:estimate_sigmah}, and \eqref{eq:estimate_wh} obtained in the second and third steps into the estimate \eqref{eq:tfinale_rhs} and then combining the result with~\eqref{eq:main_estimate_stability}. To do so, we take the supremum for $t\in[t_0,t_F]$ in both side of~\eqref{eq:main_estimate_stability} and set 
$$
\varepsilon_1 = 8C_1,\quad
\varepsilon_2 = 4C_2,\quad
\varepsilon_3 = \frac{C_D C_{s,*}}{8\overline{\mu}},\quad
\varepsilon_4 = \frac1{4C_4}
$$
in \eqref{eq:tfinale_rhs}. Rearranging the resulting inequality, we infer
\begin{equation}\label{eq:estimate_stability_step5}
    \begin{aligned}
&\sup_{t\in[t_0,t_f]}\Big[\sum_{E\in\Th} \left(
\norm[E]{(2\mu)^{-\onehalf} \bdev(\bs{\Pi}_{\rm s}^E\bfsigmah(t))}^2+
 \gamma_{1,*}\norm[E]{(2\mu)^{-\onehalf}(\bfI -\bs{\Pi}_{\rm s}^E) \bfsigmah(t)}^2+
\right.\\
&
\left.\qquad\qquad\qquad
\norm[E]{\kappa^{-\onehalf} \left(3^{-1}\tr(\bs{\Pi}_{\rm s}^E\bfsigmah(t))+ \alpha \ph(t) \right)}^2\right) 
+\norm[\Omega]{s_0^{\onehalf} \ph(t)}^2 \Big]
+\\
&
\quad \int_{t_0}^{t_f}\norm[\Omega]{\bfK^{-\onehalf}\bfwh(s)}^2\ds
\leq C\left((t_f-t_0)\mathcal{C}(\bfb, \psi, \bfg_u, g_p) +
\mathcal{C}_0(\bfb, \bfg_u, g_p, \eta_0)\right),      
\end{aligned}
\end{equation}
where the constant $C>0$ appearing in the previous estimate is independent of $s_0$, $\lambda$, the time interval $[t_0,t_f]$, and the discretization parameters, and the quantities $\mathcal{C}(\bfb, \psi, \bfg_u, g_p)$ and $\mathcal{C}_0(\bfb, \bfg_u, g_p, \eta_0)$ are defined in \eqref{eq:stab.rhs}.

The conclusion follows using again \eqref{eq:estimate_uh}, \eqref{eq:estimate_ph}, and \eqref{eq:estimate_sigmah} to observe that the left-hand side of \eqref{eq:estimate_stability_step5} yields an upper bound for the terms appearing in the stability estimate \eqref{eq:stability_theorem}.
\end{proof}

For the error analysis, we require additional regularity of the solutions to problem \eqref{problem:variationalFormulation}: in particular, for any $t\in(t_0,t_f]$, we assume $\bfsigma(t)\in\bs{\Sigma}\cap\bbH^{1}(\Omega)$ such that $\bdiv \bfsigma(t)\in\bfH^1(\Omega)$, $\bfu(t)\in\bfU\cap \bfH^1(\Omega)$, $\bfw(t) \in\bfW\cap \bfH^1(\Omega)$ such that $\div \bfw (t) \in H^1(\Omega)$ and $p(t)\in Q\cap H^1(\Omega)$. We start introducing the estimates of the VEM interpolation operators $\bfsigmaI\in\bs{\Sigma}_h$ of $\bfsigma$, $\bfuI\in\bfU_h$ of $\bfu$, $\bfwI\in\bfW_h$ of $\bfw$ and $\pI\in Q_h$ of $p$, see~\cite{Dassi-Lovadina-Visinoni:2020,Visinoni:25,Brezzi-Falk-Marini:2014}.
\begin{lemma}\label{lemma:interpolation_estimates}
Under the mesh assumptions A1, A2 and A3, and the regularity hypotheses stated above, there exists a positive constant $C$ independent of the mesh size $h$ such that, for every element $E\in\Th$, it holds
\begin{equation*}
\begin{aligned}
\norm[E]{\bfsigma-\bfsigmaI} &\leq C h \seminorm[1,E]{\bfsigma} \quad\; \text{and} \quad \norm[E]{\bdiv\bfsigma-\bdiv\bfsigmaI} \leq C h \seminorm[1,E]{\bdiv\bfsigma},\\
\norm[E]{\bfw-\bfwI} &\leq C h  \seminorm[1,E]{\bfw}\quad  \text{and} \quad 
\norm[E]{\div\bfw- \div\bfwI}\leq C h \seminorm[1,E]{\div \bfw},\\
\norm[E]{\bfu-\bfuI} &\leq C h |\bfu|_{1,E},
\quad  \text{and} \quad 
\norm[E]{p-\pI}\leq C h \seminorm[1,E]{p}.
\end{aligned}
\end{equation*}
\end{lemma}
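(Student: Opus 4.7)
The plan is to handle each interpolant individually, since they live in distinct virtual element spaces with distinct DOFs. For each one I would follow the standard VEM recipe: (i) identify a polynomial subspace that the interpolation preserves exactly, (ii) apply a Bramble--Hilbert type argument on the element $E$ (which is star-shaped by assumption A1), and (iii) whenever a divergence bound is needed, exploit a commuting-diagram property that links the virtual interpolation with an $L^2$-projection.

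The easier cases are $\pI$ and $\bfuI$. By \eqref{pressure_localdofs} and \eqref{displacement_localdofs}, these interpolants reduce to the local $L^2$-projection onto $\Poly{0}{E}$ and $\RM(E)$ respectively. Since both spaces contain all constants (and $\RM(E)$ in fact contains affine rigid motions), the Bramble--Hilbert lemma on the ball $B_E$, combined with the Deny--Lions theorem and a scaling argument controlled by the ratio $h_E/r_E \le \gamma^{-1}$ from A1, immediately yields $\norm[E]{p-\pI}\le C h \seminorm[1,E]{p}$ and $\norm[E]{\bfu-\bfuI}\le C h \seminorm[1,E]{\bfu}$.

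For the Darcy velocity, I would follow Brezzi--Falk--Marini. The DOFs \eqref{velocity_localdofs} prescribe $(\bfwI \cdot\bfn)_f = |f|^{-1}\int_f \bfw\cdot\bfn$, which preserves constant vector fields, so a standard DOF-based $L^2$ approximation argument under A1--A3 gives the first bound. For the divergence, I would exploit the commuting property $\div \bfwI = \Pi^0_E(\div\bfw)$: this follows by using the explicit formula for $\div\bfwh$ given after \eqref{velocity_localdofs} together with the divergence theorem applied to $\bfw$ face by face. The estimate then reduces to the standard $L^2$-projection error bound for $\div\bfw$ onto $\Poly{0}{E}$, giving the claimed $O(h)$ rate. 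The stress case is analogous but structurally heavier: the face DOFs \eqref{stress_localdofs}--\eqref{stress_localdofs2} must be shown to preserve constant symmetric tensors (tangential and normal moments on each $f\in\partial E$ uniquely reconstruct a constant $\bfsigma$), and the commuting property $\bdiv\bfsigmaI = \Pi^{\RM}_E(\bdiv\bfsigma)$ must be invoked, using that $\bdiv\bfsigmaI\in\RM(E)$ is uniquely identified by boundary data. Once these two ingredients are in place, scaling and Bramble--Hilbert under A1--A3 close the two stress estimates.

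The main obstacle is precisely the stress case: unlike the scalar/vector spaces, $\bs{\Sigma}_h(E)$ is a virtual space constrained to be symmetric, to have tractions in the non-trivial face space $\bfT_h(f)$, and to have divergence in $\RM(E)$. Verifying polynomial preservation and the divergence commuting property requires carefully unwinding the DOF construction and the definition of $\bfT_h(f)$ in \eqref{eq:face_approx}. However, all of this machinery has been developed in \cite{Dassi-Lovadina-Visinoni:2020} and \cite{Visinoni:25}, and similarly for the Raviart--Thomas VEM interpolant in \cite{Brezzi-Falk-Marini:2014}, so in practice the proof consists mainly in collecting these existing estimates and verifying that the shape-regularity constants from A1--A3 give the uniform bound stated.
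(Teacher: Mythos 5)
Your proposal is correct and matches what the paper itself does: the lemma is stated without proof, accompanied only by citations to \cite{Dassi-Lovadina-Visinoni:2020,Visinoni:25,Brezzi-Falk-Marini:2014}, which are exactly the sources you defer to for the stress, velocity, and displacement/pressure interpolants. Your additional sketch (polynomial preservation, Bramble--Hilbert under A1--A3, and the commuting-diagram identities $\div\bfwI=\Pi^0_E(\div\bfw)$ and $\bdiv\bfsigmaI=\Pi^{\RM}_E(\bdiv\bfsigma)$) is a faithful summary of how those cited results are obtained.
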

\begin{lemma}\label{lemma:interpolation_estimates2}
Under the mesh assumptions A1, A2 and A3, and the regularity hypotheses stated above, there exists  $C>0$ independent of $\lambda$ and $h$ such that
\begin{equation}
\begin{aligned}
a_h(\bfsigma,\bftauh) - (\mathcal{A}\bfsigma, \bftauh)_{\Omega} &\le C h \norm[\bbH^1(\Omega)]{\bfsigma}\norm[\Omega]{\bftauh} \quad &\forall \bftauh \in\bs\Sigma_h,\\
c_h(\bfw, \bfzh) - (\bfK^{-1} \bfw , \bfzh)_{\Omega} &\le C h\norm[\bfH^1(\Omega)]{\bfw}\norm[\Omega]{\bfzh} \quad &\forall \bfzh \in \bfW_h.
\end{aligned}
\end{equation}
\end{lemma}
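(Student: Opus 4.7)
The plan is to prove both consistency estimates by a single standard VEM argument, exploiting the $L^2$-orthogonality of the projections $\bs{\Pi}_{\rm s}^E$ and $\bs{\Pi}^E$ together with the hypothesis that $\mathcal{A}$ and $\bfK^{-1}$ are piecewise constant over $\Th$. I describe the first bound in detail; the second will follow by the same template.

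I will first split the error elementwise. On each $E\in\Th$, since $\mathcal{A}\bs{\Pi}_{\rm s}^E\bfsigma\in\Poly{0}{E}^{3\times 3}_{\rm s}$, the defining identity \eqref{eq:projection:elasticity} of $\bs{\Pi}_{\rm s}^E$ gives
\begin{equation*}
(\mathcal{A}\bfsigma,\bftauh)_E=(\mathcal{A}\bs{\Pi}_{\rm s}^E\bfsigma,\bs{\Pi}_{\rm s}^E\bftauh)_E+\big(\mathcal{A}(\bfI-\bs{\Pi}_{\rm s}^E)\bfsigma,\bftauh\big)_E,
\end{equation*}
and subtracting this from the definition of $a_h^E(\bfsigma,\bftauh)$ yields the key identity
\begin{equation*}
a_h^E(\bfsigma,\bftauh)-(\mathcal{A}\bfsigma,\bftauh)_E=s_1^E\big((\bfI-\bs{\Pi}_{\rm s}^E)\bfsigma,(\bfI-\bs{\Pi}_{\rm s}^E)\bftauh\big)-\big(\mathcal{A}(\bfI-\bs{\Pi}_{\rm s}^E)\bfsigma,\bftauh\big)_E.
\end{equation*}

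The second piece will be controlled via Cauchy--Schwarz, the uniform operator bound $\|\mathcal{A}\|\le(2\underline{\mu})^{-1}$ (its eigenvalues on symmetric tensors are $(2\mu)^{-1}$ on the deviatoric subspace and $(d^2\kappa)^{-1}$ on the spherical one, both bounded uniformly in $\lambda$ since $\kappa\ge 2\underline{\mu}/d$), and the standard $L^2$-approximation estimate $\norm[E]{(\bfI-\bs{\Pi}_{\rm s}^E)\bfsigma}\le Ch_E\seminorm[1,E]{\bfsigma}$. For the stabilization piece I plan to apply Cauchy--Schwarz on the positive semidefinite form $s_1^E$. On the discrete factor, noting that constant symmetric tensors belong to $\bs{\Sigma}_h(E)$ (take $\bfw^\ast(\bfx)=M\bfx$ with $M\in\Rbb^{3\times 3}_{\rm s}$ in the definition of the space), we have $(\bfI-\bs{\Pi}_{\rm s}^E)\bftauh\in\bs{\Sigma}_h(E)$, so the upper bound in \eqref{eq:stability_estimates_stab} controls $s_1^E((\bfI-\bs{\Pi}_{\rm s}^E)\bftauh,(\bfI-\bs{\Pi}_{\rm s}^E)\bftauh)^{1/2}$ by $C\underline{\mu}^{-1/2}\norm[E]{\bftauh}$. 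On the $\bfsigma$-factor I will expand $s_1^E$ using its definition \eqref{eq:stabilization:elasticity}, invoke the face trace inequality $\|\bftau\bfn\|_{L^2(f)}^2\le C(h_E^{-1}\norm[E]{\bftau}^2+h_E\seminorm[1,E]{\bftau}^2)$ valid for $\bftau\in\bbH^1(E)$, and combine with the approximation properties of $\bs{\Pi}_{\rm s}^E$; since $\xi_{1,E}=\tfrac12\tr(\mathcal{A}_{|E})\le C\underline{\mu}^{-1}$ is again $\lambda$-independent, this will yield a factor of order $h_E\seminorm[1,E]{\bfsigma}$.

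Summing the local estimates over $E\in\Th$ and applying a discrete Cauchy--Schwarz inequality then gives the first claim. The second estimate follows by the identical strategy, with $\bs{\Pi}^E$, $s_2^E$, and $\xi_{2,E}$ replacing $\bs{\Pi}_{\rm s}^E$, $s_1^E$, and $\xi_{1,E}$, and with the operator bound $\|\bfK^{-1}\|\le\underline{K}^{-1}$; $\lambda$-independence is automatic here since $\bfK$ is unrelated to the elastic coefficients. The step I expect to be most delicate is tracking the $\lambda$-independence throughout the stabilization bound, in particular the combination of $\xi_{1,E}$ with the $\underline{\mu}^{-1/2}$ factor coming from \eqref{eq:stability_estimates_stab}. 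The crucial algebraic observation is that $\mathcal{A}$ and $\xi_{1,E}$ depend on $\lambda$ only through $\kappa^{-1}=d(2\mu+d\lambda)^{-1}$, which stays bounded as $\lambda\to\infty$ thanks to $\mu\ge\underline{\mu}>0$.
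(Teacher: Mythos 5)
Your argument is correct, but it is not the route the paper takes: the paper's proof of this lemma is a two-line citation, deferring the consistency bounds to \cite[Proposition 5.7]{Artioli-DeMiranda-Lovadina-Patruno:2017} and \cite[Proposition 5.2]{Brezzi-Falk-Marini:2014}, combined with the stabilization bounds \eqref{eq:stability_estimates_stab} and the interpolation estimates of Lemma~\ref{lemma:interpolation_estimates}. You instead reconstruct the consistency estimate from first principles: the orthogonality identity $(\mathcal{A}\bs{\Pi}_{\rm s}^E\bfsigma,\bftauh)_E=(\mathcal{A}\bs{\Pi}_{\rm s}^E\bfsigma,\bs{\Pi}_{\rm s}^E\bftauh)_E$, which is legitimate precisely because the Lam\'e parameters are piecewise constant so $\mathcal{A}\bs{\Pi}_{\rm s}^E\bfsigma\in\Poly{0}{E}^{3\times3}_{\rm s}$; the $\lambda$-uniform operator bound on $\mathcal{A}$; the observation that constant symmetric tensors (resp.\ constant vectors) lie in $\bs{\Sigma}_h(E)$ (resp.\ $\bfW_h(E)$), so that \eqref{eq:stability_estimates_stab} may be applied to the discrete factor $(\bfI-\bs{\Pi}_{\rm s}^E)\bftauh$; and the element-to-face trace inequality plus Bramble--Hilbert for the continuous factor in the stabilization term. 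This is essentially the argument hidden inside the cited propositions, so the content matches; what your version buys is that the $\lambda$-independence and the precise role of the piecewise-constant coefficients and of $\xi_{1,E}=\tfrac12\tr(\mathcal{A}_{|E})$ are verified explicitly rather than inherited from the references, at the cost of length. Two small remarks: the eigenvalue of $\mathcal{A}$ on the spherical subspace is $(d\kappa)^{-1}$, not $(d^2\kappa)^{-1}$ (harmless, since both are bounded by $(2\underline{\mu})^{-1}$), and your use of the trace inequality on the $\bfsigma$-factor is the right move because \eqref{eq:stability_estimates_stab} is stated only for discrete fields, a point you correctly respect by treating the two factors of the Cauchy--Schwarz bound for $s_1^E$ differently.
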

\begin{proof}
The proof directly follows from~\cite[Proposition 5.7]{Artioli-DeMiranda-Lovadina-Patruno:2017}, \cite[Proposition 5.2]{Brezzi-Falk-Marini:2014}, the bounds in~\eqref{eq:stability_estimates_stab}, and Lemma~\ref{lemma:interpolation_estimates}.
\end{proof}
We remark that the global VE interpolation operators are assembled from the local contributions; hence, for simplicity, we use the same notation for both operators. For all $t\in[t_0,t_f]$, we additionally split the errors as follows
\begin{equation*}
\begin{aligned}
\errsigma(t) &:= \bfsigma(t) -\bfsigmah(t) = (\bfsigma - \bfsigmaI)(t) +(\bfsigmaI -\bfsigmah)(t)  := \errsigmaI(t) + \errsigmaA(t),\\
\erru(t) &:=\bfu(t) -\bfuh(t) = (\bfu -\bfuI)(t) +(\bfuI -\bfuh)(t)  := \erruI(t) + \erruA(t),\\
\errw(t) &:=\bfw(t) -\bfwh(t) = (\bfw -\bfwI)(t) +(\bfwI -\bfwh)(t)  := \errwI(t) + \errwA(t),\\
\errp(t) &:=p(t) -\ph(t)= (p - \pI)(t) +(\pI -\ph)(t)  := \errpI(t) + \errpA(t)
\end{aligned}
\end{equation*}
and derive the error equations for 
$(\errsigmaA , \erruA, \errwA, \errpA)\in\bs{\Sigma}_h\times\bfU_h\times\bfW_h\times Q_h$. 
First, from the definition of the VE interpolants, we infer that
\begin{equation*}
    \begin{aligned}
a_h(\errsigmaI, \bftauh)+ (\erruI, \bdiv \bftauh)_\Omega  &= 0 \,\, 
&&\forall \bftauh\in\bs{\Sigma}_h,\\
(\bdiv \errsigmaI, \bfvh)_\Omega &= 0\,\, &&\forall \bfvh\in\bfU_h,\\
c_h(\errwI, \bfzh)  - (\errpI, \div \bfzh)_\Omega &= 0\, &&\forall \bfzh\in\bfW_h,\\
(\partialt \errsigmaI, \alpha \mathcal{A}\bfI\qh)_\Omega+ (\div \errwI, \qh)_\Omega + ((s_0+\alpha^2 \kappa^{-1})\partialt \errpI, q_h)_{\Omega} &=  0 \, &&\forall \qh\in Q_h,
\end{aligned}
\end{equation*}
where the last identity follows from \eqref{eq:coupling_term_discrete} and \eqref{eq:projection:elasticity_rhs}. Thus, plugging together the previous system and \eqref{problem:semidiscrete_formulation}, we obtain for all $t\in (t_0,t_f]$
\begin{equation}\label{eq:error_equations}
\begin{aligned}
a_h(\errsigmaA(t), \bftauh)+ (\erruA(t), \bdiv \bftauh)_\Omega + (\alpha \mathcal{A}\bfI\errpA(t), \bftauh)_\Omega  &= R_1^t(\bftauh),\\
(\bdiv \errsigmaA(t), \bfvh)_\Omega &= 0,\\
c_h(\errwA(t), \bfzh)  - (\errpA(t), \div \bfzh)_\Omega &= R_2^t(\bfzh), \\
(\partialt \errsigmaA(t), \alpha \mathcal{A}\bfI\qh)_\Omega+ (\div \errwA(t), \qh)_\Omega + ((s_0+\alpha^2 \kappa^{-1})\partialt \errpA(t), q_h)_{\Omega} &=  0, 
\end{aligned}
\end{equation}
where, using \eqref{problem:variationalFormulation:stress} and \eqref{problem:variationalFormulation:velocity}, it is inferred that
$$
\begin{aligned}
R_1^t(\bftauh) &=  a_h(\bfsigma(t),\bftauh) + (\bfu(t),\bdiv \bftauh)_{\Omega} + (\alpha \mathcal{A}\bfI \pI(t), \bftauh)_{\Omega} -\langle\bfg_u(t), \bftauh\bfn\rangle_{\partial_u \Omega}  \\
&= a_h(\bfsigma(t),\bftauh) - (\mathcal{A}\bfsigma(t),\bftauh)_\Omega -
(\alpha \mathcal{A}\bfI \errpI(t), \bftauh)_{\Omega}
\qquad\qquad\qquad\text{ and } \\
R_2^t(\bfzh) &= c_h(\bfw(t), \bfzh) + (p(t), \div \bfzh)_{\Omega}- \langle g_p(t), \bfzh\cdot \bfn\rangle_{\partial_p \Omega} \\
&= c_h(\bfw(t), \bfzh) - (\bfK^{-1} \bfw(t), \bfzh)_{\Omega}.
\end{aligned}
$$
The following a priori error estimate is obtained by employing the arguments used in Theorem \ref{theorem:stability} to the error equations
\eqref{eq:error_equations}.
\begin{theorem}[Error estimate]
Let the mesh assumptions A1, A2 and A3, as well as the regularity hypotheses of Lemma \ref{lemma:interpolation_estimates} hold. For any $t\in (t_0, t_f]$, let $(\bfsigma(t),\bfu(t),\bfw(t),p(t))$ be the solution of the continuous problem~\eqref{problem:variationalFormulation} and $(\bfsigmah(t),\bfuh(t),\bfwh(t),\ph(t))\in\bs{\Sigma}_h\times\bfU_h\times\bfW_h\times Q_h$ be the unique solution of the semi-discrete problem~\eqref{problem:semidiscrete_formulation}. Then, the following error bound holds
\begin{equation*}
\begin{aligned}
&\sup_{t\in[t_0,t_f]}\norm[\Omega]{\erru(t)}^2 +
\sup_{t\in[t_0,t_f]}\norm[\boldsymbol{\Sigma}]{\errsigma(t)}^2 +
\int_{t_0}^{t_f}\norm[\Omega]{\errw(s)}^2\ds +
\int_{t_0}^{t_f}\norm[\Omega]{\errp(s)}^2 \ds\\
& \qquad
\leq 
Ch^2 (t_f-t_0) \left(\norm[H^1(\bbH^1(\Omega))]{ \bfsigma}^2  + \norm[H^1(H^1(\Omega))]{p}^2\right) + Ch^2\norm[L^2(\bfH^1(\Omega))]{ \bfw}^2\\ 
&\qquad\qquad +  Ch^2\norm[L^\infty(\bfH^1(\Omega))]{ \bfu}^2 + Ch^2\big(\norm[\bbH^1(\Omega)]{ \bfsigma(t_0)}^2  + \norm[H^1(\Omega)]{p(t_0)}^2\big).   
\end{aligned}
\end{equation*}
where the constant $C>0$ is independent of $h$, $\lambda$ and $s_0$.
\end{theorem}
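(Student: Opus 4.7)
The plan is to mirror the five-step strategy used for Theorem~\ref{theorem:stability}, but applied to the discrete error system~\eqref{eq:error_equations} for $(\errsigmaA,\erruA,\errwA,\errpA)\in\bs{\Sigma}_h\times\bfU_h\times\bfW_h\times Q_h$, treating the consistency residuals $R_1^t$ and $R_2^t$ as perturbations on the right-hand side. Once we control these discrete components, the triangle inequality together with the interpolation bounds of Lemma~\ref{lemma:interpolation_estimates} gives the claim for $\erru,\errsigma,\errw,\errp$.

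First I would differentiate the first equation of~\eqref{eq:error_equations} in time, test it with $\bftauh=\errsigmaA$, test the remaining three equations with $\bfvh=\partial_t\erruA$, $\bfzh=\errwA$, $\qh=\errpA$, respectively, and add everything up. The terms involving $\bdiv\errsigmaA$ and the cross couplings between $\errsigmaA$ and $\errpA$ cancel or rearrange exactly as in Step~1 of Theorem~\ref{theorem:stability}, yielding after time integration on $[t_0,t]$ an energy identity of the form
\begin{equation*}
\mathcal{E}_h(t)+2C_{v,*}\int_{t_0}^{t}\!\!\norm[\Omega]{\bfK^{-\onehalf}\errwA(s)}^2\ds
\le \mathcal{E}_h(t_0)
+2\int_{t_0}^t\!\!\bigl(\partial_t R_1^s(\errsigmaA(s))+R_2^s(\errwA(s))\bigr)\ds,
\end{equation*}
where $\mathcal{E}_h$ collects the squared terms of the left-hand side of~\eqref{eq:main_estimate_stability} written in terms of $\errsigmaA$ and $\errpA$. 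The initial energy $\mathcal{E}_h(t_0)$ is controlled by testing the elliptic-type initial system (the analogue of~\eqref{eq:discrete_initial} for the error) and using Lemma~\ref{lemma:interpolation_estimates}.

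Next I would bound the two residual terms. Since $R_1^t(\bftauh)=a_h(\bfsigma(t),\bftauh)-(\mathcal{A}\bfsigma(t),\bftauh)_\Omega-(\alpha\mathcal{A}\bfI\errpI(t),\bftauh)_\Omega$, Lemma~\ref{lemma:interpolation_estimates2} gives $|R_1^t(\bftauh)|\le Ch(\norm[\bbH^1(\Omega)]{\bfsigma(t)}+\norm[1,\Omega]{p(t)})\norm[\Omega]{\bftauh}$, and an analogous bound holds for $\partial_t R_1^t$ by the linearity of $a_h-(\mathcal{A}\cdot,\cdot)_\Omega$ in the first argument (provided $\bfsigma,p\in H^1$ in time with values in $\bbH^1,H^1$). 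Likewise Lemma~\ref{lemma:interpolation_estimates2} yields $|R_2^t(\bfzh)|\le Ch\norm[\bfH^1(\Omega)]{\bfw(t)}\norm[\Omega]{\bfzh}$. Cauchy--Schwarz and Young with small parameters $\varepsilon_i>0$ then absorb $\errsigmaA$ and $\errwA$ into the left-hand side while leaving only $h^2$ times squared norms of the data.

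For the $L^2$-bounds on $\erruA$ and $\errpA$ and the $\bfH(\rmdiv)$-bound on $\errsigmaA$ I would reuse Steps~2 and~3 of Theorem~\ref{theorem:stability} verbatim: the discrete inf-sup conditions of \cite{Dassi-Lovadina-Visinoni:2020} and \cite{Brezzi-Falk-Marini:2014} applied to the first and third error equations produce
\begin{equation*}
\beta_{el}\norm[\Omega]{\erruA}\le |R_1^t(\bftauh)|/\norm[\bs\Sigma]{\bftauh}+\text{(energy-like terms)},\qquad
\beta_f\norm[\Omega]{\errpA}\le |R_2^t(\bfzh)|/\norm[\bfW]{\bfzh}+\text{(energy-like terms)},
\end{equation*}
and the $\bdiv\errsigmaA=0$ identity combined with the $\mathrm{dev}$--$\mathrm{div}$ Lemma bounds the stress divergence. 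Choosing the Young parameters exactly as in Step~5 of Theorem~\ref{theorem:stability} closes the estimate for the discrete components.

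Finally I would invoke the triangle inequality $\norm{\erru}\le\norm{\erruI}+\norm{\erruA}$ and likewise for the other three fields, applying Lemma~\ref{lemma:interpolation_estimates} (and its time-integrated counterpart when the norm is Bochner in time) to bound the interpolation pieces by $Ch$ times the stated $H^1$-in-space norms of the exact solution. The main obstacle is the time-differentiated consistency residual $\partial_t R_1^t$: one must check that the estimate of Lemma~\ref{lemma:interpolation_estimates2} passes to $\partial_t\bfsigma$ and $\partial_t p$ in a robust way (independent of $\lambda$ and $s_0$), which is the reason for requiring $\bfsigma\in H^1(\bbH^1(\Omega))$ and $p\in H^1(H^1(\Omega))$ in the hypotheses, and the reason why $(t_f-t_0)$ multiplies these two norms in the final bound, exactly as $(t_f-t_0)\mathcal{C}(\cdot)$ appears in Theorem~\ref{theorem:stability}.
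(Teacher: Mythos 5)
Your proposal is correct and follows essentially the same route as the paper: differentiate the first error equation in time, run the energy argument of Theorem~\ref{theorem:stability} on the discrete error components with $R_1^t$, $\partial_t R_1^t$, $R_2^t$ bounded through Lemmas~\ref{lemma:interpolation_estimates}--\ref{lemma:interpolation_estimates2}, recover the full norms via the inf-sup and $\mathrm{dev}$--$\mathrm{div}$ arguments of Steps 2--3, and conclude by the triangle inequality with the interpolation estimates. The only cosmetic difference is that the paper keeps the initial-time contribution as the residual term $R_1^{t_0}(\errsigmaA)$ and bounds it directly, rather than testing the initial system as you suggest, but both devices yield the same $h^2$-bound on the data at $t_0$.
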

\begin{proof}
We differentiate the first equation of~\eqref{eq:error_equations} with respect to time, and take as test functions $\bftauh=\errsigmaA$, $\bfvh = \partial_t \erruA$, $\bfzh = \errwA$, and $\qh = \errpA$. Reasoning as in the first step of the proof of Theorem~\ref{theorem:stability}, we readily infer
\begin{equation}\label{eq:main_estimate_convergence}
\begin{aligned}
&\sum_{E\in\Th} \left(
\norm[E]{(2\mu)^{-\onehalf} \bdev(\bs{\Pi}_{\rm s}^E\errsigmaA(t))}^2+
 \gamma_{1,*}\norm[E]{(2\mu)^{-\onehalf}(\bfI -\bs{\Pi}_{\rm s}^E) \errsigmaA(t)}^2
\right)
+\\
&
\sum_{E\in\Th}\norm[E]{\kappa^{-\onehalf}    \left(3^{-1}\tr(\bs{\Pi}_{\rm s}^E\errsigmaA(t))+ \alpha \errpA(t) \right)}^2
+\norm[\Omega]{s_0^{\onehalf} \errpA(t)}^2 
+\\
&
\;\; C_{v,*}\int_{t_0}^t\norm[\Omega]{\bfK^{-\onehalf}\errwA(s)}^2\ds
\leq C_0 \hspace{-1mm}\left(\int_{t_0}^t \hspace{-1mm} \partialt R_1^s(\errsigmaA) \hspace{-0.5mm} + \hspace{-0.5mm} R_2^s(\errwA) \ \ds
+ R_1^{t_0}(\errsigmaA)\hspace{-1mm} \right) \hspace{-1mm},     
    \end{aligned}
\end{equation}
with $C_0>0$ independent of the mesh size $h$.
Using Lemma~\ref{lemma:interpolation_estimates} and Lemma~\ref{lemma:interpolation_estimates2} followed by the Cauchy-Schwarz and Young inequalities, we get
\begin{equation*}
   \begin{aligned}
        \int_{t_0}^t \partialt R_1^s(\errsigmaA)&~\ds 
        \leq  \tilde{C}_1 \int_{t_0}^t h \norm[\bbH^1(\Omega)]{\partialt \bfsigma(s)} \norm[\Omega]{\errsigmaA(s)} + \norm[\Omega]{\partialt \errpI(s)} \norm[\Omega]{\errsigmaA(s)}~ds \\
        \leq & C_1 h \int_{t_0}^t \left( \norm[\bbH^1(\Omega)]{\partialt \bfsigma(s)}  + \norm[H^1(\Omega)]{\partialt p(s)}\right) \norm[\Omega]{\errsigmaA(s)}~ds\\
        \leq & \frac{C_1 h^2(t-t_0)}{4 \varepsilon_1}\left( \norm[H^1(\bbH^1(\Omega))]{ \bfsigma}^2  + \norm[H^1(H^1(\Omega))]{p}^2\right) + 
         \varepsilon_1\sup_{s\in[t_0,t]}\norm[\Omega]{\errsigmaA(s)}^2.
   \end{aligned}
\end{equation*}
Similarly, we bound the second term in the right-hand side of \eqref{eq:main_estimate_convergence} as
\begin{equation*}
\begin{aligned}
  \int_{t_0}^t R_2^s(\errwA)~\ds 
        &\leq  C_2 h \int_{t_0}^t \norm[\bfH^1(\Omega)]{ \bfw(s)} \norm[\Omega]{\errwA(s)}~ds \\
        &\leq \frac{C_2 h^2}{4 \varepsilon_2} \norm[L^2(\bfH^1(\Omega))]{ \bfw}^2 + 
         \varepsilon_2\int_{t_0}^t\norm[\Omega]{\errwA(s)}^2~\ds.
\end{aligned}
\end{equation*}
Finally, the last term in the right-hand side of~\eqref{eq:main_estimate_convergence} is bounded as follows:
\begin{equation*}
   \begin{aligned}
         R_1^{t_0}(\errsigmaA)& 
        \leq  \frac{C_3 h^2}{4 \varepsilon_1}\left( \norm[\bbH^1(\Omega)]{ \bfsigma(t_0)}^2  + \norm[H^1(\Omega)]{p(t_0)}^2\right) + 
         \varepsilon_1\norm[\Omega]{\errsigmaA(t_0)}^2.
   \end{aligned}
\end{equation*}
The constants $\tilde{C}_1$, $C_1$, $C_2$ and $C_3$ are positive and independent of mesh size, $\lambda$ and $s_0$. Proceeding as in the second and third steps of the proof of the stability estimate in Theorem \ref{theorem:stability}, selecting the proper values for $\varepsilon_1$ and $\varepsilon_2$, and taking the supremum over $[t_0,t_f]$, we derive 
\begin{equation*}
\begin{aligned}
&\sup_{t\in[t_0,t_f]}\norm[\Omega]{\erruA(t)}^2 +
\sup_{t\in[t_0,t_f]}\norm[\boldsymbol{\Sigma}]{\errsigmaA(t)}^2 +
\int_{t_0}^{t_f}\norm[\Omega]{\errwA(s)}^2\ds +
\int_{t_0}^{t_f}\norm[\Omega]{\errpA(s)}^2 \ds\\
& \qquad
\leq 
Ch^2 (t_f-t_0) \left(\norm[H^1(\bbH^1(\Omega))]{ \bfsigma}^2  + \norm[H^1(H^1(\Omega))]{p}^2\right) + Ch^2\norm[L^2(\bfH^1(\Omega))]{ \bfw}^2 \\
&\qquad\qquad + Ch^2\big(\norm[\bbH^1(\Omega)]{ \bfsigma(t_0)}^2  + \norm[H^1(\Omega)]{p(t_0)}^2\big).   
\end{aligned}
\end{equation*}
The conclusion follows by applying the triangle inequality and Lemma~\ref{lemma:interpolation_estimates}.
\end{proof}

\section{Numerical Tests} 
\label{sec:numerical}
\graphicspath{{Figures/}}
In this section, we present the numerical tests conducted to verify the performance of the proposed method, providing numerical evidence to support the theoretical aspects discussed in the previous section. We begin by introducing the time discretization used in the tests, see Subsection~\ref{subsection:time-discretization}, then focus on assessing the convergence of the method, see Subsection~\ref{subsection:convergence-results}, and finally apply our VE discretization to a more realistic scenario: the footing problem, see Subsection~\ref{subsection:footing-step}.
The implementation is based on the C++ library Vem++~\cite{dassi2023vem}.

\subsection{Time discretization} \label{subsection:time-discretization}
Let $N$ be a positive integer, and let us define the time interval $\Delta t = \frac{t_f - t_0}{N}$ such that each time instant can be written as $t_n = t_0 + n \Delta t$, for $n=0,\dots,N$. Since we are considering the lowest order for spatial discretization, we exploit the backward Euler time-stepping  method for the time discretization.

The fully discrete approximation of~\eqref{problem:variationalFormulation} reads: 
given $(\bfsigma_h^n, \bfu_h^n, \bfw_h^n, p_h^n)\in\bs{\Sigma}_h\times\bfU_h\times \bfW_h\times Q_h$ at time $t_n$, find $(\bfsigma_h^{n+1}, \bfu_h^{n+1}, \bfw_h^{n+1}, p_h^{n+1})\in\bs{\Sigma}_h \times \bfU_h\times \bfW_h\times Q_h$ such that 
{
\small
\begin{equation}\label{problem:fully_discrete_formulation}
\begin{aligned}
a_h(\bfsigmah^{n+1}, \bftauh)
+
\left(
\bfu_h^{n+1},\bdiv \bftauh
\right)_\Omega 
+ 
\left(
\alpha\mathcal{A}\bfI \ph^{n+1} , \bftauh
\right)_\Omega 
&= 
\langle 
\bfg_u^{n+1}, \bftauh\bfn
\rangle_{\partial_u \Omega} \\
-\left(
\bdiv \bfsigmah^{n+1}, \bfvh
\right)_\Omega 
&= (\bfb^{n+1},\bfvh)_\Omega\\
c_h(\bfwh^{n+1},\bfzh)
- \left(\ph^{n+1}, \div\bfzh\right)_\Omega 
&= 
\langle g_{p}^{n+1}, \bfzh \cdot \bfn\rangle_{\partial_{p} \Omega}\\
\left(\bfsigmah^{n+1} , \alpha \mathcal{A}\bfI\qh \right)_\Omega + \Delta t(\div\bfwh^{n+1}, \qh)_\Omega +\left((s_0+\alpha^2\kappa^{-1})\ph^{n+1}, \qh\right)_\Omega &=\\
\Delta t(\psi^{n+1}, \qh)_\Omega +
\left(\bfsigmah^{n} , \alpha \mathcal{A}\bfI\qh \right)_\Omega +
\left((s_0+\alpha^2\kappa^{-1})\ph^{n}, \qh\right)_\Omega
\end{aligned}
\end{equation}
}
for all $\bftauh \in \bs{\Sigma}_h$, $\bfvh \in \bfU_h$, $\bfzh \in \bfW_h$, $\qh \in Q_h$. Therefore, the system can be written in matrix form as follows:
\begin{equation*}
    \left[\begin{array}{cccc}
    \bfA & \bfE^T & 0 & \bfA_I\\
    -\bfE & 0 & 0 & 0 \\
    0 & 0 & \bfM_u & -\bfB^T\\
    \bfA_I & 0 & \Delta t \bfB & \bfM_p+ \bfA_{II}
    \end{array}\right]
    \left[\begin{array}{c} 
    \bfsigma_h^{n+1}\\
    \bfuh^{n+1}\\
    \bfwh^{n+1}\\ 
    \ph^{n+1}
    \end{array}\right]
    =
    \left[\begin{array}{c} 
    \bfg_u\\
    \bfb \\
    \bfg_p\\
    \bfh
    \end{array}\right]
\end{equation*}
where $\bfh = (\bfM_p+\bfA_{II}) {p}_h^{n} + \bs{\psi}\Delta t+ \bfA_I \bfsigma_{h}^{n}$.
In this four-fields formulation we can observe the saddle problem structure of the two individual sub-problems, and a coupling which is symmetric and consists of the $\mathbf{A}_I$ block, unlike the three fields formulation where the coupling term is skew symmetric.

\subsection{Convergence results} \label{subsection:convergence-results}
We consider the unite cube $\Omega=[0,1]^3$ as the domain of our problem and we use the following four different tessellations, see Figure~\ref{fig:meshes}:
\begin{itemize}
	\item \texttt{Cube}, a uniform hexahedral mesh; 
	\item \texttt{Tetra}, a Delaunay tetrahedral mesh generated by \texttt{tetgen}~\cite{tetgen};
	\item \texttt{CVT}, a Voronoi tessellation optimized via Lloyd algorithm, obtained by the library \texttt{voro++} \cite{Voro++}; 
	\item \texttt{Rand}, a Voronoi tessellation achieved with random control points and without optimization.
\end{itemize}

\begin{figure}[!ht]
	\centering
	\subfloat{\includegraphics[width=0.25\textwidth]{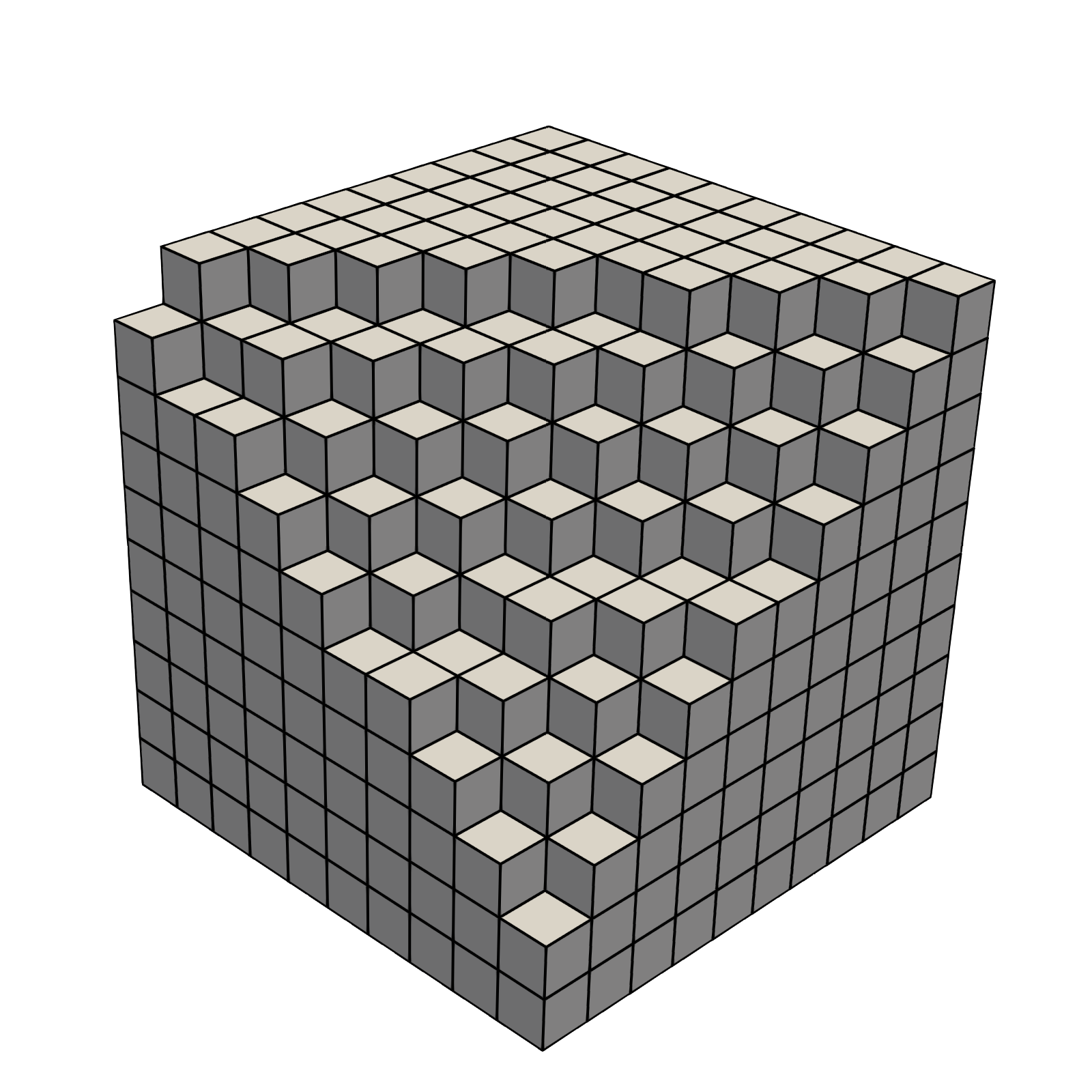}}
	\subfloat{\includegraphics[width=0.25\textwidth]{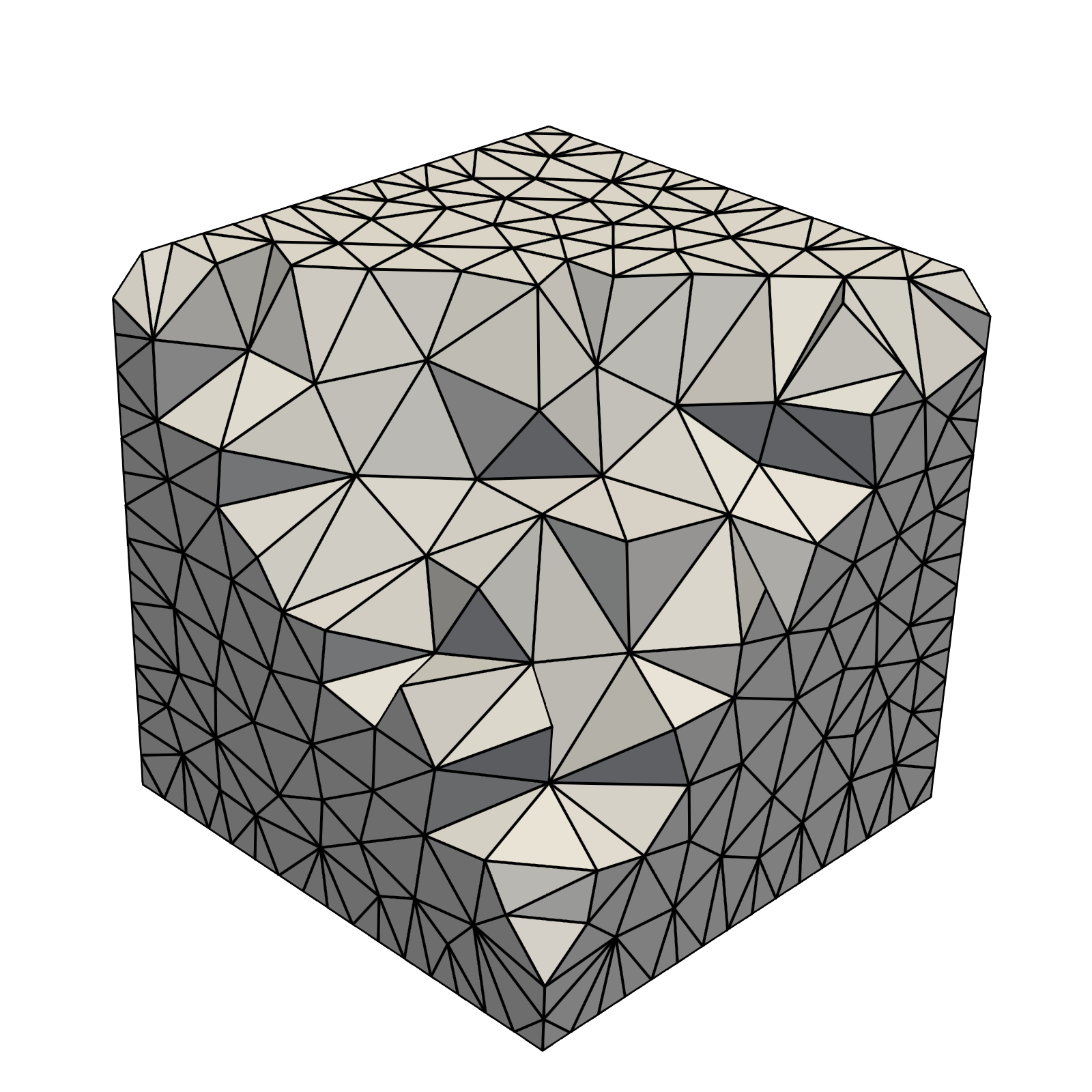}}
	\subfloat{\includegraphics[width=0.25\textwidth]{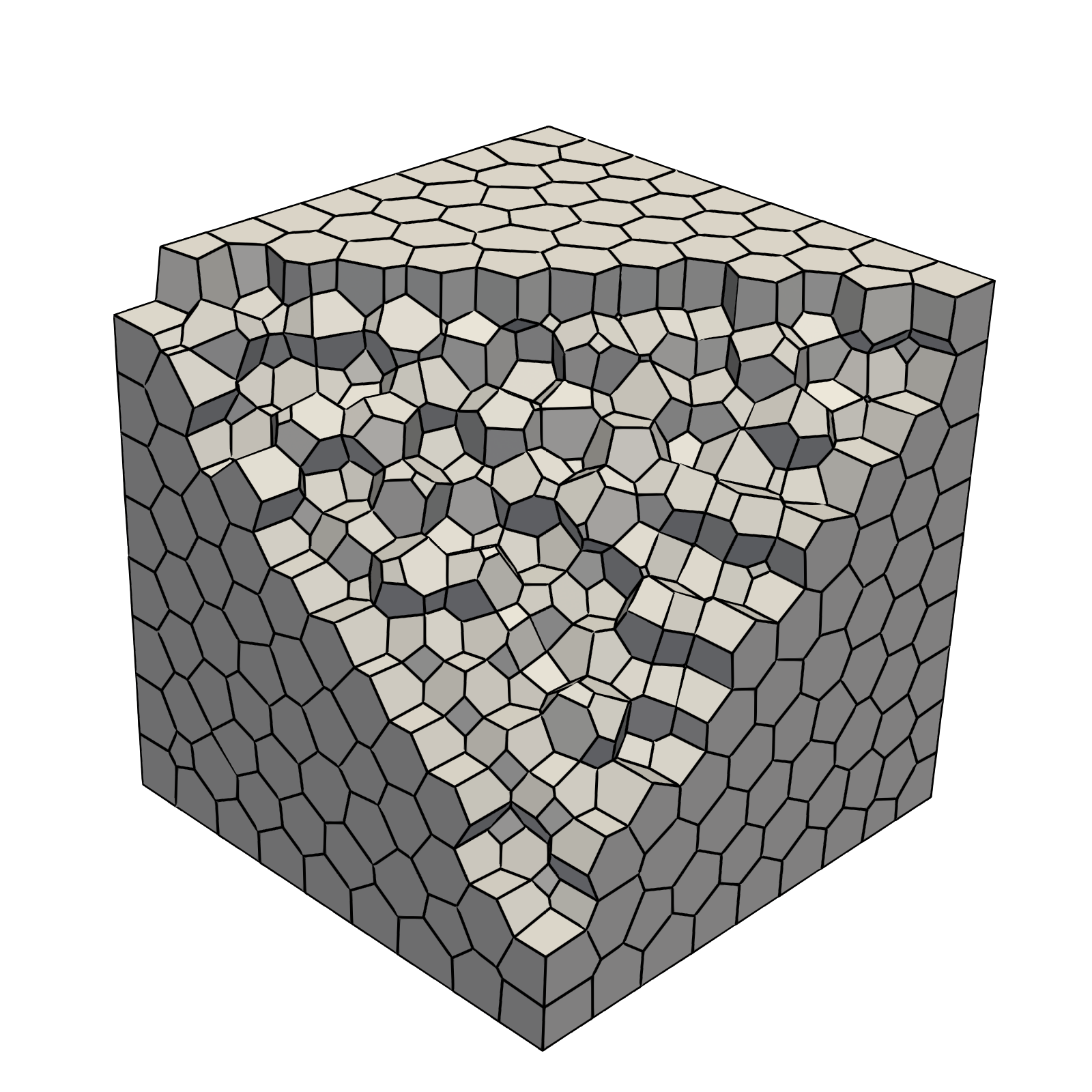}}
	\subfloat{\includegraphics[width=0.25\textwidth]{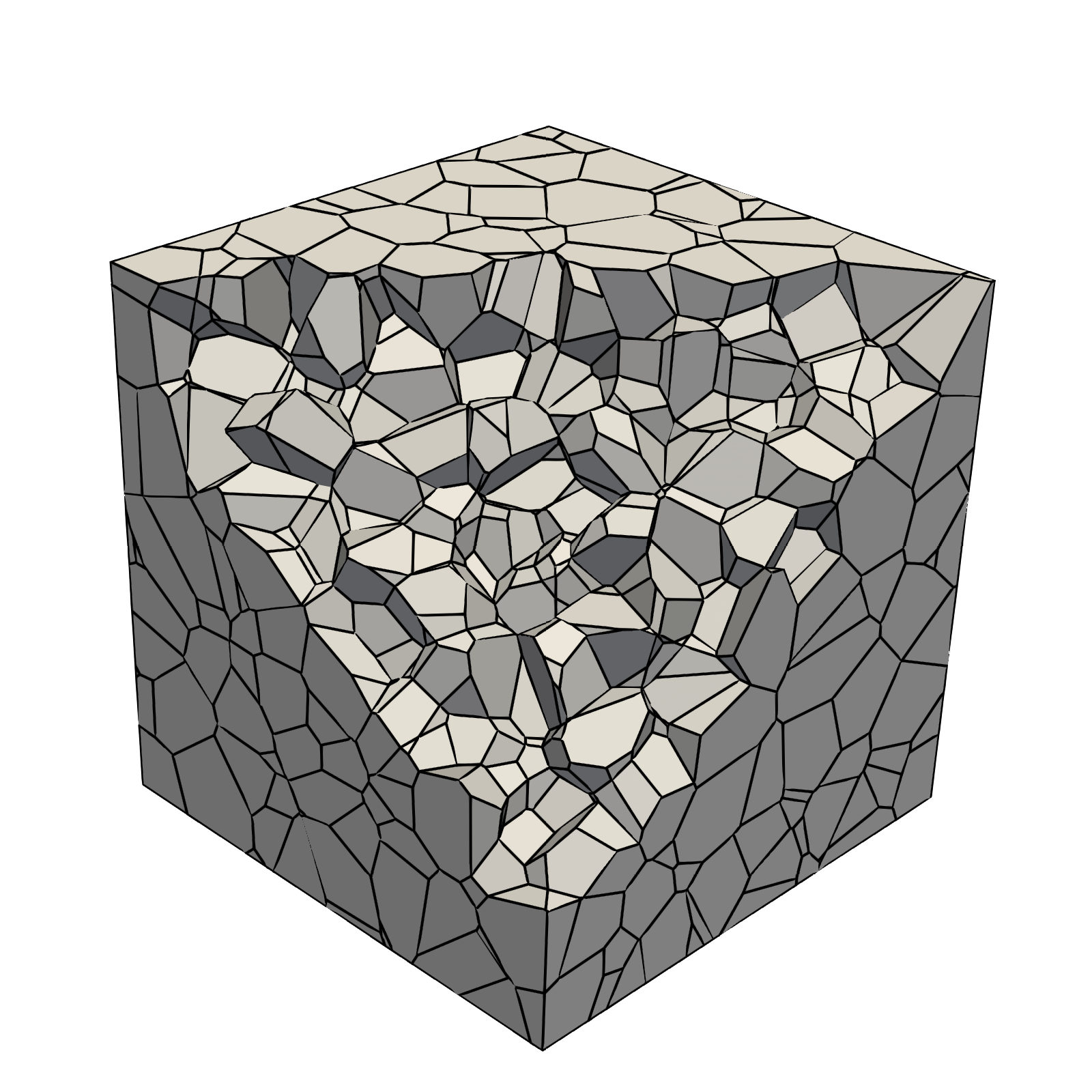}}
	\caption{Overview of adopted meshes for convergence assessment numerical tests.}
	\label{fig:meshes}
\end{figure}

As we can see, the meshes above exhibit two levels of complexity. The first two, \texttt{Cube} and \texttt{Tetra}, are composed of high quality elements and represent a standard choice for a Galerkin method. The two latter instead, \texttt{CVT} and \texttt{Rand}, present an interesting challenge for the robustness of our approach, as they are characterized by elements with some small edges and small faces. For each type of mesh, we estimate the mesh-size as
\begin{equation*}
	h=\frac{1}{N_E}\sum_{E\in\Th} h_E
\end{equation*}
where $N_E$ is the number of the elements in the mesh $\Th$ and $h_E$ is the diameter of the polyhedron $E$.
To assess the spatial accuracy of the method, we take a suitably small time interval $\Delta t$ and compute the following relative errors on a sequence of successively refined meshes:
\begin{equation}\label{eq:L2errors}
\begin{aligned}
&E_{\bfu}:=
\dfrac{\norm[\Omega]{\bfu - \bfuh}}{\norm[\Omega]{\bfu}},	
\qquad 
&E_{\bfsigma, \bs{\Pi}}:=
\dfrac{\left(\sum_{E\in\Th} \norm[E]{\bfsigma -   \bs{\Pi}^{E}_s\bfsigma_h}^2\right)^{\onehalf}}{\norm[\Omega]{\bfsigma}},\\	
&E_{p} :=\dfrac{\norm[\Omega]{p - \ph}}{\norm[\Omega]{p}},
\qquad 
&E_{\bfw, \bs{\Pi}} :=
\dfrac{\left(\sum_{E\in\Th} \norm[E]{\bfw - \bs{\Pi}^{E}\bfwh}^2\right)^{\onehalf}}{\norm[\Omega]{\bfw}},
\end{aligned}    
\end{equation}
where, for simplicity, we decide to consider the discrete solution $(\bfuh, \bfsigmah, \bfwh, \ph)$ at the final time $t_f$. We recall that according to the theory all the above quantities behave as $O(h)$.

\paragraph*{Test~1: a compressible material}
In this first test, we consider the following manufactured solution of our problem:
\begin{align*}
\bfu = \left( \phi(\bfx,t), \phi(\bfx,t), \phi(\bfx,t) \right)^T, 
\qquad \qquad
p=\phi(\bfx,t),
\end{align*}
where $\phi(\bfx,t)=-xyz(x-1)(y-1)(z-1)(e^t-1)$.
The loading term $\bfb$, the fluid source $\psi$, the boundary conditions, and the initial conditions are computed according to the solution described above. In particular, we consider a problem with pressure and displacement enforced on the whole boundary.
The problem is fully characterized after specifying the following model coefficients
\begin{equation*}
    \alpha=1, \qquad s_0=0.002, \qquad \bfK = \bfI, \qquad  \lambda =1, \qquad \mu =1
\end{equation*}
and time data
\begin{equation*}
    t_{0}=0,\qquad  t_{f}=0.2, \qquad  \Delta t = 0.01.
\end{equation*}
Note that in this case the material is compressible since $\lambda\sim\mu$.

\begin{figure}[!ht]
\centering
\subfloat{\includegraphics[width=0.4\textwidth]{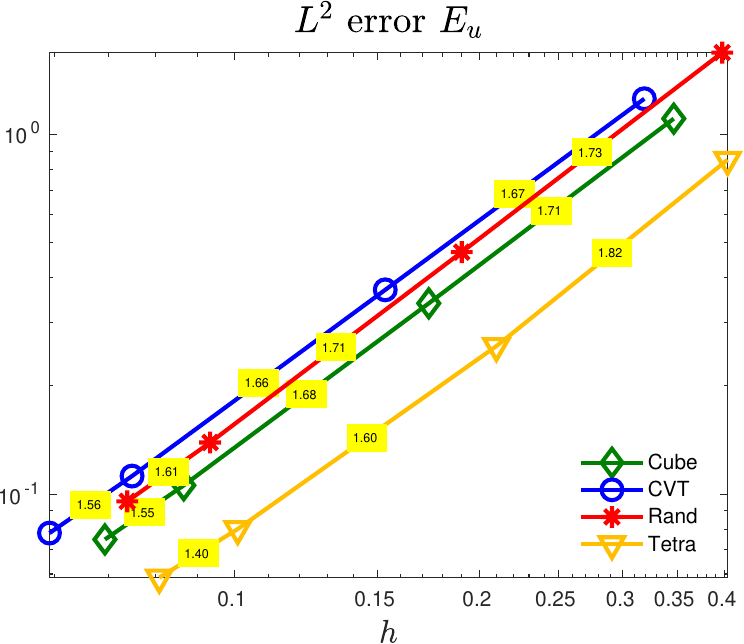}}\quad
\subfloat{\includegraphics[width=0.4\textwidth]{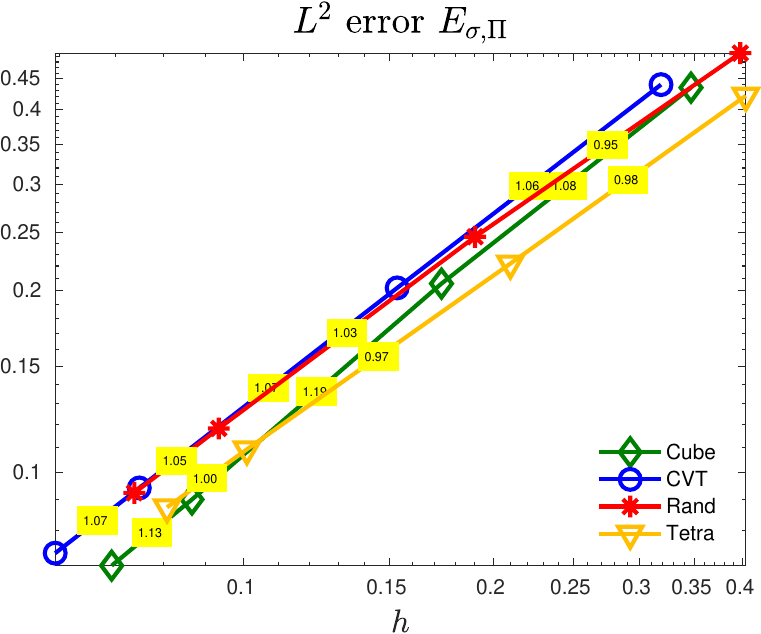}}\\
\subfloat{\includegraphics[width=0.4\textwidth]{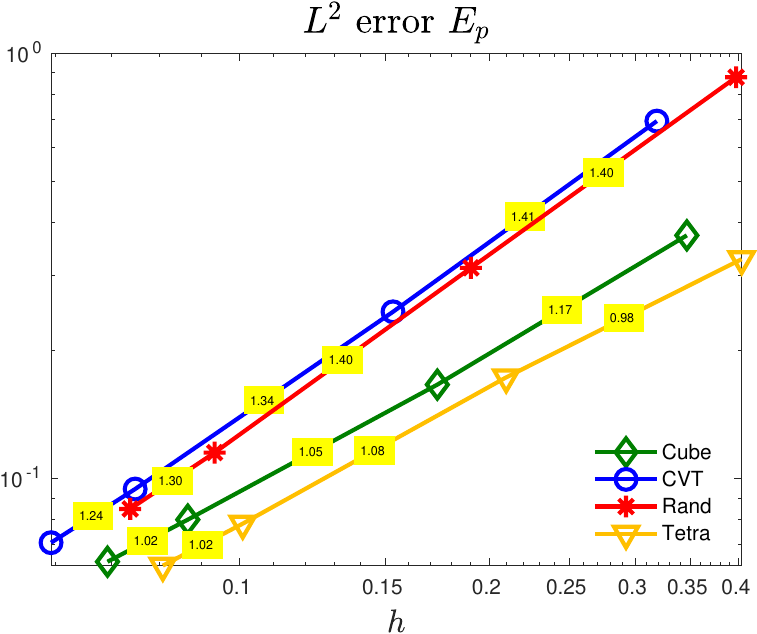}}\quad
\subfloat{\includegraphics[width=0.4\textwidth]{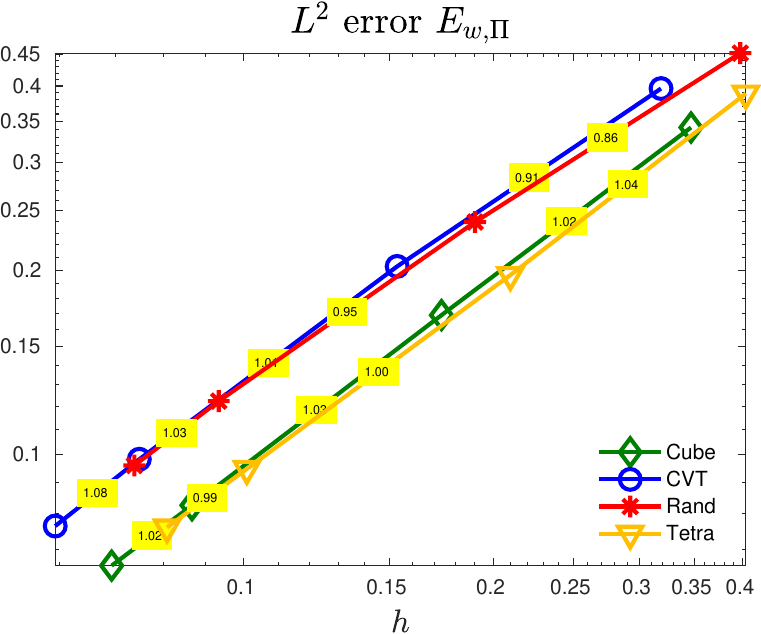}}
\caption{Test 1: a compressible material.}~\label{fig:test1}
\end{figure}

In Figure~\ref{fig:test1} we report the convergence lines for all the errors. The rates of convergence are computed as 
\begin{equation*}
    \text{rate} = \frac{\log(E_\star/\tilde{E}_\star)}{\log(h/\tilde{h})}
\end{equation*}
where $E_\star$, $\tilde{E}_\star$ denote one of the four errors defined in~\eqref{eq:L2errors} generated on two consecutive meshes of size $h$ and $\tilde{h}$, respectively. As we can notice, for the errors $E_{\bfw,\Pi}$ and $E_{\bfsigma,\Pi}$ the convergence order is approximately 1; for the error $E_{p}$ using \texttt{Cube} and \texttt{Tetra} meshes, the convergence rate is close to 1, while using \texttt{CVT} and \texttt{Rand} meshes the convergence order appears larger than~1; the same happens to $E_{\bfu}$ for all meshes. This fact could be related to the preasymptotic regime, namely the correct order for these unknowns is recovered refining the mesh.

\paragraph*{Test~2: a nearly incompressible material with null storage coefficient}
In this second example, we consider the model parameters and time data as in the previous test except for the following coefficients:
\begin{equation*}
\lambda = 10^6, \quad s_0=0.
\end{equation*}
For the estimate, we consider the following manufactured solution
\begin{equation*}
\begin{aligned}
&\bfu =
    \begin{pmatrix}
    e^{-t}(\cos(2\pi y)\sin(2\pi x)\sin(2\pi z)-\cos(2\pi z)\sin(2\pi x)\sin(2\pi y))+\frac{x^2 e^{-t}}{\lambda+\mu}\\
    e^{-t}(\cos(2\pi z)\sin(2\pi x)\sin(2\pi y)-cos(2\pi x)\sin(2\pi y)\sin(2\pi z))+\frac{y^2 e^{-t}}{\lambda+\mu}\\
    e^{-t}(cos(2\pi x)\sin(2\pi y)\sin(2\pi z)-\cos(2\pi y)\sin(2\pi x)\sin(2\pi z))+\frac{z^2 e^{-t}}{\lambda+\mu}
    \end{pmatrix}\\
&p=e^{-t}\sin(\pi x)\sin(\pi y)\sin(\pi z);
\end{aligned}
\end{equation*}

\begin{figure}[!ht]
\centering
\subfloat{\includegraphics[width=0.4\textwidth]{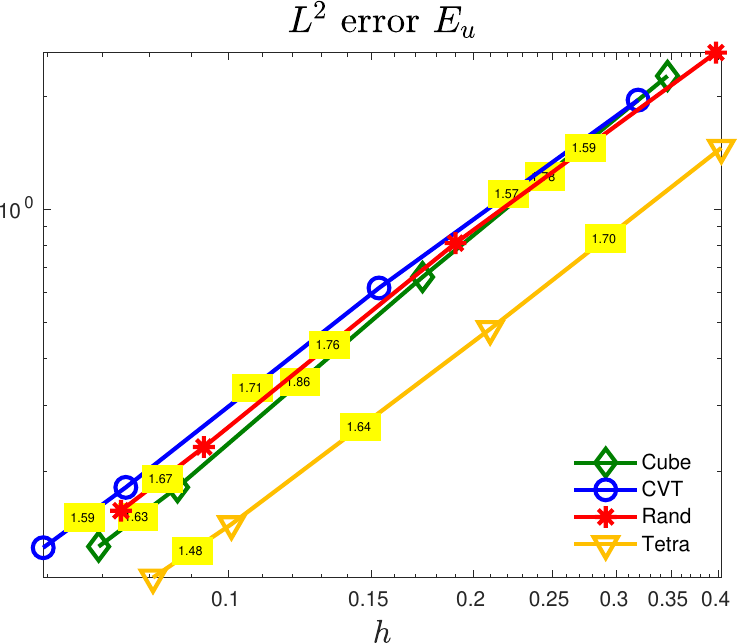}}\quad
\subfloat{\includegraphics[width=0.4\textwidth]{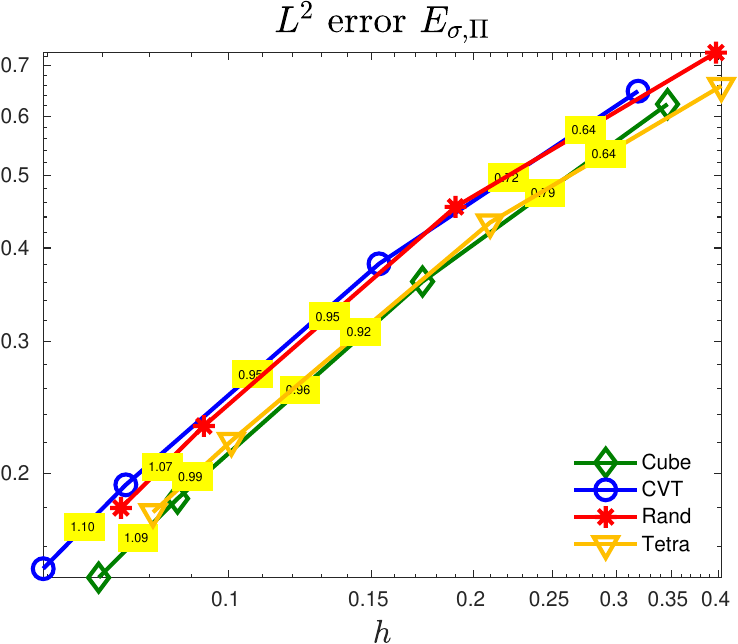}}\\
\subfloat{\includegraphics[width=0.4\textwidth]{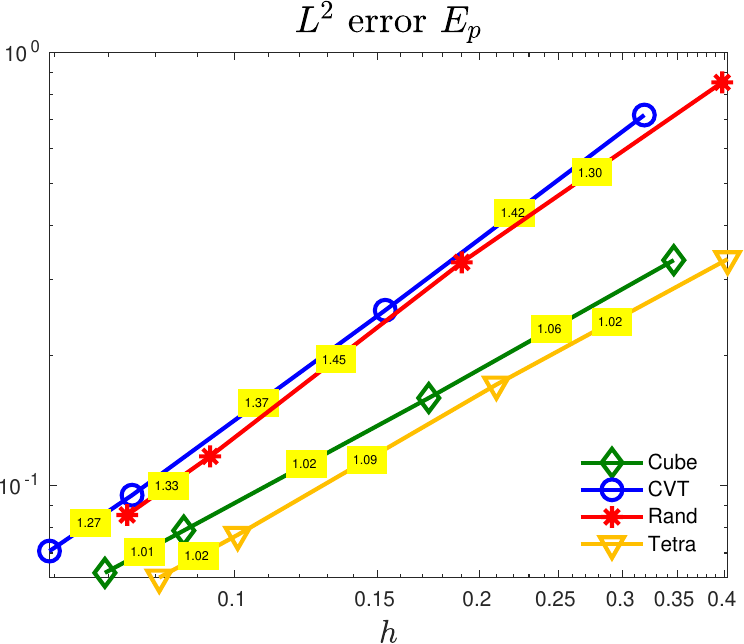}}\quad
\subfloat{\includegraphics[width=0.4\textwidth]{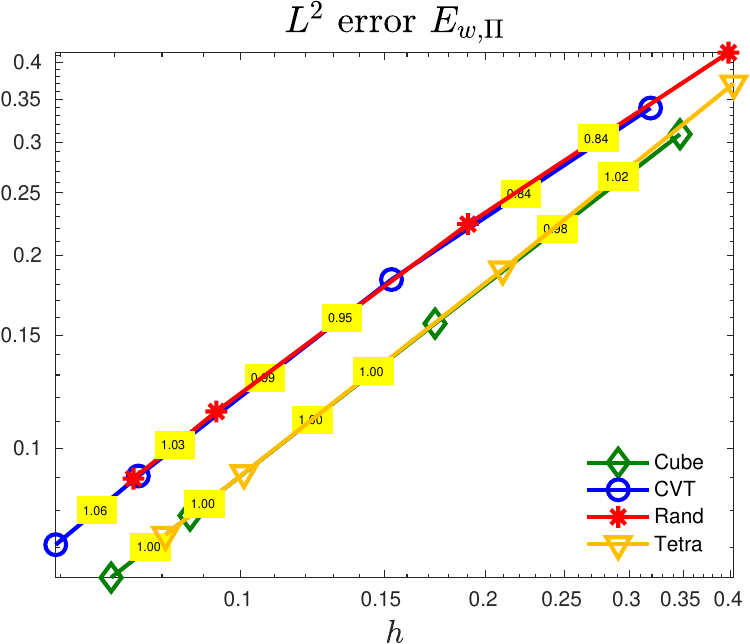}}
\caption{Test~2: a nearly-incompressible material with null storage coefficient.}~\label{fig:test2}
\end{figure}

In Figure~\ref{fig:test2} we report the convergence results for the proposed VEM approach in the limit setting ($\lambda \gg 0, s_0=0)$. As expected, for the considered method, the asymptotic convergence rate is approximately equal to~1 for all error norms 
and meshes and the results are quite similar to the previous test, especially for the errors $E_{\bfu}$ and $E_p$, confirming the robustness of the method for a wide range of material parameters.

\subsection{A footing problem} \label{subsection:footing-step}
In this example, we examine a 3D footing problem, proposed in~\cite{Gaspar-Lisbona-Oosterlee-Vabishchevich_2007,Botti-DiPietro-LeMaitre-Sochala_2019, Murad-Loula_1994} for the two-dimensional case. We consider the unit cube domain $\Omega=[0,1]^3$, assuming that it is free to drain on all faces, with its bottom and lateral faces fixed. We denote $\Gamma_1=\left\{ \bfx:=(x_1,x_2,x_3) \in \partial \Omega : x_3 =1\right\}$ as the upper boundary face and apply a uniform load force on its central portion, $\Gamma_{2}:=\left\{ \bfx \in\Gamma_1 \  : \  (x_1,x_2) \in [0.3,0.7]^2 \right\}$ simulating a footing step compressing the medium, see Figure~\ref{fig:SchemaFooting}.

\begin{figure}[!ht] 
    \centering
        \includegraphics[width=0.40\textwidth]{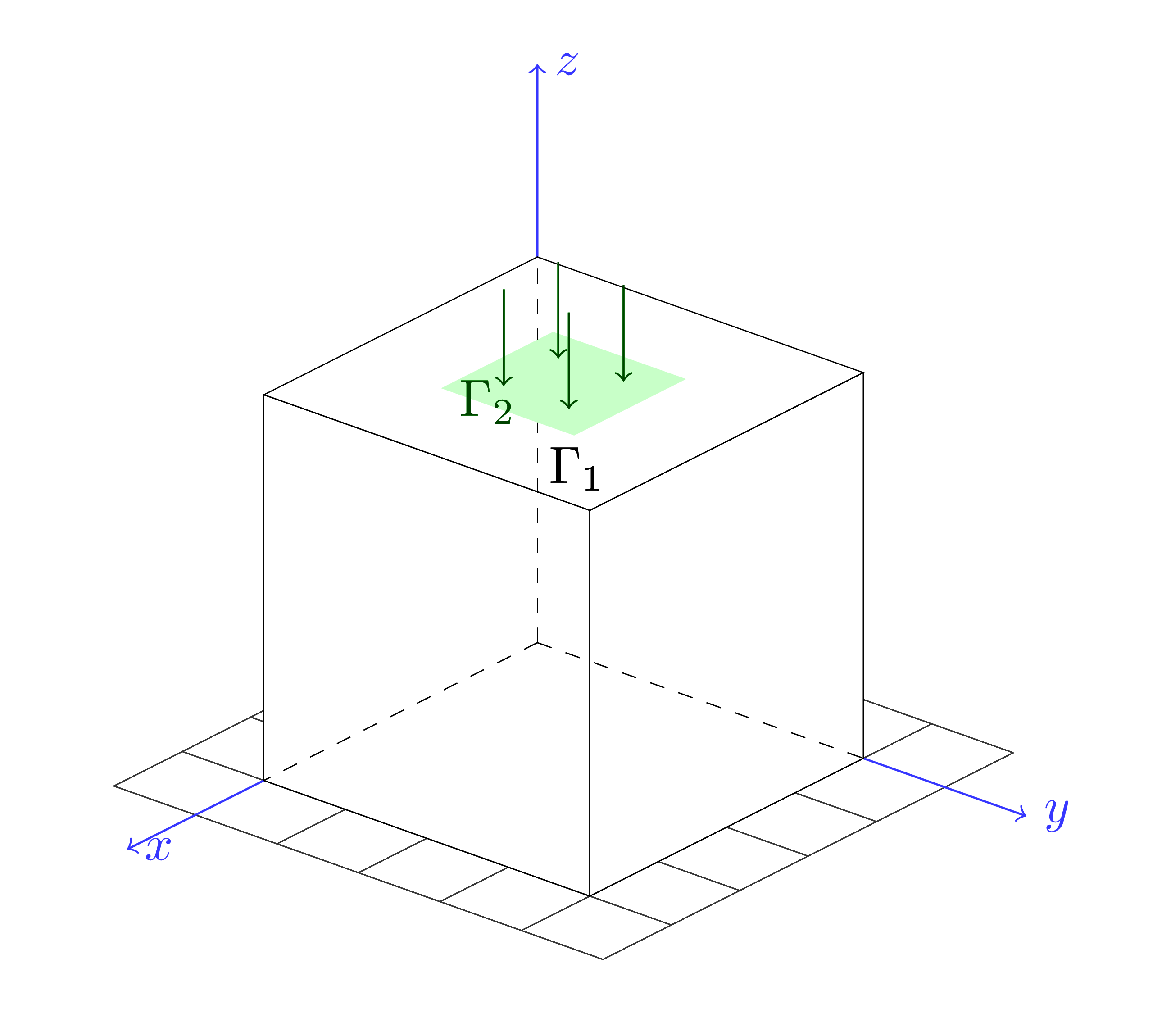}
   	\caption{Scheme for poroelasticity footing: $\Gamma_1$ is the upper boundary face and $\Gamma_2$ (the green area) is the area where we apply the uniform load force (green arrows).}\label{fig:SchemaFooting}
\end{figure}
In this case, we take the loading term $\bff$ and the initial conditions equal to zero so the solution is determined only by the following boundary conditions:
\begin{equation*}
	\begin{cases}
		\bfsigma \bfn  = (0,0,-5\, \text{kPa}), & \text{ on } \Gamma_{2}:=\left\{ \bfx \in\Gamma_1 \  : \  (x_1,x_2) \in [0.3,0.7]^2 \right\} ,\\
		\bfsigma \bfn  = \bfzero, & \text{ on } \Gamma_1 \setminus \Gamma_{2},\\
		\bfu = \bfzero, & \text{ on }\partial \Omega \setminus \Gamma_1,\\
		p= 0, & \text{ on } \partial \Omega.
	\end{cases}
\end{equation*}
We discretize the domain using a tetrahedral mesh with 40694 elements. Here, we focus our attention on the pressure profile at early times, considering $\Delta t=0.01$ and $t_f=0.2s$. The material properties of the porous medium are given in Table~\ref{table:material_properties}.
\begin{table}
	\begin{center}
		\begin{tabular}{p{5cm} p{1.5cm}p{1.7cm}}
			\hline 
			Property & Value & Unit\\
			\hline
			{Young’s modulus} &  {$3\times10^4$} &{$\text{Pa}$}\\
			{Poisson’s ratio} &  {0.2} &{ }\\
            {Permeability tensor $\bfK$}  &  {$10^{-4}$ } &{$\text{m}^2/\text{(Pa\ s)}$}\\
            {Biot--Willis coefficient $\alpha$} &  {1} &{}\\
			{Storage coefficient $s_0$} &  {0.002} &{$1/\text{Pa}$}\\
			\hline
		\end{tabular}
		\caption{Material parameters}\label{table:material_properties}
	\end{center}
\end{table}

\begin{figure}[!ht]
    \centering
        \includegraphics[width=0.3\textwidth]{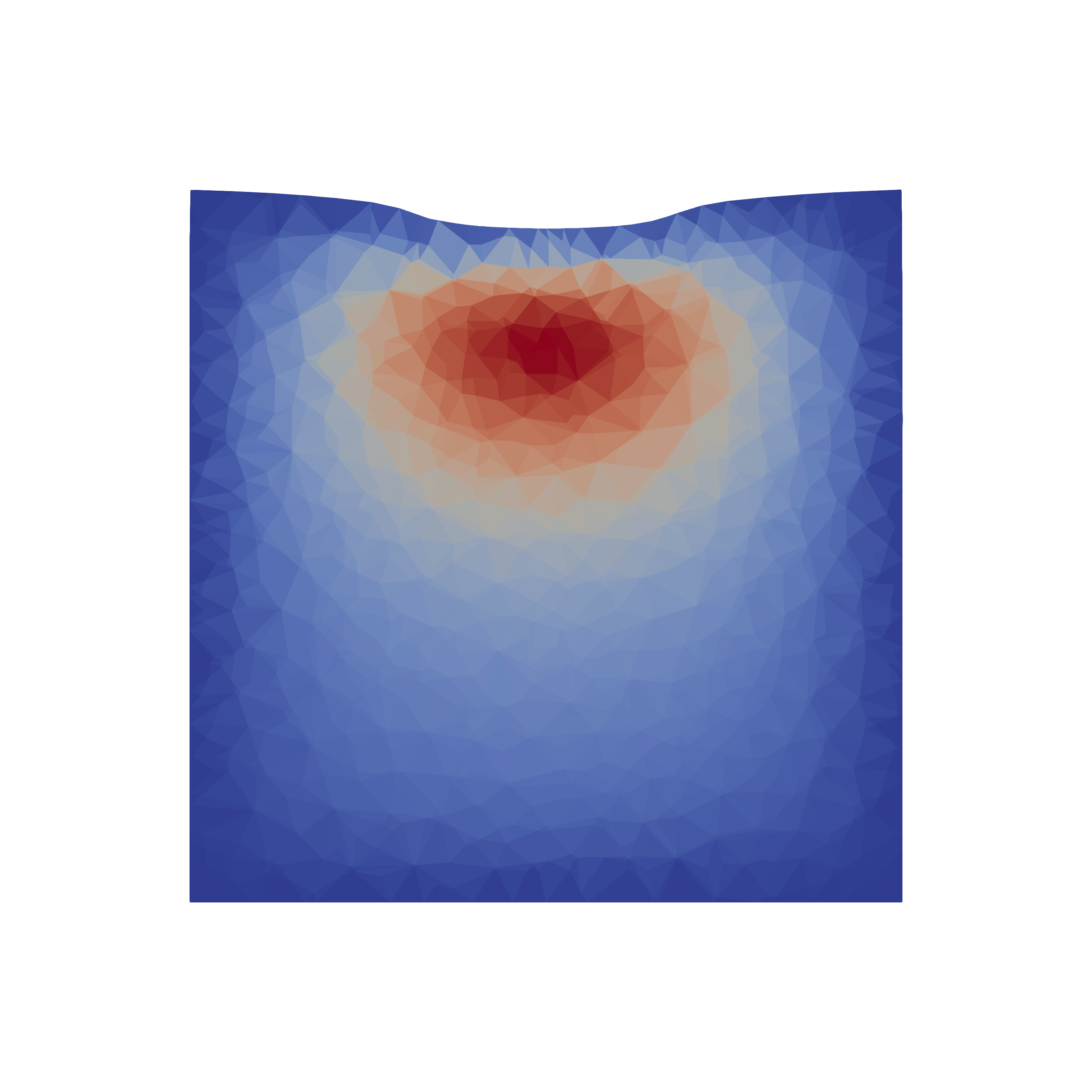}
        \includegraphics[width=0.3\textwidth]{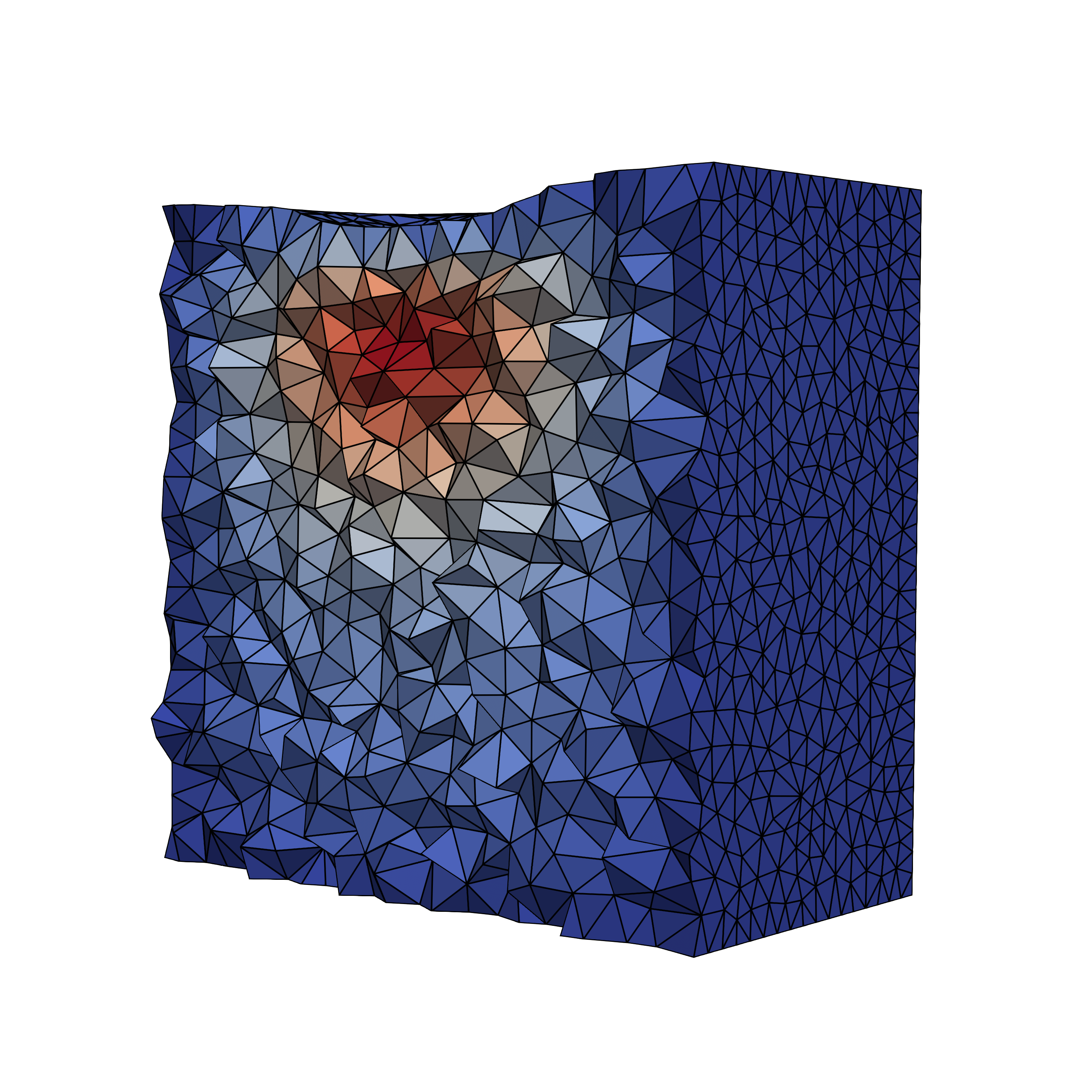}
        \includegraphics[width=0.14\textwidth]{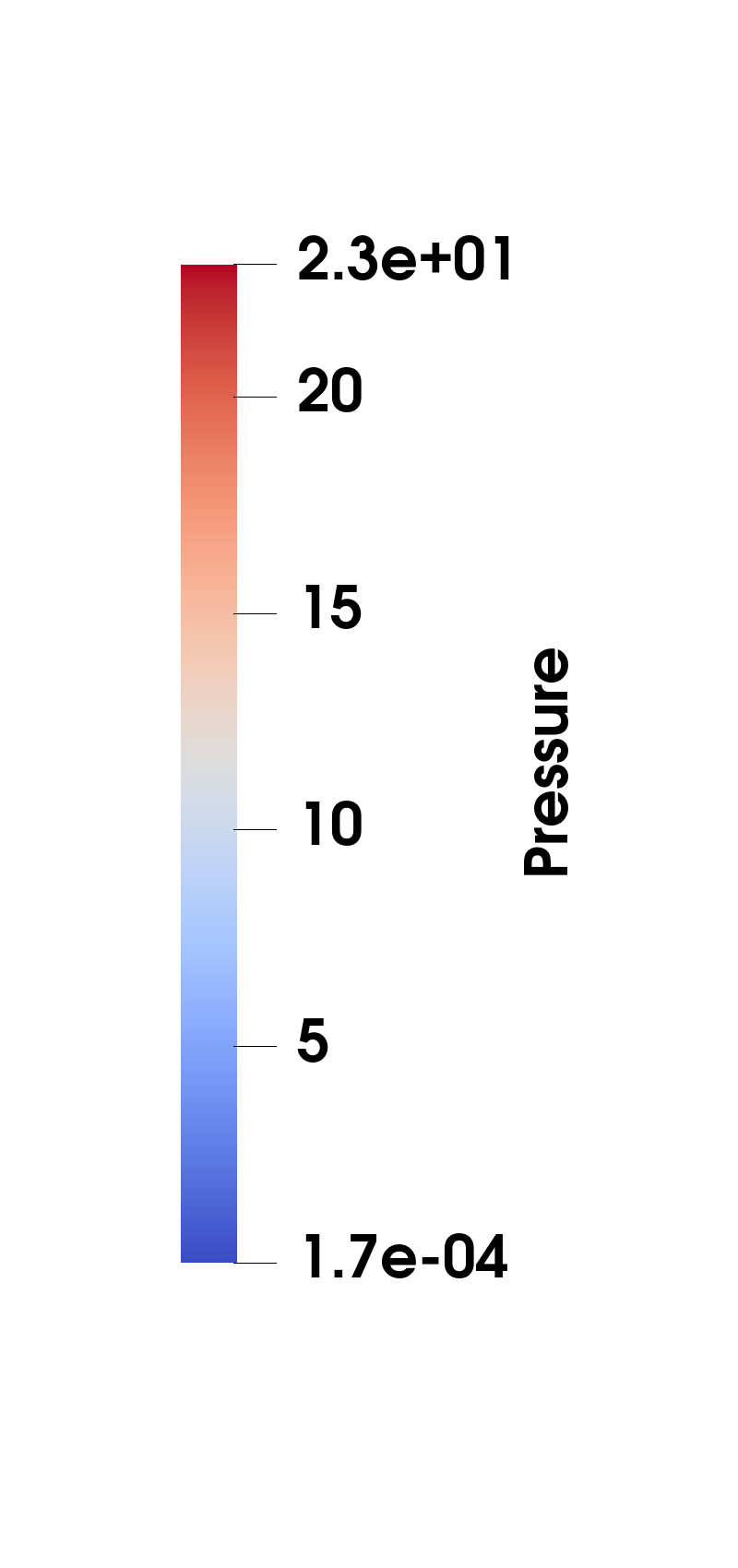}
    \caption{Pressure field (expressed in Pa) on the deformed domain at $t=0.2s$.}  \label{fig:footingProblem3D}
\end{figure}

In Figure~\ref{fig:footingProblem3D}, we report the pressure field over the deformed domain. We recall that the pressure is element-wise constant, while the displacement is a rigid body motion element-by-element. The deformed mesh is constructed by moving the vertices of the mesh according to the mean value of the displacement field of the elements that share these vertices. As expected the porous medium deforms under the effect of the boundary traction and an overpressure is observed below the area subject to it.

\section{Conclusions}
In this work, we have proposed a lowest-order virtual element method for the fully mixed formulation of Biot's problem. The flexibility of the VEM has allowed us to enforce the symmetry of the stress tensor directly in the discrete space without resorting to techniques such as Lagrange multipliers and therefore without further increasing the method's complexity. At the same time, the VEM allows us to employ general polytopal meshes, which could be particularly useful, e.g., in the discretization of the subsurface, in particular close to intersecting fractures, faults, and thin sedimentary layers that become constraints in the meshing process.

The theoretical results in terms of convergence and stability of the method have been confirmed by numerical tests: first order convergence is observed asymptotically for a variety of mesh types, including general Voronoi meshes with small faces and short edges. Moreover, these results do not degrade in the challenging case of an incompressible material and fluid.   
To fully exploit the geometrical flexibility of the VEM, a possible future development could be to consider the inclusion of fractures or faults in the poromechanical model: beyond introducing geometrical constraints, these features can be modeled as lower dimensional domains for the fluid problem, resulting in a hybrid-dimensional model for poroelasticity in the fractures and surrounding bulk domain; moreover, frictional contact on fracture faces can be considered, leading to a nonlinear mechanical problem.

\paragraph*{Acknowledgments}
AS and MV have been partially funded by MUR (PRIN2022 research grant n. 2022MBY5JM).
MB, DP, and MV kindly acknowledge partial financial support by INdAM-GNCS project 2024 CUP\_E53C23001670001. All the authors are also members of the Gruppo Nazionale Calcolo Scientifico-Istituto Nazionale di Alta Matematica (GNCS-INdAM).
	
\bibliographystyle{plain} 
\bibliography{bibliography}

\begin{thebibliography}{10}

\bibitem{adler}
J.~H. Adler, F.~J. Gaspar, X.~Hu, P.~Ohm, C.~Rodrigo, and L.~T. Zikatanov.
\newblock Robust preconditioners for a new stabilized discretization of the
  poroelastic equations.
\newblock {\em SIAM J. Sci. Comput.}, 42, 2020.

\bibitem{projectors}
B.~Ahmad, A.~Alsaedi, F.~Brezzi, L.~D. Marini, and A.~Russo.
\newblock Equivalent projectors for virtual element methods.
\newblock {\em Comput. Math. Appl.}, 66(3):376--391, 2013.

\bibitem{Ahmed2019}
E.~Ahmed, F.~A. Radu, and J.~M. Nordbotten.
\newblock Adaptive poromechanics computations based on a posteriori error
  estimates for fully mixed formulations of {B}iot’s consolidation model.
\newblock {\em Comput. Methods Appl. Mech. Eng.}, 347:264--294, 2019.

\bibitem{afw}
D.~N. Arnold, R.~S. Falk, and R.~Winther.
\newblock Mixed finite element methods for linear elasticity with weakly
  imposed symmetry.
\newblock {\em Math. Comput.}, 76(260), 2007.

\bibitem{aw}
D.~N. Arnold and R.~Winther.
\newblock Mixed finite elements for elasticity.
\newblock {\em Numer. Math.}, 92:401--419, 2002.

\bibitem{Artioli-DeMiranda-Lovadina-Patruno:2017}
E.~Artioli, S.~De~Miranda, C.~Lovadina, and L.~Patruno.
\newblock A stress/displacement virtual element method for plane elasticity
  problems.
\newblock {\em Comput. Methods Appl. Mech. Engrg.}, 325:155--174, 2017.

\bibitem{Artioli-DeMiranda-Lovadina-Patruno:2018}
E.~Artioli, S.~De~Miranda, C.~Lovadina, and L.~Patruno.
\newblock A family of virtual element methods for plane elasticity problems
  based on the {H}ellinger--{R}eissner principle.
\newblock {\em Comput. Methods Appl. Mech. Engrg.}, 340:978--999, 2018.

\bibitem{BLRXX}
L.~Beir{\~a}o~da Veiga, C.~Lovadina, and A.~Russo.
\newblock Stability analysis for the virtual element method.
\newblock {\em Math. Models Methods Appl. Sci.}, 27(13):2557--2594, 2017.

\bibitem{boffi2013}
D.~Boffi, F.~Brezzi, and M.~Fortin.
\newblock {\em Mixed Finite Element Methods and Applications}, volume~44.
\newblock Springer Berlin, Heidelberge, 2013.

\bibitem{BORIO2021113917}
A.~Borio, F.~P. Hamon, N.~Castelletto, J.~A. White, and R.~R. Settgast.
\newblock Hybrid mimetic finite-difference and virtual element formulation for
  coupled poromechanics.
\newblock {\em Comput. Methods Appl. Mech. Eng.}, 383:113917, 2021.

\bibitem{Botti-DiPietro-LeMaitre-Sochala_2019}
M.~Botti, D.~A. {Di Pietro}, O.~{Le Maître}, and P.~Sochala.
\newblock Numerical approximation of poroelasticity with random coefficients
  using polynomial chaos and hybrid high-order methods.
\newblock {\em Comput. Methods Appl. Mech. Eng.}, 361:112736, 2020.

\bibitem{Visinoni:25}
M.~Botti, L.~Mascotto, G.~Vacca, and M.~Visinoni.
\newblock Stability and interpolation estimates of {H}ellinger–{R}eissner
  virtual element spaces.
\newblock Submitted, 2 2025.

\bibitem{BrennerSungSmallEdges}
S.~C. Brenner and L.~Y. Sung.
\newblock Virtual element methods on meshes with small edges or faces.
\newblock {\em Math. Models Methods Appl. Sci.}, 28(7):1291--1336, 2018.

\bibitem{Brezzi-Falk-Marini:2014}
F.~Brezzi, R.~S. Falk, and L.~D. Marini.
\newblock Basic principles of mixed virtual element methods.
\newblock {\em ESAIM Math. Model. Numer. Anal.}, 48(4):1227--1240, 2014.

\bibitem{Burger2021}
R.~B\"urger, S.~Kumar, D.~Mora, R.~Ruiz-Baier, and N.~Verma.
\newblock Virtual element methods for the three--field formulation of
  time--dependent linear poroelasticity.
\newblock {\em Adv. Comput. Math.}, 47, 2021.

\bibitem{CaoChen2018}
S.~Cao and L.~Chen.
\newblock Anisotropic error estimates of the linear virtual element method on
  polygonal meshes.
\newblock {\em SIAM J. Numer. Anal.}, 56(5):2913--2939, 2018.

\bibitem{cheng2016}
A.~H.~D. Cheng.
\newblock {\em Poroelasticity}.
\newblock Springer Cham, 2016.

\bibitem{Chin-Dassi-Manzini-Sukumar:2024}
E.~B. Chin, F.~Dassi, G.~Manzini, and N.~Sukumar.
\newblock Numerical integration in the virtual element method with the scaled
  boundary cubature scheme.
\newblock {\em Int. J. Numer. Methods Eng.}, 125(20):e7549, 2024.

\bibitem{Coulet-Faille-Girault-Guy-Nataf:2020}
J.~Coulet, I.~Faille, V.~Girault, N.~Guy, and F.~Nataf.
\newblock A fully coupled scheme using virtual element method and finite volume
  for poroelasticity.
\newblock {\em Comput. Geosci.}, 24:381--403, 2020.

\bibitem{dassi2023vem}
F.~Dassi.
\newblock Vem++, a c++ library to handle and play with the {V}irtual {E}lement
  {M}ethod.
\newblock {\em Numerical Algorithms}, 2025.

\bibitem{Dassi-Lovadina-Visinoni:2020}
F.~Dassi, C.~Lovadina, and M.~Visinoni.
\newblock A three-dimensional {H}ellinger-{R}eissner virtual element method for
  linear elasticity problems.
\newblock {\em Comput. Methods Appl. Mech. Engrg.}, 364:112910, 2020.

\bibitem{Gaspar-Lisbona-Oosterlee-Vabishchevich_2007}
F.~J. Gaspar, F.~J. Lisbona, C.~W. Oosterlee, and P.~N. Vabishchevich.
\newblock An efficient multigrid solver for a reformulated version of the
  poroelasticity system.
\newblock {\em Comput. Methods Appl. Mech. Eng.}, 196(8):1447--1457, 2007.

\bibitem{grytz2012}
R.~Grytz, I.~A. Sigal, J.~W. Ruberti, G.~Meschke, and J.~C. Downs.
\newblock Lamina cribrosa thickening in early glaucoma predicted by a
  microstructure motivated growth and remodeling approach.
\newblock {\em Mech. Mater.}, 2012.

\bibitem{HU2017143}
X.~Hu, C.~Rodrigo, F.~J. Gaspar, and L.~T. Zikatanov.
\newblock A nonconforming finite element method for the biot’s consolidation
  model in poroelasticity.
\newblock {\em J. Comput. Appl. Math.}, 310:143--154, 2017.

\bibitem{Korsawe2005}
J.~Korsawe and G.~Starke.
\newblock A least-squares mixed finite element method for {B}iot's
  consolidation problem in porous media.
\newblock {\em SIAM J. Numer. Anal.}, 43(1):318--339, 2005.

\bibitem{kreuzer2024}
C.~Kreuzer and P.~Zanotti.
\newblock Inf-sup stable discretization of the quasi-static {Biot's} equations
  in poroelasticity.
\newblock \url{https://arxiv.org/abs/2407.02939}, 2024.

\bibitem{Kumar2020}
S.~Kumar, R.~Oyarz\'ua, R.~Ruiz-Baier, and R.~Sandilya.
\newblock Conservative discontinuous finite volume and mixed schemes for a new
  four-field formulation in poroelasticity.
\newblock {\em ESAIM: M2AN}, 54(1):273--299, 2020.

\bibitem{lee}
J.~J. Lee.
\newblock Robust error analysis of coupled mixed methods for {B}iot’s
  consolidation model.
\newblock {\em J. Sci. Comput.}, 69:610--632, 2016.

\bibitem{Mousavi-Sukumar:2011}
S.~E. Mousavi and N.~Sukumar.
\newblock Numerical integration of polynomials and discontinuous functions on
  irregular convex polygons and polyhedrons.
\newblock {\em Comput. Mech.}, 47:535--554, 2011.

\bibitem{Murad-Loula_1994}
M.~A. Murad and A.~F.~D. Loula.
\newblock On stability and convergence of finite element approximations of
  {B}iot's consolidation problem.
\newblock {\em Int. J. Numer. Methods Eng.}, 37(4):645--667, 1994.

\bibitem{Oyarzua2016}
R.~Oyarz\'{u}a and R.~Ruiz-Baier.
\newblock Locking-free finite element methods for poroelasticity.
\newblock {\em SIAM J. Numer. Anal.}, 54(5):2951--2973, 2016.

\bibitem{rodrigo}
C.~Rodrigo, X.~Hu, P.~Ohm, J.~H. Adler, F.J. Gaspar, and L.T. Zikatanov.
\newblock New stabilized discretizations for poroelasticity and the stokes’
  equations.
\newblock {\em Comput. Methods Appl. Mech. Eng.}, 341, 2018.

\bibitem{Voro++}
C.~H. Rycroft.
\newblock {{VORO}++: A three-dimensional Voronoi cell library in C++}.
\newblock {\em Chaos}, 19:041111, 2009.

\bibitem{tetgen}
H.~Si.
\newblock {T}et{G}en, a {D}elaunay-{B}ased {Q}uality {T}etrahedral {M}esh
  {G}enerator.
\newblock {\em ACM Trans. Math. Softw.}, 41(2):1--36, February 2015.

\bibitem{Sommariva-Vianello:2024}
A.~Sommariva and M.~Vianello.
\newblock Tetrafreeq: Tetrahedra-free quadrature on polyhedral elements.
\newblock {\em Appl. Numer. Math.}, 200:389--398, 2024.

\bibitem{SREEKUMAR2021113543}
A.~Sreekumar, S.~P. Triantafyllou, F.-X. Bécot, and F.~Chevillotte.
\newblock Multiscale vem for the biot consolidation analysis of complex and
  highly heterogeneous domains.
\newblock {\em Comput. Methods Appl. Mech. Eng.}, 375:113543, 2021.

\bibitem{Tang2021}
X.~Tang, Z.~Liu, B.~Zhang, and M.~Feng.
\newblock On the locking-free three-field virtual element methods for {B}iot's
  consolidation model in poroelasticity.
\newblock {\em ESAIM: M2AN}, 55:S909--S939, 2021.

\bibitem{Tchonkova2008}
M.~Tchonkova, J.~Peters, and S.~Sture.
\newblock A new mixed finite element method for poro-elasticity.
\newblock {\em Int. J. Numer. Anal. Methods Geomech.}, 32(6):579--606, 2008.

\bibitem{Visinoni:2024}
M.~Visinoni.
\newblock A family of three-dimensional virtual elements for
  {H}ellinger-{R}eissner elasticity problems.
\newblock {\em Comput. Math. Appl.}, 155:97--109, 2024.

\bibitem{Yi2014}
S.-Y. Yi.
\newblock Convergence analysis of a new mixed finite element method for
  {B}iot's consolidation model.
\newblock {\em Numer. Methods Partial Differ. Equ.}, 30(4):1189--1210, 2014.

\bibitem{Zhao2022}
L.~Zhao, E.~Chung, and E.~J. Park.
\newblock {A locking-free staggered DG method for the Biot system of
  poroelasticity on general polygonal meshes}.
\newblock {\em IMA J. Numer. Anal.}, 43(5):2777--2816, 09 2022.

\end{thebibliography}
\end{document}